\definecolor{mygreen}{HTML}{43a047}
\newcommand{\betaa}{\beta_{\textup{a}}}
\newcommand{\Gronwall}{Gr\"onwall}
\newcommand{\calT}{{\mathcal{T}}}
\newcommand{\calG}{\mathcal{G}}
\newcommand{\frakG}{\mathfrak{G}}
\newcommand{\ulb}{\underline{b}}
\newcommand{\olb}{\overline{b}}
\newcommand{\ulq}{\underline{q}}
\newcommand{\olq}{\overline{q}}
\newcommand{\op}{\overline{p}}
\newcommand{\opt}{\overline{p}_t}
\newcommand{\opstar}{\overline{p}_*}
\newcommand{\opstart}{\overline{p}_{*t}}
\newcommand{\otheta}{\overline{\theta}}
\newcommand{\othetat}{\overline{\theta}_t}
\newcommand{\Om}{\Omega}
\newcommand{\D}{\Delta}
\newcommand{\pn}{p^n}
\newcommand{\rhob}{\rho_{\textup{b}}}
\newcommand{\rhoa}{\rho_{\textup{a}}}
\newcommand{\Ca}{C_{\textup{a}}}
\newcommand{\Cb}{C_{\textup{b}}}
\newcommand{\wb}{\omega_{\textup{b}}}
\newcommand{\kappaa}{\kappa_{\textup{a}}}
\newcommand{\Thetaa}{\Theta_{\textup{a}}}
\newcommand{\ftheta}{f_\theta}
\newcommand{\fp}{f_p}
\newcommand{\fc}{f_c}
\newcommand{\pstar}{p_\ast}
\newcommand{\thetastar}{\theta_\ast}
\def\Rp{R_p}
\def\Rtheta{R_\theta}
\def\Xtheta{\mathcal{X}_\theta}
\newcommand{\pt}{p_t}
\newcommand{\ptt}{p_{tt}}
\newcommand{\ddt}{\frac{\textup{d}}{\textup{d}t}}
\newcommand{\dt}{\, \textup{d} t}
\newcommand{\ds}{\, \textup{d} s }
\newcommand{\dx}{\, \textup{d} x}
\newcommand{\dxs}{\, \textup{d}x\textup{d}s}
\newcommand{\intTO}{\int_0^T \int_{\Omega}}
\newcommand{\inttO}{\int_0^t \int_{\Omega}}
\newcommand{\intt}{\int_0^t}
\newcommand{\intT}{\int_0^T}
\newcommand{\intO}{\int_{\Omega}}
\newcommand{\nLtwo}[1]{\|#1\|_{L^2(\Omega)}}
\newcommand{\change}[1]{{\textcolor{black}{#1}}}
\newcommand{\R}{\mathbb{R}} 
\newcommand{\N}{\mathbb{N}} 
\newcommand{\Htwo}{H^2(\Omega)}
\newcommand{\Ltwo}{L^2(\Omega)}
\newcommand{\Linf}{L^\infty(\Omega)}
\newcommand{\Hneg}{H^{-1}(\Omega)}
\newcommand{\Lfour}{L^4(\Omega)}
\newcommand{\Hone}{H^1(\Omega)}
\newcommand{\Honezero}{H_0^1(\Omega)}
\newcommand{\Hthree}{H^3(\Omega)}
\def\LtwoLsix{L^2(\Lsix)}
\newcommand{\Honetwo}{{H^2(\Omega)\cap H_0^1(\Omega)}}
\newcommand{\Honethree}{{H_\diamondsuit^3(\Omega)}}
\def\LinfLtwo{L^\infty(L^2(\Omega))}
\def\LtwoLfour{L^2(L^4(\Omega))}
\def\Lthree{L^3(\Omega)}
\def\LinfLfour{L^\infty(L^4(\Omega))}
\def\LonetLtwo{L^1_t(\Ltwo)}
\def\Linf{L^\infty(\Omega)}
\def\LinftLfour{L^\infty_t(\Lfour)}
\def\LtwotLfour{L^2_t(\Lfour)}
\def\LinftHone{L^\infty_t(\Hone)}
\newcommand{\Xp}{\mathcal{X}_p}
\newcommand{\Yp}{\mathcal{X}^0_p}
\newcommand{\Xc}{\mathcal{X}_c}
\newcommand{\pstart}{p_{*t}}
\newcommand{\Ctheta}{C_{\theta}}
\newtheorem{theorem}{Theorem}
\newtheorem{lemma}{Lemma}
\newtheorem{proposition}{Proposition}
\newtheorem{assumption}{Assumption}
\newtheorem{definition}{Definition}
\newtheorem{corollary}{Corollary}
\numberwithin{lemma}{section}
\numberwithin{proposition}{section}
\numberwithin{theorem}{section}
\numberwithin{equation}{section}
\newcommand{\leqnomode}{\tagsleft@true}
\newcommand{\reqnomode}{\tagsleft@false}
\definecolor{grey}{rgb}{0.5,0.5,0.5}
\definecolor{darkgreen}{rgb}{0,0.5,0}
\def\Honez{\Honezero}
\newcommand{\LinfTLinf}{L^\infty(0,T; \Linf)}
\def\LinfLinf{L^\infty(\Linf)}
\def\LtwoHone{L^2(\Hone)}
\def\LtwoTHonez{L^2(0,T;\Honezero)}
\def\LinfHone{L^\infty(\Hone)}
\def\LtwoLtwo{L^2(L^2(\Omega))}
\def\LtwoTLtwo{L^2(0,T; L^2(\Omega))}
\def\LoneTLtwo{L^1(0,T;\Ltwo)}
\def\LoneLtwo{L^1(\Ltwo)}
\def\LtwoHtwo{L^2(\Htwo)}
\def\LtwoLfour{L^2(\Lfour)}
\def\LinftLtwo{L^\infty_t(\Ltwo)}
\def\LtwotHone{L^2_t(\Hone)}
\def\LinfHtwo{L^\infty(\Htwo)}
\def\LinfTLinf{L^\infty(0,T; \Linf)}
\def\LinfTLtwo{L^\infty(0,T; \Ltwo)}
\def\LoneLtwo{L^1(\Ltwo)}
\def\LoneTHneg{L^1(0,T; \Hneg)}
\def\LoneLinf{L^1(\Linf)}
\def\LtwotLtwo{L^2_t(\Ltwo)}
\def\Lsix{L^6(\Omega)}
\def\Lthree{L^3(\Omega)}
\def\LtwoLthree{L^2(\Lthree)}
\def\LinfLthree{L^\infty(\Lthree)}
\def\LinfLsix{L^\infty(\Lsix)}
\def\LinfTHonetwo{L^\infty(0,T; \Honetwo)}
\def\LinfHthree{L^\infty(\Hthree)}
\def\LinfTHonethree{L^\infty(0,T; \Honethree)}
\def\HoneTHonezero{H^1(0,T; \Honezero)}
\def\HoneHone{H^1(\Hone)}
\def\LtwoTHonetwo{L^2(0,T; \Honetwo)}
\def\Hfour{H^4(\Omega)}
\def\HoneHtwo{H^1(\Htwo)}
\def\HoneLtwo{H^1(\Ltwo)}
\def\HoneTHtwo{H^1(0,T; \Htwo)}
\def\pzero{p_0}
\def\deltap{\delta_p}
\def\deltatheta{\delta_{\theta}}
\def\thetazero{\theta_0}
\def\bfv{\boldsymbol{v}}
\def\CL{C_{\textup{L}}}
\def\eps{\varepsilon}
\def\thetat{\theta_t}
\def\ball{B_{\Rp, \Rtheta}}
\def\thetaone{\theta^{(1)}}
\def\thetatwo{\theta^{(2)}}
\def\pone{p^{(1)}}
\def\ptwo{p^{(2)}}
\def\thetaonestar{\theta_*^{(1)}}
\def\thetatwostar{\theta_*^{(2)}}
\def\ponestar{p_*^{(1)}}
\def\ptwostar{p_*^{(2)}}
\def\ponestart{p_{*t}^{(1)}}
\def\bRtheta{\bar{R}_\theta}
\def\bRp{\bar{R}_p}
\def\ponestart{p^{(1)}_{*t}}
\def\ptwostart{p^{(2)}_{*t}}
\def\otheta{\overline{\theta}}
\def\othetastar{\overline{\theta}_*}
\def\qstar{q_*}
\def\kstar{k_*}
\def\bstar{b_*}
\def\qstart{q_{*t}}
\def\wbstar{\omega_{\textup{b}, *}}
\def\wbstart{\omega_{\textup{b}, *t}}
\def\thetastart{\theta_{*t}}
\def\konestar{\kstar^{(1)}}
\def\ktwostar{\kstar^{(2)}}
\def\bonestar{\bstar^{(1)}}
\def\btwostar{\bstar^{(2)}}
\def\qonestar{\qstar^{(1)}}
\def\qtwostar{\qstar^{(2)}}
\def\wbonestar{\wbstar^{(1)}}
\def\wbtwostar{\wbstar^{(2)}}
\def\ptwostartt{p^{(2)}_{*tt}}
\def\Cp{C_p}
\def\Ctheta{C_\theta}
\def\solspaceWestPennes{\boldsymbol{X}_{p, \theta}}
\def\fc{f_c}
\def\ct{c_t}
\def\calM{\mathcal{M}}
\def\psionestar{\psi_*^{(1)}}
\def\psitwostar{\psi_*^{(2)}}
\def\solspaceWestPennesconcentr{\boldsymbol{X}_{p, \theta, c}}
\def\czero{c_0}
\def\Xc{\mathcal{X}_c}
\def\calD{\mathcal{D}}
\def\Czerooneloc{C^{0,1}_{\textup{loc}}}
\def\Ctwooneloc{C^{2,1}_{\textup{loc}}}
\def\Cthreeoneloc{C^{3,1}_{\textup{loc}}}
\def\frakK{\mathfrak{K}}
\def\Dtalpha{\textup{D}_t^\alpha}
\def\frakm{\mathfrak{m}}
\def\frakn{\mathfrak{n}}
\def\frakl{\mathfrak{l}}
\def\ulfrakm{\underline{\frakm}}
\def\olfrakm{\overline{\frakm}}
\def\ulfrakn{\underline{\frakn}}
\def\olfrakn{\overline{\frakn}}
\def\CfrakK{C_{\frakK}}
\def\fpt{f_{p,t}}
\def\frakmt{\frakm_t}
\def\Xfrakm{X_\frakm}
\def\Xfrakn{X_\frakn}
\def\Xfrakl{X_\frakl}
\def\calL{\mathcal{L}}
\def\ft{f_t}
\def\fraknt{\frakn_t}
\def\frakmone{\frakm^{(1)}}
\def\frakmtwo{\frakm^{(2)}}
\def\fraknone{\frakn^{(1)}}
\def\frakntwo{\frakn^{(2)}}
\def\fraklone{\frakl^{(1)}}
\def\frakltwo{\frakl^{(2)}}
\def\optt{\op_{tt}}
\def\Yp{\mathcal{Y}_p}
\def\Ytheta{\mathcal{Y}_\theta}
\newcommand{\bfmu}{\boldsymbol{\mu}}
\newcommand{\bfxi}{\boldsymbol{\xi}}
\def\pnt{p^n_t}
\def\pntt{p^n_{tt}}
\def\Mfrakm{\mathbb{M}_\frakm}
\def\Lfrakl{\mathbb{L}_\frakl}
\def\Nfrakn{\mathbb{N}_\frakn}
\def\Mfrakmij{\mathbb{M}_{\frakm, ij}}
\def\Nfraknij{\mathbb{N}_{\frakn, ij}}
\def\Lfraklij{\mathbb{L}_{\frakl, ij}}
\def\ph{p_h}
\def\pht{p_{h,t}}
\def\frakf{\mathfrak{f}}       
\title[Westervelt-based modeling of ultrasound-enhanced drug delivery]{Westervelt-based modeling of ultrasound-enhanced drug delivery}  
\subjclass[2020]{35L70, 35R11, 35M30, 65M22}         
\keywords{Westervelt's equation, Pennes equation, nonlocal attenuation, multiphysics systems, local well-posedness, ultrasound-enhanced drug delivery}   
\author{Julio Careaga$^\dagger$}
\thanks{$^\dagger$Department of Mathematics, University of B\'io-B\'io, Avenida Collao 1202, Casilla 5-C, Concepci\'on, Chile  (\href{jcareaga@ubiobio.cl}{jcareaga@ubiobio.cl})}
\author{Vanja Nikoli\'c$^\ddag$}
\thanks{$^\ddag$Department of Mathematics,
	Radboud University, 
	Heyendaalseweg 135, 
	6525 AJ Nijmegen, The Netherlands (\href{vanja.nikolic@ru.nl}{vanja.nikolic@ru.nl})}
\author{Belkacem Said-Houari$^\S$}
\thanks{$^S$Department of Mathematics, College of Sciences, University of
	Sharjah, P. O. Box: 27272, Sharjah, United Arab Emirates (\href{bhouari@sharjah.ac.ae}{bhouari@sharjah.ac.ae})}
\begin{document}
	\vspace*{8mm}
	\begin{abstract}
	We investigate a nonlinear multiphysics model motivated by ultrasound-enhanced drug delivery.  The acoustic pressure field is modeled by Westervelt's quasilinear wave equation to adequately capture the nonlinear effects in ultrasound propagation. The nonlocal attenuation characteristic for soft biological media is modeled by acoustic damping of the time-fractional type.  Additionally, acoustic medium parameters are allowed to depend on the temperature of the medium. The wave equation is coupled to the nonlinear Pennes heat equation with a pressure-dependent source to account for ultrasound waves heating up the tissue.  Finally, the drug concentration is obtained as the solution to an advection-diffusion equation with a pressure-dependent velocity. Toward gaining a rigorous understanding of this system, we set up a fixed-point argument in the analysis combined with devising energy estimates that can accommodate the time-fractional damping. The energy arguments are, in part, carried out by employing time-weighted test functions to reduce the regularity assumptions on the initial temperature. The analysis reveals that different smoothness of the initial pressure, temperature, and concentration fields is needed as well as smallness of the pressure-temperature data in order to ensure  non-degeneracy of the system and establish well-posedness. 
	  Our theoretical considerations are complemented by a numerical investigation of the system under more realistic boundary conditions.  The numerical experiments, performed in different computational scenarios, underline the importance of considering nonlinear effects when modeling ultrasound-targeted drug delivery. 	\end{abstract}   
	\vspace*{-7mm}  
	\maketitle             
	\section{Introduction}
Over the last years, the potential for using ultrasound waves as a way of improving drug delivery has gained recognition in various medical treatments.   In particular, High-Intensity Focused Ultrasound (HIFU) waves have found important applications in enhancing tumor therapy, treatments of central nervous diseases, and cardiovascular issues due to their non-invasive nature and the ability to target a particular area of interest in the body; see~\cite{mcclureUsingHighIntensityFocused2016, phenixHighIntensityFocused2014, delaneyMakingWavesHow, pittUltrasonicDrugDelivery2004, pitt2004ultrasonic}.  The transport of drugs is enhanced through the oscillating motion of the medium under the influence of ultrasound~\cite{pitt2004ultrasonic}, which influences the speed of drug delivery. \\
	\indent In this work, we investigate a mathematical model that captures several prominent effects of HIFU-enhanced drug delivery. At high intensities,  ultrasonic waves exhibit nonlinear effects, including wavefront steepening and, related to it, generation of higher harmonics; see, e.g.,~\cite{hamiltonNonlinearAcoustics1998}. Secondly, acoustic waves that propagate through complex media such as soft biological tissue are known to follow nonlocal attenuation patterns of time-fractional type; see, e.g.~\cite{prieurNonlinearAcousticWave2011, holmWavesPowerlawAttenuation2019}. Thirdly, the acoustic medium parameters in general depend on the temperature of the medium. These effects in ultrasound propagation can be modeled by Westervelt's wave equation, which is given in terms of the acoustic pressure $p=p(x,t)$ by
	\begin{equation}\label{Westervelt_eq}
		p_{tt}-q(\Theta)\Delta p - b(\Theta) \frakK *\Delta \pt = k(\Theta)\left(p^2\right)_{tt}.
	\end{equation}
The nonlocal attenuation is captured by the term $- b(\Theta) \frakK *\Delta \pt$,	where $*$ is the Laplace convolution operator:
	\begin{equation}
		(\frakK* v)(t) = \int_0^t \frakK(t-s) v(s) \ds
	\end{equation}
and $\frakK$ the memory kernel. In the analysis we impose relatively non-restrictive conditions on the memory kernel $\frakK$ which allow us to cover, among others, the practically most relevant Abel kernel:
\begin{equation} \label{Abel kernel}
	\frakK(t) = \frac{1}{\Gamma(1-\alpha)}t^{-\alpha}\qquad \text{for }\alpha \in (0,1),
\end{equation} 
where $\Gamma(\cdot)$ is the Gamma function. This choice leads to the Westervelt equation with the damping involving the Caputo--Djrbashian derivative $\Dtalpha(\cdot)$ of order $\alpha \in (0,1)$:
\begin{subequations}\label{Main_system}
\begin{equation}\label{Westervelt_eq Caputo}
p_{tt}-q(\Theta)\Delta p - b(\Theta) \Delta \Dtalpha p = k(\Theta)\left(p^2\right)_{tt}.
\end{equation}
For the definition of $\Dtalpha(\cdot)$, we refer to \eqref{def Caputo derivative} below. The non-isothermal setting is reflected by having temperature-dependent acoustic medium parameters in \eqref{Westervelt_eq}; namely, the speed of sound squared $q=q(\Theta)$, damping coefficient $b=b(\Theta)$, and nonlinearity coefficient $k=k(\Theta)$.  Possible forms and the assumptions made on these functions of temperature in the analysis are discussed in Section~\ref{Sec:Preliminaries}. We note that in particular allowing for the sound speed to depend on the temperature is important in modeling therapeutic HIFU applications as the changes in the propagation speed of pressure waves with the changed temperature may lead to the shifting of the focal area; that is, the region with the peak pressure values. This effect is known as \emph{thermal lensing}; see, e.g.~\cite{connorBioacousticThermalLensing2002}.  \\
	\indent  At high intensities, focused ultrasound waves generate heat. This heat transfer is commonly modeled using the Pennes equation \cite{pennesAnalysisTissueArterial1948} for the temperature $\Theta=\Theta(x,t)$:
		\begin{equation} \label{Pennes_eq notscaled}
	\rhoa \Ca\Theta_t -\kappaa\Delta \Theta+ \wb(\Theta) \rhob \Cb (\Theta-\Thetaa) = \tilde{\calG}(\pt, \Theta)+f_\Theta. 
\end{equation}
In the above equation, the medium parameters $\rhoa, \Ca$ and $\kappaa$ stand, respectively, for the ambient density, the ambient heat capacity, and thermal conductivity of the tissue. The additional term $\wb(\Theta) \rhob \Cb (\Theta-\Thetaa)$ accounts for the heat loss due to blood circulation, with  $\rhob$ and $\Cb$ being the density and specific heat capacity of the blood, respectively. Further, $\wb$ is the mass flow rate of blood per unit volume of tissue. This blood perfusion is known to depend on the temperature in practice and its qualitative behavior is influenced by the tissue being healthy or cancerous; see,  e.g.,~\cite{dengParametricStudiesPhase2000, langImpactNonlinearHeat1999, kimNonlinearFiniteelementAnalysis1996, songImplicationBloodFlow1984, tompkinsnTemperaturedependentConstantrateBlood1994}. The dependence of $\wb$ on the temperature $\Theta$ is outlined in Section \ref{Section_Assumptions}.
	The source term in the Pennes equation contains $\tilde{\calG}$ which models the energy absorbed by the tissue; we discuss its form in detail in Section~\ref{Sec:Preliminaries}. In the theory, we allow for an additional pressure-independent source of heat  $f_\Theta=f_\Theta(x,t)$.   \\
	\indent  Having modeled the wave-heat interaction, the ultrasound-enhanced drug transport is captured with a convection-diffusion equation for the drug concentration $c=c(x,t)$:
	\begin{equation} \label{concentration equation}  
		c_t + \nabla \cdot \big(\bfv(p, \nabla p) c\big)- \nabla \cdot \big(D(p, \Theta) \nabla c\big) = f_c
	\end{equation}
	\end{subequations}
with the source term $\fc=\fc(x,t)$. 
Equation \eqref{concentration equation} models physical phenomena where particles, energy, or other quantities   are transported within a system through both diffusion and convection processes.
	The second term  in \eqref{concentration equation} is a convection term, where
	$\bfv=\bfv(p, \nabla p)$ is the convective velocity that depends on the acoustic pressure and its gradient, and it measures the influence of the acoustic wave in drug transport.   
	The function  $D(p, \Theta)=D_0 + \calD(p,  \Theta)$ is split into a diffusion matrix $D_0$, modeled according to Fick's law, and a perturbation involving $\calD(p, \Theta)$
	which depends on both the acoustic pressure and the temperature.  	

\subsection*{Main contributions and relation to the existing literature}
	The main contributions of this work pertain to the rigorous study of the multiphysics system in \eqref{Main_system} and a further numerical investigation of its properties. \\
	\indent To the best of our knowledge, coupled systems involving wave, heat, and advection-diffusion problems based on nonlinear acoustic models of Westervelt type have not been mathematically investigated before, even in much simpler local-in-time  settings. In general, multiphysics problems involving time-fractional evolution are under-investigated in terms of rigorous analysis. We provide an overview below of helpful related works on the Westervelt equation and linear or local versions of the problem.
	\begin{itemize}[leftmargin=0.4cm]
		\item \emph{Westervelt's equation with strong damping}.
		Equation \eqref{Westervelt_eq} can be seen as a nonlocal isothermal version of the Westervelt equation with strong damping:
		\begin{equation}\label{Westervelt_eq_1}
p_{tt}-c^2\Delta p - b \Delta p_t = k\left(p^2\right)_{tt},
\end{equation}
which models the propagation of sound waves through thermoviscous fluids; see, e.g.,~\cite{kaltenbacherMathematicsNonlinearAcoustics2015}.  This equation has been studied analytically in various settings; we mention, e.g., the global well-posedness analysis   in~\cite{kaltenbacherGlobalExistenceExponential2009, meyerOptimalRegularityLongTime2011}.  More precisely, in~\cite{kaltenbacherGlobalExistenceExponential2009}, the authors established both local and global well-posedness, along with exponential decay rate, under Dirichlet boundary conditions, provided that the initial data are small and smooth enough. By employing the maximal $L^p$-regularity for abstract quasilinear parabolic equations, the authors in \cite{meyerOptimalRegularityLongTime2011} extended the result of \cite{kaltenbacherGlobalExistenceExponential2009}  to $L^p$-based Sobolev spaces thereby improving upon the regularity assumptions on data in \cite{kaltenbacherGlobalExistenceExponential2009}. \vspace*{1mm}
		\item \emph{Westervelt's equation with nonlocal damping}.  Well-posedness of the isothermal Westervelt equation with time-fractional attenuation given by
				\begin{equation}
			p_{tt}-c^2\Delta p - b \Delta \Dtalpha p = k\left(p^2\right)_{tt},
		\end{equation}
		 has been studied  in~\cite{kaltenbacherInverseProblemNonlinear2021, bakerNumericalAnalysisTimestepping2024} under homogeneous Dirichlet conditions, and, in the integro-differential framework that we adopt here with the memory kernel $\frakK$,  in~\cite{kaltenbacher2024limiting}, together with a study of the limiting behavior of solutions as $b \searrow 0 $.  \vspace*{1mm}
		\item \emph{The Westervelt--Pennes  system with strong acoustic damping}. Mathematical literature on local nonlinear wave-heat systems is quite rich, in particular, many results have been obtained for models arising in thermoelasticity; see the book~\cite{zheng2020nonlinear} and, e.g.,~\cite{rackeGlobalSolvabilityExponential1993, jiangEvolutionEquationsThermoelasticity2000}, and the references provided therein.  In the context of nonlinear thermo-acoustics, the results are more recent. The Westervelt--Pennes coupling with strong acoustic damping (that is, $\frakK=\delta_0$), which has a parabolic-parabolic nature, has been recently analyzed with homogeneous Dirichlet--Dirichlet conditions  in a simplified setting of the Pennes equation being linear in terms of the temperature (that is with $\wb=\,$const. and $\tilde{\calG}=\tilde{\calG}(p)$) in~\cite{nikolicLocalWellposednessCoupled2022, nikolicWesterveltPennesModel2022} using energy methods.  
		More precisely,  in \cite{nikolicLocalWellposednessCoupled2022} the authors proved a  local well-posedness result by  using the energy method together with a fixed point argument. In~\cite{nikolicWesterveltPennesModel2022} and  under a smallness assumption on the initial data, they  established the existence of a global-in-time solution  and proved an exponential decay of the solution.
		The  regularity assumptions in \cite{nikolicLocalWellposednessCoupled2022} have been  improved
		using an $L^p$- maximal regularity approach in~\cite{wilkeL_pL_qTheory2023}. Local well-posedness analysis of the system with mixed Neumann and nonlinear absorbing boundary   conditions for the pressure can be found in~\cite{nikolicSolvabilityWesterveltPennes2024}.  We also point out that the local Westervelt equation coupled with the hyperbolic Pennes equation resulting in what is known as the Westervelt--Cattaneo--Pennes system has been recently considered in~\cite{benabbasLocalWellposednessCoupled2024}, where a local well-posedness has been established and in \cite{benabbasGlobalExistenceAsymptotic2024} where the authors proved global existence and time-asymptotic behavior of the solution.  \vspace*{1mm}
				\item \emph{Linear and local wave-heat-concentration systems}. We mention also that a finite element analysis of linear versions of the system  with $k\equiv0$ in the wave equation and temperature-independent acoustic parameters as well as local-in-time acoustic damping can be found in \change{~\cite{ferreiraDrugDeliveryEnhanced2022}}. 
	\end{itemize}
	These available results however leave open the important questions of the influence of the fractional dissipation on the Westervelt--Pennes system as well as the properties of the concentration field, which are the main aims of the present work. In addition, we discuss the numerical approximation of this complex problem and illustrate the properties of this multiphysics system with numerical experiments. 
	
		\subsection*{Challenges and the present approach}
	The nonlinear and nonlocal multiphysics nature of the problem presents us with several challenges in the analysis. Observe, however, that the concentration $c$ does not appear in equations \eqref{Westervelt_eq} and \eqref{Pennes_eq notscaled},  and we can thus investigate the local-well posedness of \eqref{Westervelt_eq}--\eqref{Pennes_eq notscaled} separately first and then proceed to study the concentration field. \\
	\indent The analysis of the Westervelt--Pennes sub-system will be based on a fixed-point argument applied on a linearization of {\eqref{Westervelt_eq}--\eqref{Pennes_eq notscaled}}. Here deriving energy estimates (at first in a Galerkin semi-discrete  setting) for a suitable linearization of the system is crucial but highly non-trivial. In the testing procedure, we naturally need to exploit the coercivity property of the memory term. That is, having at our disposal an estimate of the form
	\begin{equation}
		\begin{aligned}
			\int_0^{t} \intO \left(\frakK* y \right)(s) \,y(s)\dxs\geq 	\CfrakK
			\int_0^{t} \|(\frakK* y)(s)\|^2_{\Ltwo} \ds,  \quad y\in L^2(0, T; \Ltwo)
		\end{aligned}
	\end{equation}
	is one of the key tools. However, this coercivity is difficult to achieve for kernels of practical interest with a space-time dependent coefficient $b=b(\Theta)$ in front of $\frakK*\Delta \pt$. For this reason, in the analysis we rewrite the Westervelt equation in the following form: 
	\begin{equation}
		\frac{1}{b(\Theta)}	p_{tt}-\frac{q(\Theta)}{b(\Theta)}\Delta p -  \frakK*\Delta  \pt = \frac{k(\Theta)}{b(\Theta)}\left(p^2\right)_{tt}+\fp,
	\end{equation}    
	where for the sake of mathematical generalization we also allow for the source term $\fp=\fp(x,t)$. Note that this equation can be further rewritten using $(p^2)_{tt}= 2p \ptt+2 \pt^2$ as
	\begin{equation}
		\begin{aligned}
			\frac{1}{b(\Theta)}	(1- 2 k(\Theta) p)p_{tt}-\frac{q(\Theta)}{b(\Theta)}\Delta p -  \frakK*\Delta  \pt = 2\frac{k(\Theta)}{b(\Theta)} \pt^2+\fp.
		\end{aligned}
	\end{equation}
	Looking at this form, we see that in the analysis one has to prove non-degeneracy (positivity) of the following terms: $b(\Theta)$, $1- 2k(\Theta) p$, and $q(\Theta)$.  These will be assumed to be positive in the linearized problem and then proven for the solution in the fixed-point argument using smallness of data and sufficient smoothness of the pressure and temperature fields.
	\\
	\indent  In deriving energy estimates for a linearized nonlocal Westervelt--Pennes system, we have to address several additional difficulties. First, the linearized system is decoupled and nonlocal in time. Therefore, we cannot take advantage of the dissipation coming from the parabolic Pennes equation, and, at the same time, the available test functions for the acoustic equations are restricted by the coercivity property of the memory kernel. Furthermore, as the dissipation induced by the kernel $\frakK$ is very weak,  we are forced  to estimate the coefficients $b(\Theta)$ and $q(\Theta)$ and their temporal or spatial derivatives in the $\LinfTLinf$ norm.  This requirement necessitates, for instance, controlling the norm    $\|\theta_ t\|_{\LinfLinf}$,  which by using  Sobolev embedding theorem mandates initial data in $\Hfour$ for the Pennes equation and requires the source term $\mathcal{G}$ (which depends on $p_t$) to be estimated in a more regular space. \\
	\indent To
	circumvent this difficulty, and in order to reduce the regularity of the initial data for the Pennes equation, we introduce \emph{time-weighted} energy estimates for the parabolic Pennes equation. Hence, instead of estimating   $\| \theta_ t\|_{\LinfLinf}$ we estimate  the time-weighted norm $\|\sqrt{t} \theta_ t\|_{\LinfLinf}$ which allows for the initial temperature data to be in $\Hthree$ rather than $\Hfour$. \\
	\indent	Having the linear estimates in hand, we can then employ the Banach fixed-point theorem to transfer the results to the nonlinear Westervelt--Pennes problem, provided that the pressure and temperature data are sufficiently small and smooth. This result is contained in Theorem~\ref{thm: West Pennes}. \\
	\indent Subsequently, we can analyze the equation for the concentration $c$. There we  rely on Ladyzhenskaya's inequality \eqref{Lady_Inequality_2} which helps with reducing the assumptions on the coefficients in \eqref{concentration equation} by requiring $\bfv$ to be in $L^4(0,T; L^4(\Omega))$ rather than $L^\infty(0,T; L^\infty(\Omega))$.  This reduction is helpful for the case $\frakK=\delta_0$ with the strong damping $- \Delta \pt$ (covered also by our theory although not in an optimal way), where the assumptions on the pressure-temperature data could be further reduced to $(p_0,p_1,\theta_0)\in  \left(\Honetwo\right)\times  \Honezero \times \left(\Honetwo\right)$ by exploiting the parabolic character of the pressure problem; see~\cite{wilkeL_pL_qTheory2023}. The main theoretical result of the work for the Westervelt--Pennes-advection-diffusion system is contained in Theorem~\ref{thm: West Pennes concentration}.
	
	\subsection*{Outline of the rest of the paper} The rest of the exposition is organized as follows. In Section~\ref{Sec:Preliminaries}, we introduce the modeling assumptions and present the main contribution of this work together with some background theoretical results that we rely on in the analysis.  In Section~\ref{sec: WestPennes}, we conduct the analysis of the Westervelt--Pennes system with nonlocal acoustic damping of time-fractional type. Section~\ref{sec: WestPennes concentration} is dedicated to the local well-posedness analysis of the full Westervelt--Pennes-advection diffusion system. Finally, in Section~\ref{sec: Numerics} we discuss the numerical discretization of the problem and illustrate  its properties and nonlinear effects with several numerical experiments. 
	
	\subsection*{Notation} Below we use $C>0$ to denote a generic constant which may change its value from one line to the next. We use $x \lesssim y$ to denote $x \leq Cy$. We write $\lesssim_T$ when the hidden constant depends on the final time $T$ in such a manner that it tends to $+\infty$ as $T \rightarrow +\infty$. We further introduce the following notation on smooth domains:
	\begin{equation} 
		\Honethree =\left\{p\in H^3(\Omega)\,:\, \mbox{tr}_{\partial\Omega} p = 0, \  \mbox{tr}_{\partial\Omega} \D p = 0\right\}.
	\end{equation}  
	 When writing norms in Bochner spaces, we omit the temporal domain $(0,T)$; for example, $\|\cdot\|_{L^p(L^q(\Omega))}$ denotes the norm  in $L^p(0,T; L^q(\Omega))$.  We use subscript $t$ to emphasize that the temporal interval is $(0,t)$ for some $t \in (0,T)$; for example, $\|\cdot\|_{L^p_t(L^q(\Omega))}$ denotes the norm  in $L^p(0,t; L^q(\Omega))$ for $t \in (0,T)$. 
	
	\section{Theoretical preliminaries, assumptions, and statement of the main theoretical result} \label{Sec:Preliminaries}
	
	In this section, we set the assumptions, state the main result, as well as some useful background results that are needed in the analysis. Throughout this work we assume that $\Omega \subset \R^d$ with $d \in \{1,2,3\}$ is a bounded domain that is $C^2$ regular.  \\
	\indent  We consider the following system for the pressure-temperature-concentration $(p, \Theta, c)$:
	\begin{subequations} \label{coupled_problem original}    
		\begin{equation} \label{coupled_problem original system}
			\left\{ \begin{aligned}
				&\frac{1}{b(\Theta)}p_{tt}-\frac{q(\Theta)}{b(\Theta)}\Delta p - \frakK* \Delta \pt = \frac{k(\Theta)}{b(\Theta)}\left(p^2\right)_{tt}+f_p, \qquad &&\text{in} \ \Omega \times (0,T), \\[1mm]
				& 	\rhoa \Ca\Theta_t -\kappaa\Delta \Theta+ \wb(\Theta) \rhob \Cb (\Theta-\Thetaa) = \tilde{\calG}(\pt, \Theta)+f_\Theta, \qquad &&\text{in} \ \Omega \times (0,T), \\[1mm]
				& c_t + \nabla \cdot \big(v(p, \nabla p) c\big)- \nabla \cdot \big(D(p, \Theta) \nabla c \big) = f_c, \qquad &&\text{in} \ \Omega \times (0,T).
			\end{aligned} \right.
		\end{equation}
		We couple \eqref{coupled_problem original system} with Dirichlet boundary conditions 
		\begin{eqnarray} \label{coupled_problem_original_BC}
			p\vert_{\partial \Om}=0, \qquad \Theta\vert_{\partial \Om}= \Thetaa, \qquad c\vert_{\partial \Om}=0,
		\end{eqnarray}
		and the initial data
		\begin{eqnarray} \label{coupled_problem_original_IC}
			(p, p_t)\vert_{t=0}= (p_0, p_1), \qquad \Theta \vert_{t=0}=\Theta_0, \qquad c\vert_{t=0}=c_0.
		\end{eqnarray}
	\end{subequations} 
	We next discuss and then set the assumptions on the involved temperature and pressure-dependent coefficients as well as the absorbed energy.
	
	\subsection{Assumptions on the memory kernel} Following~\cite{kaltenbacher2024limiting}, where the Westervelt equation with nonlocal damping of time-fractional type is analyzed, we consider the problem in an integro-differential form where we impose a set of assumptions on the memory kernel $\frakK$ which, among others, cover the case of having dissipation of Caputo--Djrbashian fractional type. More precisely, we impose the following regularity and coercivity assumptions on the memory kernel in \eqref{coupled_problem original system}. 
	
	\begin{assumption}\label{assumptions kernel}	We assume that the kernel satisfies
			\begin{equation}\label{assumption regularity kernel} 
				\frakK \in L^1(0,T) \cup \{\delta_0\}  .
			\end{equation}
				Furthermore, there exists $\CfrakK>0$, such that 
			\begin{equation}\label{assumption coercivity kernel} 
				\begin{aligned}
					\int_0^{t} \intO \left(\frakK* y \right)(s) \,y(s)\dxs\geq 	\CfrakK
					\int_0^{t} \|(\frakK* y)(s)\|^2_{\Ltwo} \ds,  \quad y\in L^2(0, T; \Ltwo),
				\end{aligned}
			\end{equation}
		 for all $t\in(0,T)$.	
\end{assumption} 
\indent The coercivity assumed in \eqref{assumption coercivity kernel} caters to the need to have $(-\Delta)^{\nu} p_t$ with $\nu \in \{0, 1 , \ldots \}$ as the test function for the wave equation. We allow for $\frakK= \delta_0$ in \eqref{assumption regularity kernel}, where $\delta_0$ is the Dirac delta distribution, to cover the case of having strong damping in the equation (i.e., $-  \Delta \pt$), 
although we focus our efforts in the analysis on $\frakK \in L^1(0,T)$. We use $\|\cdot\|_{\calM(0,T)}$ to denote the total variation norm which for the examples considered can be understood as: 
\begin{equation} \label{def_calM}
	\|\frakK\|_{\mathcal{M}(0,T)}=\begin{cases}
		1 &\text{ if }\frakK=  \delta_0,\\
		\|\frakK\|_{L^1(0,T)}&\text{ if }\frakK \in L^1(0,T).
	\end{cases}
\end{equation}
The coercivity assumption \eqref{assumption coercivity kernel} is there in order to achieve sufficient damping from the $\frakK$ term. 	As discussed in \cite{kaltenbacher2024limiting}, various classes of kernels satisfy these assumptions. A prominent example is the Abel kernel given in \eqref{Abel kernel}, which results in the time-fractional damping $-\Delta \Dtalpha p$ in the equation. Here $\Dtalpha v$ denotes the Caputo--Djrbashian derivative of order $\alpha \in (0,1)$:
\begin{equation} \label{def Caputo derivative}
	\Dtalpha v = \frac{1}{\Gamma(1-\alpha)} \intt (t-s)^{-\alpha} v_t \ds \quad \text{for }\ v \in W^{1,1}(0,T), \, t \in (0,T);
\end{equation}
see, e.g.,~\cite{kubicaTimefractionalDifferentialEquations2020} for details. Exponential kernels in the form of
\begin{equation} \label{exponential_kernel}
	\frakK(t) = \frac{1}{\tau} \exp\left(-\frac{t}{\tau}\right), 
\end{equation}
where $\tau>0$ is the relaxation time, also satisfy Assumption~\ref{assumptions kernel}. More generally, the kernels of Mittag-Leffler type given by
\begin{equation} \label{ML_kernels}
	\begin{aligned}
		\frakK = \left(\frac{1}{\tau}\right)^{a-b}\frac{1}{\tau^b}t^{b-1}E_{a,b}\left(-\Big(\frac{t}{\tau}\Big)^a\right)
	\end{aligned}
\end{equation}
 fit within the theoretical framework of this work, where the generalized Mittag-Leffler function is given by 
\begin{align} \label{def_MittagLeffler}
	E_{a, b}(t) = \sum_{k=0}^\infty \frac{t^k}{\Gamma(a k + b)}, \qquad a > 0,\ t,\, b \in \mathbb{R};
\end{align}
see, e.g.,~\cite[Ch.\ 2]{kubicaTimefractionalDifferentialEquations2020}. For details on verification of assumed properties for these kernels, we refer to~\cite[Sec.\ 4]{kaltenbacher2024limiting}.

	\subsection{Assumptions on the temperature-dependent parameters and absorbed energy}\label{Section_Assumptions}
	 In the modeling literature (see, for example~\cite{bilaniukSpeedSoundPure1993}), the functions $q$ and $b$ are typically fitted to data assuming a polynomial dependency on the temperature $\Theta$. That is, these functions usually have the following form:
	\begin{equation} 
		\begin{aligned}
			q(s) =\, \sum_{i=0}^{m_1} q_i s^{i}, \qquad b(s) =\, \sum_{i=0}^{m_2} b_i s^{i},
		\end{aligned}
	\end{equation}
	for some $m_1$,  $m_2\in \N_0$ and $q_i$, $b_i \in \R$.
	 The coefficient of nonlinearity in the Westervelt equation is given by
	\begin{equation} \label{def_k}
		k(\Theta) = \frac{\beta_{a}}{\rhoa q(\Theta)},
	\end{equation}
	where $\betaa$ and $\rhoa$ are the nonlinearity parameter and (as mentioned) the medium density, respectively. Concerning the blood perfusion $\wb$, several different forms appear in the literature.  It is often assumed to be linear~\cite{dengParametricStudiesPhase2000} or higher-order polynomial~\cite{tompkinsnTemperaturedependentConstantrateBlood1994}: 
	\begin{subequations}\label{Assumption_omega}  
		\begin{equation}\label{wb_form_1}
			\wb(s) = a_1 + a_2 s+ \ldots a_{l} s^l, \ l \in \N;
		\end{equation}
		it also appears in a power-law form~\cite{tompkinsnTemperaturedependentConstantrateBlood1994}
		\begin{equation}
				\wb(s) = a_1 + a_2 (s-a_3)^{a_4}, \ a_{1,2,3,4} \in \R, a_4>1;
		\end{equation}
		or as the Gaussian function of the temperature in certain temperature intervals 
		\begin{equation}\label{wb_form_2}
			\wb(s) = a_1+a_2 e^{-a_3 (s-a_4)^2}, \quad s \in [s_0, s_1],
		\end{equation}
	\end{subequations}
	and constant in others; see~\cite{langImpactNonlinearHeat1999}. For the purposes of the analysis, we need sufficient smoothness of these functions in the sense of local Lipschitz continuity.
\begin{assumption}\label{assumptions: q b k wb calG}
	We make the following assumptions on the temperature-dependent parameters:
	\begin{itemize}
		\item $q$, $b \in \Cthreeoneloc(\R)$ and 
		$
		k(s) =\frac{\upsilon}{q(s)} 
		$ for some $\upsilon \in \R$; \vspace*{1mm}
		\item $q(\Thetaa)>0$ and $b(\Thetaa)>0$; \vspace*{1mm}
		\item $\wb \in \Ctwooneloc(\R)$.
	\end{itemize}
	
\end{assumption} 
The conditions on the positivity of $q(\Thetaa)$ and $b(\Thetaa)$ correspond to the usual assumptions on the positivity of these parameters in a setting of constant (that is, ambient) temperature. \\
\indent The function $\tilde{\calG}$ in the Pennes equation models the absorbed acoustic energy. We assume it has the following form, as commonly done in the literature; see, e.g.\change{~\cite{norton2016westervelt}}.
\begin{assumption}\label{assumtion calG} The absorbed acoustic energy is given by 
\begin{equation}\label{calG form}
		\tilde{\calG} (\pt, \Theta)= \tilde{\zeta} \frakG(\Theta)  \pt^2 \quad \text{with } \quad  \frakG(\Theta) =\frac{b(\Theta)}{q^2(\Theta)} \ \text{for some } \ \tilde{\zeta}>0.
	\end{equation}
\end{assumption}

\noindent	We remark that in the literature also time-averaged versions of this functional appear in the form of
	\begin{equation}\label{Q_Form_2}
		\tilde{\calG} (\pt, \Theta)=   \frac{\tilde{\zeta}}{t_2-t_1} \int_{t_1}^{t_2} \frakG(\Theta) \pt^2(s) \ds, 
	\end{equation}  where $0 \leq t_1 < t_2 \leq T$; see, e.g.,~\cite{connorBioacousticThermalLensing2002}. The theory presented in this work can be extended in a straightforward manner to this setting as well, however for the sake of streamlining the arguments we assume here the form in \eqref{calG form}.
	\subsection{The diffusion matrix and the transport velocity}   In the advection-diffusion equation for the concentration $c$, we make the following assumptions on its pressure- and temperature-dependent functions.
	\begin{assumption} \label{assumptions: D v} 
	We assume that the coefficient $D$ can be decomposed into a diffusive and a perturbation component as follows:
	\begin{equation}
	D(p, \Theta) \equiv 	D(p, \theta+\Thetaa)= D_0 + \calD(p, \theta),
\end{equation}
 where there exists $d_0>0$ such that for all $\mathbf{u}\in \R^d$, 
				\begin{equation} \label{positivity D0}
						\int_\Omega D_0 \mathbf{u}\cdot \mathbf{u}\dx\geq d_0\int_\Omega |\mathbf{u}|^2\dx,
					\end{equation}
	and $\calD \in  \Czerooneloc(\R \times \R)$ with $\calD(0,0)=0$. \\
	\indent Concerning the velocity $\bfv$, we assume that its components $v_i$ are bounded in the following sense
	\begin{equation}
		\begin{aligned}
			|v_i(z_1, z_2) | \leq C\big(1+ |z_1|+|z_2|\big), \quad\text{for } \ i=1, \ldots, d.
		\end{aligned}
		\end{equation} 		

	\end{assumption}
	These assumptions are in accordance with those made for in available studies of linear models of drug delivery; see, e.g.,~\cite{ferreiraDrugDeliveryEnhanced2022}, where $D_0$ is the identity matrix.  
	
	\subsection{Rewriting the problem} We next introduce the change of variables $\theta = \Theta-\Thetaa$ and divide the Pennes equation by $\rhoa \Ca$ to arrive at the scaled problem 
	\begin{subequations} \label{coupled_problem West Pennes concentration}    
		\begin{equation} \label{coupled_system West Pennes concentration} \tag{W--P-c}
			\boxed{ 
				\begin{aligned}
					&\ \frac{1}{b(\theta+\Thetaa)} p_{tt}-\frac{q(\theta+\Thetaa)}{b(\theta+\Thetaa)}\Delta p -  \frakK*\Delta \pt =\frac{ k(\theta+\Thetaa)}{b(\theta+\Thetaa)}\left(p^2\right)_{tt}+f_p \ &&\text{in} \ \Omega \times (0,T), \\[2mm]
					&\ \theta_t -\kappa\Delta \theta+ \nu \wb(\theta+\Thetaa)  \theta = \calG(\pt, \theta+\Thetaa)+f_\theta \ &&\text{in} \ \Omega \times (0,T), \\[2mm]
					&\ c_t + \nabla \cdot \big(\bfv(p, \nabla p) c\big)- \nabla \cdot \big(D(p, \theta+\Thetaa) \nabla c\big) = f_c \ &&\text{in} \ \Omega \times (0,T),
				\end{aligned} 
			}
		\end{equation}
		with
		\begin{equation}
			f_\theta = \frac{1}{\rhoa \Ca} f_{\Theta-\Thetaa}, \quad	\nu = \frac{\rhob \Cb}{\rhoa \Ca}, \quad \ \kappa = \frac{\kappaa}{\rhoa \Ca}, \quad \ \calG = \frac{1}{\rhoa \Ca} \tilde{\calG}. 
		\end{equation} 
		Now we have homogeneous Dirichlet boundary conditions
		\begin{equation} \label{coupled_problem_BC_WPc} \tag{W--P-c bc}
			p\vert_{\partial \Om}=0, \qquad \theta\vert_{\partial \Om}= 0, \qquad c\vert_{\partial \Om}=0,
		\end{equation}
		and the initial data 
		\begin{equation} \label{coupled_problem_IC_WPc}\tag{W--P-c ic}
			(p, p_t)\vert_{t=0}= (p_0, p_1), \qquad \theta \vert_{t=0}=\Theta_0- \Thetaa:= \theta_0, \qquad c\vert_{t=0}=c_0.
		\end{equation}
	\end{subequations}

	\subsection{Main theoretical result}  Next, we state and discuss the main theoretical result of the present work before proceeding to its proof carried out in several stages. 
To this end, we introduce the solutions space for the pressure by
\begin{equation} \label{def_Xp}
	\begin{aligned}
		\mathcal{X}_p 
		= \Big\{
		p \in L^{\infty}(0, T; \Honethree):\, &p_t \in L^{\infty}(0, T;  \Honetwo), \\
		&\, \ptt \in \LtwoTHonez, \\
		&\,  \frakK*\nabla \D \pt \in \LtwoTLtwo \Big\},
	\end{aligned}
\end{equation}
with the corresponding norm
\begin{equation}
	\begin{aligned}
		\|p\|_{\Xp} = \left(\Vert p\Vert^2_{L^\infty(\Hthree)}+\Vert p_t\Vert_{L^\infty(\Htwo)}^2 +\|\ptt\|^2_{\LtwoHone}+ \|\frakK* \nabla \D \pt\|^2_{\LtwoLtwo}\right)^{1/2}.
	\end{aligned}
\end{equation}	
Further, the solution space for the temperature is given by 
	\begin{equation} \label{def_Xtheta}
	\begin{aligned}  
		\mathcal{X}_\theta = \Big\{ \theta \in  \LinfTHonethree:\quad \thetat \in&\ \LtwoTHonetwo, \\
		\sqrt{t}\thetat \in& \LinfTHonetwo  \Big\},
	\end{aligned}
\end{equation}
with the norm
\begin{equation}
	\|\theta\|_{\Xtheta} = \left(\|\theta\|^2_{\LinfHthree}+\|\theta_t\|^2_{\LtwoHtwo}+\|\sqrt{t}\theta_t\|^2_{\LinfHtwo}\right)^{1/2}.
\end{equation}
Thirdly, the solution space for the concentration is given by
\begin{equation} \label{def_Xc}
	\begin{aligned}
		\Xc = \LinfTLtwo \cap \LtwoTHonez \cap W^{1,1}(0,T; \Hneg),
	\end{aligned}
\end{equation}
with the norm
\begin{equation}
	\|c\|_{\Xc} = \left(\|c\|^2_{\LinfLtwo}+\|\nabla c\|^2_{\LtwoLtwo}+\|c_t\|^2_{L^1(\Hneg)} \right)^{1/2}.
\end{equation}
Then the solution space for the multiphysics system is given by
\begin{equation} \label{def_Xpthetac}
	\solspaceWestPennesconcentr = \Xp \times \Xtheta \times \Xc,
\end{equation}
with the norm
\begin{equation}
	\|(p, \theta, c)\|_{\solspaceWestPennesconcentr} = \left( \|p\|^2_{\Xp}+\|\theta\|^2_{\Xtheta}+\|c\|^2_{\Xc} \right)^{1/2}.
\end{equation}
We can now define the notion of the solution in $\solspaceWestPennesconcentr$.
\begin{definition}\label{def solution} 
	The triplet $(p, \theta, c) \in \solspaceWestPennesconcentr$ is a solution of \eqref{coupled_system West Pennes concentration}, \eqref{coupled_problem_BC_WPc}, \eqref{coupled_problem_IC_WPc} if it satisfies 
	\begin{equation}
		\begin{aligned}
	\intTO \left(\frac{1}{b(\theta+\Thetaa)}p_{tt}-\frac{q(\theta+\Thetaa)}{b(\theta+\Thetaa)}\Delta p -  \frakK*\Delta p_t - \frac{k(\theta+\Thetaa)}{b(\theta+\Thetaa)}\left(p^2\right)_{tt}-f_p \right) \phi_p \dxs= 0,		
		\end{aligned}	
	\end{equation}
	as well as
	\begin{equation}
		\begin{aligned}
			\intTO \left(\theta_t -\kappa\Delta \theta+  \wb(\theta+\Thetaa) \nu \theta - \calG(\pt, \theta+\Thetaa)-f_\theta\right) \phi_\theta \dxs=0,
		\end{aligned}
	\end{equation}
	and
	\begin{equation}
		\begin{aligned}
			\begin{multlined}[t]\int_0^T \langle \ct, \phi_c\rangle_{\Hneg, \Honezero} \ds +\intTO  \bfv (p, \nabla p) c \cdot \nabla \phi_c\dxs\\\hspace*{4cm}+ \intTO D(p, \theta+\Thetaa) \nabla c \cdot \nabla \phi_c \dxs
			=\, \intTO f_c \phi_c \dxs \end{multlined}
		\end{aligned}
	\end{equation}
	for all $\phi_p \in \LtwoTLtwo$, $\phi_\theta \in \LtwoTLtwo$, and $\phi_c \in L^\infty(0,T; \Honezero)$, with
	\begin{equation}
		(p, p_t)\vert_{t=0}= (p_0, p_1), \qquad \theta \vert_{t=0}= \theta_0, \qquad c\vert_{t=0}=c_0.
	\end{equation}
	
\end{definition}
\noindent We are now ready to state the main result.
\begin{theorem} \label{thm: West Pennes concentration}
	Let Assumptions~\ref{assumptions kernel}--\ref{assumptions: D v} hold. Let $\fp$, $\ftheta \in \HoneTHonezero$ and $\fc \in \LoneTLtwo$. Furthermore, let 
	\begin{equation}
		(\pzero, p_1) \in \Honethree \times \left(\Honetwo \right), \quad \thetazero \in \Honethree, \quad c_0 \in \Ltwo.
	\end{equation}
	There exist $\deltap>0$ and $\deltatheta>0$  small enough, such that if
	\begin{equation}
		\begin{aligned}
			&	\|\pzero\|_{\Hthree} + \|p_1\|_{\Htwo} +\|\fp\|_{\HoneHone}\leq \deltap,\\
			&		\|\thetazero\|_{\Hthree} + \|\ftheta\|_{\HoneHone}\leq \deltatheta,
		\end{aligned}
	\end{equation}
	then the system \eqref{coupled_system West Pennes concentration}, \eqref{coupled_problem_BC_WPc}, \eqref{coupled_problem_IC_WPc} has a unique solution in the sense of Definition~\ref{def solution}. The solution satisfies
	\begin{equation}
		\begin{aligned}
			\|(p, \theta, c)\|_{\solspaceWestPennesconcentr} \lesssim\, &\, \begin{multlined}[t]
				\|\pzero\|_{\Hthree} + \|p_1\|_{\Htwo} +\|\thetazero\|_{\Hthree} +\|c_0\|_{\Ltwo}\\\hspace*{1cm}+\|\fp\|_{\HoneHone}+ \|\ftheta\|_{\HoneHone}+ \|\fc\|_{\LoneLtwo}.
			\end{multlined}
		\end{aligned}
	\end{equation}
\end{theorem}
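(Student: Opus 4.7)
The overall strategy is to exploit the triangular structure of \eqref{coupled_system West Pennes concentration}: since the concentration $c$ does not appear in the Westervelt and Pennes equations, I would first establish well-posedness of the $(p,\theta)$-subsystem in $\Xp \times \Xtheta$ (this is the content of Theorem~\ref{thm: West Pennes} mentioned earlier), and then, with $(p,\theta)$ now known, treat the advection-diffusion equation for $c$ as a linear parabolic problem with given coefficients and source. The final energy bound is obtained by combining the two sub-estimates.

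For the Westervelt--Pennes subsystem, my plan is a Banach fixed-point argument on a closed ball $\ball \subset \Xp \times \Xtheta$ of radius proportional to the smallness parameters $\deltap, \deltatheta$. Given $(\bar p,\bar\theta) \in \ball$, I freeze the nonlinear coefficients $b(\bar\theta+\Thetaa)$, $q(\bar\theta+\Thetaa)$, $k(\bar\theta+\Thetaa)$ and the source $\calG(\bar p_t, \bar\theta+\Thetaa)$, obtaining two decoupled linear problems. On a Galerkin semi-discrete level, I would test the wave equation with $\pn_t$, $-\D \pn_t$, and $\D^2 \pn_t$, using the coercivity \eqref{assumption coercivity kernel} of the memory kernel at each order to absorb the $\frakK*$ terms and generate control of $\|\frakK*\nabla\D \pn_t\|_{\LtwoLtwo}$. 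Since the nonlocal dissipation is weak, the temperature-dependent coefficients in front of $\ptt$ and $\D p$ must be controlled in $\LinftLinf$, which, by Sobolev embedding, requires $\|\theta\|_{\LinfHtwo}$ and $\|\thetat\|_{\LinftLinf}$-type estimates. The key technical device is to test the Pennes equation not only with $-\D\theta$ and $\D^2\theta$, but also, after differentiating in time, with $t\thetat$ and $-t\D\thetat$, producing the time-weighted estimate on $\sqrt{t}\thetat$ in $\LinfHtwo$ that is built into the norm of $\Xtheta$; this is precisely what allows initial temperature in $\Hthree$ rather than $\Hfour$. Smallness of $\deltap, \deltatheta$ then ensures the non-degeneracy conditions $b(\theta+\Thetaa)\geq \ulb>0$, $q(\theta+\Thetaa) \geq \ulq>0$, and $1 - 2k(\theta+\Thetaa)p \geq \alphastar > 0$, making the fixed-point map a self-map. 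Contractivity is verified in a weaker norm (dropping one spatial derivative), using the local Lipschitz regularity from Assumption~\ref{assumptions: q b k wb calG} for differences of nonlinear coefficients evaluated at two iterates.

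For the concentration equation, given the now-known $(p,\theta)\in \Xp\times\Xtheta$, the Sobolev embedding $\Honetwo \hookrightarrow L^\infty$ yields $p, \nabla p \in \LinfTLinf$, so by the growth condition in Assumption~\ref{assumptions: D v}, $\bfv(p,\nabla p) \in L^\infty(L^\infty) \subset L^4(L^4)$. Because $\calD \in \Czerooneloc$ with $\calD(0,0)=0$ and $p,\theta$ are small and bounded, $D(p,\theta+\Thetaa)$ is a bounded perturbation of the coercive $D_0$, so the full diffusion matrix remains uniformly positive definite. I would then set up a Galerkin approximation, test with $c^n$ to obtain an energy identity, bound the convective term with Ladyzhenskaya's inequality $\|c^n\|_{\Lfour}^2 \lesssim \|c^n\|_{\Ltwo}\|\nabla c^n\|_{\Ltwo}$ (and its two-dimensional analogue) combined with Young's inequality to absorb $\|\nabla c^n\|_{\Ltwo}^2$ into the diffusion term, and close via \Gronwall's inequality. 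The estimate on $c_t$ in $L^1(\Hneg)$ follows by duality from the equation, using $\bfv c \in \LtwoLtwo$. Uniqueness and continuous dependence follow from the linearity of the problem in $c$ by a standard energy argument on the difference of two solutions.

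The main obstacle is the energy analysis of the linearized Westervelt--Pennes system, specifically reconciling three competing demands: (i) the coercivity of $\frakK*$ restricts admissible test functions to derivatives of $p$ compatible with its convolutional action; (ii) the lack of strong parabolic smoothing in the wave equation forces control of coefficients in $\LinftLinf$; and (iii) keeping the initial data at the regularity level $\Hthree$ for $\thetazero$ precludes standard energy testing at the $\Hfour$ level for $\theta$. Threading the needle via the time-weighted $\sqrt{t}\thetat$ estimate, while simultaneously maintaining the higher-order estimates for $p$ up to $\LinfHthree$, is the delicate heart of the argument; once this is in place, the fixed-point closure, the treatment of $c$, and the assembly of the final estimate in $\solspaceWestPennesconcentr$ proceed along established lines.
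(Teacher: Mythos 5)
Your proposal follows essentially the same route as the paper: the $(p,\theta)$-part is exactly Theorem~\ref{thm: West Pennes} (fixed point on a small ball, linear estimates with the kernel coercivity \eqref{assumption coercivity kernel}, time-weighted $\sqrt{t}\thetat$ estimates for the Pennes equation, contraction in a weaker norm), and the concentration is then handled, as in the paper, by a Galerkin energy estimate in which the convective term is controlled via Ladyzhenskaya's inequality \eqref{Lady_Inequality_2} and Young's inequality, the perturbation $\calD(p,\theta)$ is absorbed into the coercive $D_0$ using $\calD(0,0)=0$ and smallness of $\Rp,\Rtheta$, Gr\"onwall closes the bound, and $c_t\in L^1(0,T;\Hneg)$ follows by duality. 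The only deviations are inessential details inside the cited linear analysis (e.g.\ the precise choice of test functions $\Delta^2\pt$, $-\Delta\ptt$, $\pntt$ versus your $\pt$, $-\Delta\pt$, $\Delta^2\pt$, and the paper's $L^4$ bound on $\bfv$ versus your $L^\infty$ bound), which do not change the argument.
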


\noindent Before proceeding to the details of the analysis, let us first discuss the main result.
\begin{itemize}[leftmargin=*]
	\item The relatively high regularity assumptions on the pressure and temperature data are due to the weak damping in the acoustic equation. Indeed, the regularity assumptions made here on the initial pressure conditions $(p_0, p_1)$ agree with those for the  fractionally damped Westervelt equation with constant coefficients; see~\cite{kaltenbacher2024limiting, kaltenbacherInverseProblemNonlinear2021, bakerNumericalAnalysisTimestepping2024}. \vspace*{1mm}	 
\item  The time weights play a significant role in reducing the needed regularity of solutions of the Pennes equation (that is, in optimizing the assumptions on $\theta_0$).  In fact, for $\theta_0\in H^3(\Omega)$, we obtain 
\[
\sqrt{t}\thetat \approx \sqrt{t} \Delta \theta \in \LinfTHonetwo;
\]
see also Proposition~\ref{prop: linear temperature higher order} below.  This enhancement in integrability plays a vital role in ``closing" the fixed-point argument for the Westervelt--Pennes sub-system in Section~\ref{sec: WestPennes}. 
 We do not employ the time-weighted energy estimate for the pressure equation since it is hyperbolic and the method would not enhance the integrability of $p$.\vspace*{1mm}
\item  The requirement for the initial data to be small  arises naturally within the proof of the local well-posedness, and it is related to the non-degeneracy issue that characterizes the Westervelt equation and other nonlinear  acoustic wave models (see~\cite{nikolicLocalWellposednessCoupled2022, kaltenbacherParabolicApproximationQuasilinear2022, kaltenbacherGlobalExistenceExponential2009}). Here, however, degeneracy needs to be avoided not only in the term $1-2 k(\Theta)p$ but also in the coefficients $q=q(\Theta)$ and $b=b(\Theta)$ and thus the smallness is imposed not only on the pressure but also the temperature data. We point out that no smallness assumption on the final time $T$ is made in Theorem~\ref{thm: West Pennes concentration}. We expect, however, that one could relax the smallness assumption on data to involve a lower-order norm by requiring also $T$ to be small, as done in the analysis of the fractional Westervelt equation with constant coefficients in~\cite{kaltenbacher2024limiting}. \vspace*{1mm}
	\item Note that, although Theorem~\ref{thm: West Pennes concentration} covers the case $\frakK= \delta_0$ where the acoustic damping is $-\Delta \pt$, the assumptions on data are not optimal in that setting where one could exploit the parabolic character of the pressure problem as well. For the analysis of the Westervelt--Pennes system with strong acoustic damping using a maximal $L^p$ regularity approach, we refer to~\cite{wilkeL_pL_qTheory2023}. \vspace*{1mm}
	\item We note also that with more smoothness assumed of the source function $\fc$, one could have more regular concentration field $c=c(x, t)$ as the velocity and diffusion terms are quite smooth thanks to the high regularity of the pressure and temperature fields.
\end{itemize}

	\subsection{Background embeddings and useful inequalities} Before proceeding to the proof of Theorem~\ref{thm: West Pennes concentration}, we here several helpful results on which we will often rely. In the analysis, we frequently use the embeddings  $\Htwo \hookrightarrow \Linf$ and $\Hone \hookrightarrow L^p(\Om)$, where $1 \leq p \leq 6$.  We also recall Ladyzhenskaya's inequality for functions  $u \in H^1(\Om)$:
	\begin{equation}\label{Lady_Inequality}
		\| u\|_{L^4(\Omega)} \leq  \CL \|u\|_{H^1(\Omega)}^{d/4}\|u\|_{L^2(\Omega)}^{1-d/4}, \qquad 1\leq d\leq 4.
	\end{equation}
\noindent	Lastly, we state here the precise form of \Gronwall's inequality that we use below.
	\begin{lemma} \label{lemma: Gronwall}
		Let $u$, $v \in L^\infty(0,T)$ be almost everywhere non-negative functions that satisfy the integral inequality
		\begin{equation} \label{assumption: Gronwall_ineq}
			u(t) + v(t) \leq a_1+\intt a_2(s) u(s) \ds \quad \text{for a.e.} \ t \in [0,T],
		\end{equation}  
		where $a_1 \geq 0$ and $a_2 \in L^1(0,T)$ is an almost everywhere  non-negative function. Then
		\begin{equation}\label{Gronwall_ineq}
			\begin{aligned}
				u(t) + v(t) \leq a_1e^{\int_0^t a_2(s)\ds} \quad \text{for a.e.} \ t \in [0,T].
			\end{aligned}
		\end{equation}
	\end{lemma}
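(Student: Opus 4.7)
The plan is to reduce the two-function statement to the classical scalar Gr\"onwall inequality by exploiting the sign assumption on $v$. Define $w(t) := u(t) + v(t)$, which belongs to $L^\infty(0,T)$ and is nonnegative almost everywhere. Because $v \geq 0$, we have $u(s) \leq w(s)$ for a.e.\ $s \in (0,T)$, and since $a_2 \geq 0$ a.e., the hypothesis \eqref{assumption: Gronwall_ineq} immediately implies
\begin{equation*}
    w(t) \leq a_1 + \intt a_2(s) w(s) \ds \quad \text{for a.e. } t \in [0,T].
\end{equation*}
Thus proving the lemma reduces to showing the classical bound $w(t) \leq a_1 e^{\int_0^t a_2(s)\ds}$ under this integral inequality.

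For this reduction step I would use the standard integrating-factor argument, adapted to the fact that $a_2$ is only $L^1$ (not continuous). I would introduce the auxiliary function
\begin{equation*}
    G(t) := \intt a_2(s)\, w(s)\ds,
\end{equation*}
which is absolutely continuous with $G'(t) = a_2(t)\, w(t)$ a.e., and observe that the assumed inequality rewrites as $w(t) \leq a_1 + G(t)$. Multiplying by the nonnegative weight $a_2(t)$ and using $a_2 \geq 0$, this yields the differential inequality
\begin{equation*}
    G'(t) - a_2(t)\, G(t) \leq a_1\, a_2(t) \quad \text{for a.e. } t \in [0,T].
\end{equation*}

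Multiplication by the integrating factor $e^{-\int_0^t a_2(s)\ds}$ turns the left-hand side into the derivative of $G(t)\, e^{-\int_0^t a_2(s)\ds}$. Integrating from $0$ to $t$, using $G(0)=0$ and the antiderivative identity $\tfrac{\textup d}{\textup ds} e^{-\int_0^s a_2(r)\textup dr} = -a_2(s)\, e^{-\int_0^s a_2(r)\textup dr}$, I obtain
\begin{equation*}
    G(t)\, e^{-\int_0^t a_2(s)\ds} \leq a_1\Bigl(1 - e^{-\int_0^t a_2(s)\ds}\Bigr),
\end{equation*}
and hence $G(t) \leq a_1\bigl(e^{\int_0^t a_2(s)\ds} - 1\bigr)$. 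Substituting back into $w(t) \leq a_1 + G(t)$ gives the claimed bound $u(t) + v(t) = w(t) \leq a_1 e^{\int_0^t a_2(s)\ds}$ for a.e.\ $t \in [0,T]$.

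There is essentially no obstacle here; the only mild technicality is being careful that $a_2 \in L^1$ rather than continuous, which means working with the absolute continuity of $G$ and with ``almost everywhere'' versions of the pointwise manipulations. The key structural observation, which makes the statement no harder than classical Gr\"onwall despite the presence of $v$, is that nonnegativity of $v$ allows absorbing it into $w = u + v$ on both sides of the inequality without disturbing the right-hand side.
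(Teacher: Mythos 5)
Your proposal is correct and follows essentially the same route as the paper: both define $w = u + v$, use the nonnegativity of $v$ and $a_2$ to obtain $w(t) \leq a_1 + \int_0^t a_2(s)\, w(s)\, \textup{d}s$, and then invoke the classical scalar Gr\"onwall inequality. The only difference is that the paper cites a reference for that last step, whereas you prove it explicitly with the integrating-factor argument adapted to $a_2 \in L^1(0,T)$, which is a valid and self-contained substitute.
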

	\begin{proof}  
		Let $w = u+v$. By \eqref{assumption: Gronwall_ineq} and the fact that $v$ is non-negative, we have
		\begin{equation} \label{Gronwall_ineq_w}
			w(t) \leq a_1+\intt a_2(s) w(s) \ds \quad \text{for all } \ t \in [0,T].
		\end{equation}
		The statement then follows by applying \cite[Lemma 1]{dautrayMathematicalAnalysisNumerical2000}.
	\end{proof}
	
	\section{Analysis of the nonlocal Westervelt--Pennes system} \label{sec: WestPennes}   
Toward the proof of Theorem~\ref{thm: West Pennes concentration}, we first recall that the concentration $c$ does not appear in the wave-heat sub-system. Therefore, we can treat the equation for $c$ sequentially after first analyzing the Westervelt and Pennes equations in \eqref{coupled_system West Pennes concentration}.	In this section, we thus perform the analysis of the Westervelt--Pennes subsystem:   
	\begin{subequations} 
		\begin{equation} \label{coupled_problem West Pennes} \tag{W--P}
			\boxed{
				\begin{aligned}
					&\ \frac{1}{b(\theta+\Thetaa)} p_{tt}-\frac{q(\theta+\Thetaa)}{b(\theta+\Thetaa)}\Delta p -  \frakK*\Delta p_t = \frac{k(\theta+\Thetaa)}{b(\theta+\Thetaa)}\left(p^2\right)_{tt}+f_p \ &&\text{in} \ \Omega \times (0,T), \\[2mm]
					&\ \theta_t -\kappa\Delta \theta+  \wb(\theta+\Thetaa) \nu \theta = \calG\big(\pt, b(\theta+\Thetaa), q(\theta+\Thetaa)\big)+f_\theta \ &&\text{in} \ \Omega \times (0,T),
				\end{aligned} 
			}
		\end{equation}
		supplemented with homogeneous Dirichlet boundary conditions  
		\begin{equation} \label{coupled_problem_BC_WP} \tag{W--P bc}
			p\vert_{\partial \Om}=0, \qquad \theta\vert_{\partial \Om}=0,
		\end{equation}  
		and the initial data
		\begin{equation} \label{coupled_problem_IC_WP} \tag{W--P ic}
			(p, p_t)\vert_{t=0}= (p_0, p_1), \qquad \theta \vert_{t=0}=\theta_0. 
		\end{equation} 
	\end{subequations}
	The analysis is conducted through a fixed-point approach for the Westevelt--Pennes equations; for this purpose we need a suitable linearization of the pressure-temperature system. 
		\subsection{Bounds for the linearized  problem}	The linearized problem is obtained by linearizing the quadratic pressure terms and ``freezing" the temperature-dependent parameters $b$, $q$, and $\wb$ in the Westervelt and Pennes equations:
	\begin{equation} \label{linearized WP system} \tag{W--P lin}
		\boxed{
			\begin{aligned}  
				&\frac{1}{\bstar(x,t)} p_{tt}-\frac{\qstar(x,t)}{\bstar(x,t)}\Delta p -  \frakK*\Delta \pt = 2\frac{\kstar(x,t)}{\bstar(x,t)}\left(\pstar \ptt+ \pstart \pt\right)+\fp , \\[1mm]
				& \theta_t -\kappa\Delta \theta+  \nu \wbstar(x,t)  \theta = \calG +\ftheta . 
			\end{aligned} 
		}
	\end{equation}
With this linearization, the linearized pressure equation is decoupled from the heat equation, which can be analyzed in a subsequent step.  The precise assumptions on $\bstar$, $\qstar$, $\pstar$, and $\wbstar$ will be made in the fixed-point analysis.
For the moment, it is sufficient to consider the following general form of the linear acoustic problem with variable coefficients: 
	\begin{equation}\label{ibvp_linWest}
		\left\{
		\begin{aligned}
			&\frakm(x,t)\ptt - \frakn(x,t) \Delta p - \frakK* \Delta \pt -\frakl \pt = \fp,\\
			&(p, p_t)\vert_{t=0}= (p_0, p_1),\\
			&p|_{\partial\Omega}=0.
		\end{aligned}
		\right. 
	\end{equation}
We conduct the linear acoustic investigations under the following non-degeneracy assumption on the variable coefficients: there exist $\ulfrakm$, $\olfrakm$, $\ulfrakn$, and $\olfrakn$ such that
	\begin{equation}\label{nondegeneracy assumption frakm frakn}
	0<\ulfrakm \leq \frakm(x,t)\leq \olfrakm, \quad 0<\ulfrakn \leq \frakn(x,t)\leq \olfrakn
	\end{equation}
	for all $(x,t)\in \Omega  \times (0,T)$, as well as under sufficient regularity of these coefficients; see~\eqref{regularity frakm frakn frakl}.
	 Well-posedness analysis of linear acoustic problems of the form  in \eqref{ibvp_linWest} with  space-time variable coefficients   can be found in the recent works on fractional acoustic equations; see, e.g.,~\cite{kaltenbacher2024limiting}. Compared to the existing works, here we need to incorporate time weights in the definition of spaces to which $\frakm$ and $\frakn$ belong as a preparation for the later analysis of the nonlinear system.

\begin{proposition} \label{prop: linear pressure higher order}
	Given $T>0$, let $\fp \in H^1(0,T; \Honezero)$ and
	\[
	(p_0, p_1) \in \Honethree \times \left(\Honetwo \right). 
	\]   Furthermore, assume that the functions $\frakm$, $\frakn$, and $\frakl$ satisfy 
		\begin{equation} \label{regularity frakm frakn frakl}
					\begin{aligned}
							&\frakm  \in  \Xfrakm:= L^\infty(0,T; \Honethree),\qquad \sqrt{t}\frakm_t \in \LinfTLinf,\\ 
							& \frakn \in  \Xfrakn:= L^\infty (0,T; \Honethree),\qquad \sqrt{t}\frakn_t \in \LinfTLinf,\\
							& \frakl \in \Xfrakl:=  L^\infty(0,T; \Honetwo),
						\end{aligned}
				\end{equation}
	and that the non-degeneracy assumptions for $\frakm$ and $\frakn$ in \eqref{nondegeneracy assumption frakm frakn} hold.  
	Then there is a unique $p$ in $\mathcal{X}_p$ which solves   \eqref{ibvp_linWest}. This solution satisfies   the following estimate: 
	\begin{equation} \label{est_p_linear_1}
		\begin{aligned}
			\|p\|_{\mathcal{X}_p} 	\lesssim\, &\,
			\begin{multlined}[t]
			   \big(\|p_0\|_{\Hthree}+\| p_1\|_{\Htwo}
				+ \|\fp\|_{H^1(\Hone)} \big)   \exp \big(C(T)\Lambda_1(\frakm, \frakl, \frakn)\big),
			\end{multlined}
		\end{aligned}
	\end{equation}
		where the function $\Lambda_1$ is given by 
		\begin{equation} \label{def Lambda1}
				\begin{aligned}
				\begin{multlined}[t]
				\Lambda_1(\frakm, \frakl, \frakn) =\,  1+\|\frakn\|_{\Xfrakn}+\calL^2(\frakm, \frakn, \frakl)
					+ \int_0^T \frac{1}{\sqrt{t}}\Big( \|\sqrt{t}\frakm_t\|_{\Linf} +\|\sqrt{t}\frakn_t\|_{\Linf}\Big)\dt
					\end{multlined}
					\end{aligned}
			\end{equation}
			with
			\begin{equation}
			\calL(\frakm, \frakn, \frakl) =   \|\frakm\|_{\Xfrakm}\big(1+\|\frakm\|_{\Xfrakm}\big)\big(1+\|\frakn\|_{\Xfrakn}+\|\frakl\|_{\Xfrakl}\big).
			\end{equation}
\end{proposition}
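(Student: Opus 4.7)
The plan is to build the solution by a Faedo--Galerkin semi-discretization, derive a two-layer energy estimate that accommodates the time-weighted coefficients via the coercivity of the memory kernel, pass to the Galerkin limit by compactness, and establish uniqueness by linearity.

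I would take as Galerkin basis the $\Ltwo$-orthonormal eigenfunctions $\{\phi_j\}_{j\geq1}$ of the homogeneous Dirichlet Laplacian; since $\Delta \phi_j=-\lambda_j\phi_j$, both $\phi_j$ and $\Delta\phi_j$ vanish on $\partial\Omega$, so the finite-dimensional subspaces sit inside $\Honethree$. With $p^n=\sum_{j=1}^n\alpha^n_j(t)\phi_j$, substitution into the weak form of \eqref{ibvp_linWest} yields a linear ODE system for $\boldsymbol{\alpha}^n$, uniquely solvable on $[0,T]$ thanks to $\frakm\geq \ulfrakm>0$. A first energy estimate then follows from testing with $-\Delta \pt^n$: writing
\[
\int_\Omega \frakm\, \ptt^n (-\Delta \pt^n)\dx = \tfrac12\ddt\int_\Omega \frakm|\nabla \pt^n|^2\dx - \tfrac12 \int_\Omega \frakm_t|\nabla \pt^n|^2\dx + \int_\Omega \ptt^n\,\nabla \frakm \cdot \nabla \pt^n \dx,
\]
using the analogous identity for the $\frakn \Delta p^n$ term after re-integration by parts, and applying the coercivity \eqref{assumption coercivity kernel} with $y=\Delta \pt^n$ to absorb the memory contribution, one obtains $\LinfLtwo$ control of $\nabla \pt^n$ and $\Delta p^n$ together with $\|\frakK*\Delta \pt^n\|_{\LtwoLtwo}$.

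For the second layer I differentiate the Galerkin equation in time and test with $-\Delta \ptt^n$, treating the differentiated convolution either via Leibniz (when $\frakK\in L^1$ is smooth enough) or by retaining $\frakK*\Delta \ptt^n$ intact and re-using the coercivity. The initial value $\ptt^n(0)$ needed for this estimate is read off the equation at $t=0$ and bounded in $\Hone$ by $\|p_0\|_\Hthree$, $\|p_1\|_\Htwo$, and $\|\fp(0)\|_\Hone$ through the embedding $H^1(0,T;\Honezero)\hookrightarrow C([0,T];\Honezero)$. This step yields $\LinfLtwo$ bounds on $\nabla \ptt^n$ and $\Delta \pt^n$ together with a $\LtwoLtwo$ bound on $\frakK*\Delta \ptt^n$. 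The last pieces in the $\Xp$ norm, namely $p^n\in \LinfHthree$ and $\frakK*\nabla \Delta \pt^n\in \LtwoLtwo$, are then recovered directly from the equation: solving for $\Delta p^n$ and differentiating in space gives $\nabla \Delta p^n \in \LinfLtwo$ by the product rule with $\frakm,\frakn\in \LinfHthree\hookrightarrow L^\infty(0,T; W^{1,\infty}(\Omega))$ (valid for $d\leq 3$); analogously, solving for $\frakK*\Delta \pt^n$ and applying $\nabla$ yields $\frakK*\nabla \Delta \pt^n \in \LtwoLtwo$.

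The principal obstacle is the ``low-regularity-near-zero'' behaviour of $\frakm_t$ and $\frakn_t$: only $\sqrt{t}\frakm_t,\sqrt{t}\frakn_t\in\LinfTLinf$ is available, so a direct $L^1_t(\Linf)$ bound is out of reach. The terms generated in both layers, such as $\int_\Omega \frakm_t|\nabla \pt^n|^2 \dx$ and $\int_\Omega \frakn_t|\Delta p^n|^2 \dx$, are controlled via
\[
\int_0^T \|\frakm_t(s)\|_\Linf\,\|\nabla \pt^n(s)\|_\Ltwo^2 \ds \leq \|\nabla \pt^n\|_{\LinfLtwo}^2 \int_0^T \frac{1}{\sqrt{s}}\|\sqrt{s}\,\frakm_t(s)\|_\Linf \ds,
\]
and analogously for $\frakn_t$, producing exactly the integral appearing in $\Lambda_1$. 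Summing the two layers and invoking \Gronwall's inequality in the form of Lemma~\ref{lemma: Gronwall} then yields the bound \eqref{est_p_linear_1} at the Galerkin level. Standard weak-star and weak limits, combined with an Aubin--Lions argument for strong convergence in lower-order spaces, transfer the estimate to $p\in \Xp$. Uniqueness follows by linearity: testing the equation for the difference of two solutions with $-\Delta \pt$ and applying \Gronwall\ once more.
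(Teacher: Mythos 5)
Your overall scaffolding (Galerkin basis of Dirichlet--Laplacian eigenfunctions, kernel coercivity, the weight trick $\|\frakm_t\|_{\Linf}\le s^{-1/2}\|\sqrt{s}\,\frakm_t\|_{\Linf}$ feeding the integral in $\Lambda_1$, Gr\"onwall, weak limits) matches the paper, but the core energy argument you propose does not close under the stated hypotheses. Your second layer differentiates the equation in time and tests with $-\Delta \ptt$. At that point you must handle $(\frakm \ptt)_t=\frakm_t\ptt+\frakm p_{ttt}$: pairing $\frakm p_{ttt}$ with $-\Delta\ptt$ forces an integration by parts that produces $\int p_{ttt}\,\nabla\frakm\cdot\nabla\ptt$, and pairing $\frakm_t\ptt$ with $-\Delta\ptt$ requires either $\Delta\ptt\in\LtwoLtwo$ or, after integration by parts, $\nabla\frakm_t$. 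None of these objects is controlled: the target space $\Xp$ contains no $p_{ttt}$ or $\Delta\ptt$, and the hypotheses give only $\sqrt{t}\,\frakm_t\in L^\infty(\Linf)$ with no spatial derivatives of $\frakm_t$ (this is precisely the low regularity that the later fixed-point argument needs to tolerate). The same objection applies to $(\frakn\Delta p)_t$. In addition, your final bootstrap fails to deliver two of the four pieces of the $\Xp$-norm: solving the equation for $\Delta p$ and applying $\nabla$ would give $p\in\LinfHthree$ only if $\frakK*\nabla\Delta\pt$ were bounded in $L^\infty(\Ltwo)$, whereas your estimates (and the norm itself) control it only in $\LtwoLtwo$, and Young's inequality cannot upgrade this when merely $\frakK\in L^1(0,T)$ (e.g.\ the Abel kernel with $\alpha\ge 1/2$ is not in $L^2$).

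The paper avoids time-differentiation entirely: the high-order test function is $\Delta^2\pt$, which in one stroke yields $\|\sqrt{\frakm}\,\Delta\pt\|_{\LinfLtwo}$, $\|\sqrt{\frakn}\,\nabla\Delta p\|_{\LinfLtwo}$ (hence the $L^\infty(\Hthree)$ bound), and, through the coercivity \eqref{assumption coercivity kernel} applied to $y=\nabla\Delta\pt$, the $\LtwoLtwo$ bound on $\frakK*\nabla\Delta\pt$; this choice is also what the data regularity $(p_0,p_1,\fp)\in\Hthree\times\Htwo\times H^1(\Honezero)$ is calibrated for. The $\ptt$-terms generated by the commutators with $\frakm$ are then bounded by going back to the (semi-discrete) PDE itself—testing with $-\Delta\ptt$ and $\ptt$ to express $\|\nabla\ptt\|_{\LtwoLtwo}$ through already-controlled quantities—which is exactly where the factor $\calL(\frakm,\frakn,\frakl)$ in $\Lambda_1$ comes from. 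Two smaller points: the semi-discrete system is not a plain linear ODE system because of the memory term; the paper rewrites it as a Volterra integral equation of the second kind (using positive definiteness of the $\frakm$-mass matrix) and invokes Volterra theory, which is the clean way to get $\pn\in W^{2,\infty}(0,T;V_n)$ for weakly singular $\frakK$. For uniqueness the paper tests simply with $\pt$, which suffices and avoids any extra regularity demands; your $-\Delta\pt$ test also works within $\Xp$ but is unnecessary.
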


\begin{proof}
The proof is carried out using a Faedo--Galerkin discretization in space combined with energy estimates, analogously to~\cite[Proposition 3.1]{kaltenbacher2024limiting}. We approximate the solution by
\begin{equation}
	\begin{aligned}
		\pn(x,t) =\sum_{i=1}^n \xi^n_i(t)v_i(x),
	\end{aligned}
\end{equation}
where $\{v_i\}_{i=1}^\infty$ are smooth eigenfunctions of the Dirichlet-Laplacian operator. Fix $n\geq 1$ and let  $V_n=\textup{span}\{v_1, \ldots, v_n\}$. The semi-discrete acoustic problem is then given by
\begin{equation} \label{semi-discrete}
	\begin{aligned}
		\begin{multlined}[t] \big(\frakm \pntt- \frakn \D \pn - \frakK*\D \pnt-\frakl \pnt-\fp, v_j\big) = 0 \end{multlined}
	\end{aligned}
\end{equation}
for all $j=1, \ldots, n$, with approximate initial conditions $(\pn, \pnt)\vert_{t=0}=(p_{0}^n, p_{1}^n)$ taken as $\Ltwo$ projections of $(p_0, p_1)$ onto $V_n$. 
With $\boldsymbol{\xi}=[\xi^n_1 \ \xi^n_2 \ \ldots \ \xi^n_n]^T$, the approximate problem can be rewritten in matrix form
\begin{equation} \label{matrix_eq}
	\begin{aligned}
		\Mfrakm(t)\boldsymbol{\xi}_{tt}+ \Nfrakn(t) \boldsymbol{\xi}+ \mathbb{K}\, \frakK*\boldsymbol{\xi}_t+\Lfrakl(t)\bfxi_t= \boldsymbol{f},
	\end{aligned}
\end{equation}
where the entries of matrices $\Mfrakm(t)=[\Mfrakmij]$, $\Nfrakn(t)=[\Nfraknij]$, $\mathbb{K}=[\mathbb{K}_{ij}]$, $\Lfrakl=[\Lfraklij]$, and vector $\boldsymbol{f}=[\boldsymbol{f}_{i}]$ are given by
\begin{equation} \label{matrices}
	\begin{aligned}
		& \Mfrakmij(t)= (\frakm  v_i, v_j)_{L^2}, \quad && \Nfraknij(t)= -(\frakn \Delta  v_i, v_j)_{L^2},   \\
		& \mathbb{K}_{ij}= -(\Delta v_i,  v_j)_{L^2},\quad  && \Lfraklij(t)= - (\frakl v_i, v_j)_{L^2}, \hspace{1.5cm} \boldsymbol{f}_i(t)=(\fp(t), v_i)_{L^2}.
	\end{aligned}
\end{equation}
We introduce the vectors of coordinates of the approximate initial data in the basis:
\[
\boldsymbol{\xi}_0=[\xi^n_{0,1}, \ \xi^n_{0,2}, \ \ldots \ ,\xi^n_{0,n}]^T, \quad \boldsymbol{\xi}_1=[\xi^n_{1,1}, \ \xi^n_{1,2}, \ \ldots \ ,\xi^n_{1,n}]^T.
\]
By setting $\bfmu=\boldsymbol{\xi}_{tt}$, we have
\begin{equation} \label{eq_xi}
	\bfxi_t(t)=1*\bfmu+\bfxi_1, \quad	\boldsymbol{\xi}(t)=\boldsymbol{\xi}_0 +t \boldsymbol{\xi}_1+1*1*\bfmu.
\end{equation}
Therefore, the semi-discrete problem can be rewritten as
\begin{equation}
	\begin{aligned}
		\begin{multlined}[t]
			\Mfrakm(t)\bfmu + \Nfrakn\left(\boldsymbol{\xi}_0 +t \boldsymbol{\xi}_1+1*1*\bfmu\right)+ \mathbb{K}\,\frakK *(1*\bfmu+\bfxi_1)+ \Lfrakl(t)(1*\bfmu+\bfxi_1) =\boldsymbol{f}. \end{multlined}
	\end{aligned}
\end{equation}
Note that $\Mfrakm \in L^\infty(0,T)$ is positive definite due to assumption \eqref{nondegeneracy assumption frakm frakn} on $\frakm$. Thanks to this, this matrix equation can be seen as a matrix Volterra integral equation of second kind: 
\begin{equation}
	\begin{aligned}
		\bfmu+ \boldsymbol{K} * \bfmu = \boldsymbol{\tilde{f}},
	\end{aligned}
\end{equation}
with
\[
\boldsymbol{K}= \Mfrakm^{-1}(t)\big(\Nfrakn 1*1 + \mathbb{K}\,\frakK *1+\Lfrakl(t)1\big)
\]
and
\[
\boldsymbol{\tilde{f}} = \Mfrakm^{-1}(t)\big(-\Nfrakn \left(\boldsymbol{\xi}_0 +t \boldsymbol{\xi}_1\right)- \mathbb{K}\,\frakK *\bfxi_1-\Lfrakl(t) \bfxi_1+\boldsymbol{f}(t)\big).
\]
The existence theory for systems of Volterra integral equations of the second kind~\cite[Ch.\ 2, Theorem 4.5]{gripenberg1990volterra} yields a unique solution $\bfmu \in L^\infty(0,T)$. From $\bfxi_{tt}=\bfmu$, we then conclude that there exists a unique $\bfxi \in W^{2, \infty}(0,T)$ and thus $\pn \in W^{2, \infty}(0,T; V_n)$. \\
\indent In the next step of the proof, we need a uniform bound on $\pn$. To this end, testing is conducted using the test functions $\Delta^2 \pt^n$, $- \Delta \ptt^n$, and $\pntt$ as in the analysis of the problem with $\frakn=1$ in~\cite[Proposition 3.1]{kaltenbacher2024limiting}. The two main differences compared to~\cite{kaltenbacher2024limiting} lie in handling the additional terms arising from the fact that $\frakn \neq $const.\ and in introducing the time weights when estimating the $\frakm$ and $\frakn$ terms. This approach leads to the following bound on the approximate pressure:
	\begin{equation} \label{est pn}
	\begin{aligned}
		\|\pn\|_{\mathcal{X}_p} 	\lesssim&\, \begin{multlined}[t]  \left(\|p_0\|_{\Hthree}+\| p_1\|_{\Htwo} 
			+ \|\fp\|_{H^1(\Hone)} \right)   \exp \left(C(T)\Lambda_1(\frakm, \frakl, \frakn)\right), \end{multlined}
	\end{aligned}
\end{equation}
with the function $\Lambda_1$ defined in \eqref{def Lambda1}.  For completeness, we present the detailed derivation of \eqref{est pn} in Appendix~\ref{Appendix: linear pressure estimates}. The $n$-uniform bounds in turn allows us to extract a weakly convergent subsequence of $\{\pn\}_{n \geq 1}$ whose limit can be shown to be a solution of the original problem using standard arguments; see, e.g.,~\cite{evansPartialDifferentialEquations2010}.\\
\indent  Uniqueness follows by proving that the only solution of the homogeneous problem (that is, with $\fp=0$ and $p_0=p_1=0$) is zero. This can be shown by testing this problem with $\pt$. Note that by assumption \eqref{assumption coercivity kernel}, we have $\intt (\frakK*\nabla \pt, \nabla \pt) \geq 0$. Therefore, testing
\[
\frakm(x,t)\ptt - \frakn(x,t) \Delta p - \frakK* \Delta \pt -\frakl \pt =0
\]
with $\pt$, and using the fact that $p\vert_{t=0} = p_t \vert_{t=0}=0$ in the homogeneous problem yields
\begin{equation} \label{est uniqueness}
	\begin{aligned}
		\|\sqrt{\frakm} \pt(t)\|^2_{\Ltwo} + \|\sqrt{\frakn} \nabla p(t)\|^2_{\Ltwo}
		\leq\, &\, \frac12 \inttO \frakm_t \pt^2 \dxs - \frac12 \inttO \nabla \frakn \cdot (\nabla p) \pt \dxs\\
		& + \frac12 \inttO\frakn_t |\nabla p|^2 \dxs + \inttO \frakl \pt^2 \dxs.
	\end{aligned}
\end{equation}
Next, on account of the assumed regularity of $\frakm$, we have
	\begin{equation} \label{est frakmt}
	\begin{aligned}
		\frac12 \int_0^t (\frakm_t \pt,  \pt)_{L^2}\ds\,
		\leq\,&\, \frac12 \intt \|\frakm_t\|_{\Linf}  \| \pt\|^2_{\Ltwo} \ds\\
		\leq\,&\, \frac12\intt \frac{1}{\sqrt{s}} \|\sqrt{s}\frakm_t\|_{\Linf}  \| \pt\|^2_{\Ltwo} \ds\\
		\leq\,&\, \frac12\|\sqrt{t}\frakm_t\|_{\LinfLinf} \intt \frac{1}{\sqrt{s}}   \| \pt\|^2_{\Ltwo} \ds.
	\end{aligned}
\end{equation} 
Note that $s^{-1/2} \in L^1(0,t)$ since $\intt \frac{1}{\sqrt{s}}  \ds = 2 \sqrt{t} \leq 2 \sqrt{T}$. Similarly, we have 
\[
 \displaystyle \frac12 \inttO\frakn_t |\nabla p|^2 \dxs \leq \frac12 \|\sqrt{t}\frakn_t\|_{\LinfLinf} \intt \frac{1}{\sqrt{s}}   \| \nabla p\|^2_{\Ltwo} \ds.
 \]
The remaining terms on the right-hand side of \eqref{est uniqueness} can be bounded as follows:
\begin{equation}
	\begin{aligned}
&\frac12 \inttO \nabla \frakn \cdot (\nabla p) \pt \dxs +	\inttO \frakl \pt^2 \dxs\\
 \leq \,&\,  \|\nabla \frakn\|_{\LinfLinf} \Big(\intt \|\nabla p\|^2_{\Ltwo}\ds+\intt \|\pt\|^2_{\Ltwo}\ds\Big)+ \|\frakl\|_{\LinfLinf}\intt \|\pt\|^2_{\Ltwo}\ds.
	\end{aligned}
\end{equation}
Employing these estimates in  \eqref{est uniqueness} together with \Gronwall's inequality yields 
\[
	\| \pt\|^2_{\LinfLtwo} + \|\nabla p\|^2_{\LinfLtwo} \leq 0,
\]
from which we obtain that $p=0$.
\end{proof}   

%
	
	We next  consider the linearized initial-boundary value problem for the temperature: 
\begin{equation}\label{linearized Pennes problem}
	\left\{	\begin{aligned}
		&\theta_t -\kappa \Delta \theta+\nu\wbstar(x,t) \theta = f \quad \text{ in } \Omega \times (0,T),\\  
		&\theta\vert_{\partial \Om}=0, \\
		&\theta \vert_{t=0}=\theta_0,
	\end{aligned} \right.
\end{equation} 
and wish to establish its well-posedness with sufficient regularity of $\theta$ to be able to later ``close" the fixed-point argument in the analysis of the Westervelt--Pennes system. In view of   the assumptions made on the functions $\frakm$, $\frakn$, and $\frakl$ in Proposition~\ref{prop: linear pressure higher order} and the fact that they can be seen as placeholders for
	\begin{align}\label{functions:m:n:l}
	\frakm = \frac{1}{b(\theta+\Thetaa)}\left(1-2 k(\theta+\Thetaa) p\right), \quad  \frakn= \frac{q(\theta+\Thetaa)}{b(\theta+\Thetaa)}, \quad \frakl = 2\frac{k(\theta+\Thetaa)}{b(\theta+\Thetaa)}\pt,
	\end{align}
we know that we need a solution of \eqref{linearized Pennes problem}, such that (at least) $\theta \in \LinfTHonethree$ and $\sqrt{t}\thetat \in \LinfTHonetwo$.  This regularity is guaranteed by the next result. 

\begin{proposition} \label{prop: linear temperature higher order}
	Let $\kappa>0$. Given $T>0$, assume  that  \[
	 \wbstar \in  \HoneTHtwo, \quad f \in \HoneTHonezero, \quad \theta_0 \in \Honethree.
	\]
	Then initial boundary-value problem \eqref{linearized Pennes problem} has a unique solution $\theta \in \Xtheta$, where $\Xtheta$ is defined in \eqref{def_Xtheta}.
	The solution satisfies
	\begin{equation}\label{lin_est_Pennes}
		\begin{aligned}
			\|\theta\|_{\mathcal{X}_\theta} 
			\lesssim&\,\begin{multlined}[t] \left(\|\theta_0\|_{\Hthree}+ \|f\|_{\HoneHone}\right)
				 \exp(C(T) \Lambda_2(T))
			\end{multlined}
		\end{aligned}
	\end{equation} 
	with
	\begin{equation}\label{def Lambda2}
		\Lambda_2(T) = \intT \Big(1+ \|\wbstar(t)\|^2_{\Htwo}+\|\wbstart(t)\|^2_{\Htwo}\Big) \dt.
	\end{equation}
\end{proposition}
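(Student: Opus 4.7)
The plan is to use a Faedo--Galerkin discretization in the basis $\{v_i\}_{i\geq 1}$ of Dirichlet--Laplacian eigenfunctions, the same basis used for \eqref{ibvp_linWest}, so that the spaces $V_n = \mathrm{span}\{v_1,\dots,v_n\}$ are invariant under $\Delta$ and compatible with testing against powers of $-\Delta$ applied to $\theta^n$. The semi-discrete equation for $\theta^n$ is a linear non-autonomous ODE in time, hence globally solvable on $[0,T]$ by standard theory. The substance of the proof lies in establishing $n$-uniform bounds in $\Xtheta$, which I would build up via a hierarchy of energy estimates, and then passing to the limit by the usual weak/weak-$\ast$ compactness and Aubin--Lions arguments.

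The hierarchy I have in mind is: first, testing with $\theta^n$ yields the $\LinfTLtwo \cap \LtwoTHone$ bound, absorbing $\nu\wbstar\theta^n$ through $\|\wbstar\|_{\LinfTLinf}$ and \Gronwall's inequality (Lemma~\ref{lemma: Gronwall}); testing with $-\Delta\theta^n$ promotes this to $\LinfTHone \cap \LtwoTHonetwo$; then, after formally differentiating the Galerkin equation in time, tests with $\theta^n_t$ and $-\Delta\theta^n_t$ yield $\theta^n_t \in \LinfTHone \cap \LtwoTHonetwo$, with the initial values $\theta^n_t(0)$ and $\nabla\theta^n_t(0)$ extracted from the equation at $t=0$ using $\theta_0\in\Honethree$ and $f(0)\in \Hone$ (the latter from $f\in H^1(0,T;\Honezero)$). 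Finally, rewriting the PDE as $-\kappa\Delta\theta^n = f - \theta^n_t - \nu\wbstar\theta^n$ with the right-hand side in $L^\infty(0,T;\Hone)$, elliptic regularity supplies $\theta^n \in \LinfTHonethree$. At every level, perturbation terms involving $\wbstar$ and its time derivative are controlled by $\wbstar \in H^1(0,T;\Htwo)\hookrightarrow C([0,T];\Htwo)$ and $\Htwo\hookrightarrow \Linf$, and the \Gronwall factor accumulates into $\exp(C(T)\Lambda_2(T))$.

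The step I anticipate to be the main obstacle is the time-weighted bound $\sqrt{t}\,\theta^n_t \in \LinfTHonetwo$. A direct test of the time-differentiated equation with $\Delta^2\theta^n_t$ would require $\Delta\theta^n_t(0)\in\Ltwo$, i.e.\ $\theta_0\in H^4(\Omega)$, which is one derivative beyond what we have. To bypass this, I would instead test with $-2t\,\Delta^2\theta^n_t$, which is admissible since $\Delta v_i$ vanishes on $\partial\Omega$. After integration by parts this produces
\begin{equation*}
\ddt\bigl(t\,\|\Delta\theta^n_t\|_{\Ltwo}^2\bigr) + 2\kappa\,t\,\|\nabla\Delta\theta^n_t\|_{\Ltwo}^2 = \|\Delta\theta^n_t\|_{\Ltwo}^2 + 2t\,\prodLtwo{f_t - \nu\wbstart\theta^n - \nu\wbstar\theta^n_t}{-\Delta^2\theta^n_t}.
\end{equation*}
The left-hand side starts from zero at $t=0$, removing the need for any compatibility condition on $\theta_0$; the slack term $\int_0^t\|\Delta\theta^n_t\|_{\Ltwo}^2\ds$ is already controlled by the previous level; and the inner product on the right is handled by one further integration by parts (moving a Laplacian back onto $f_t$, $\wbstart\theta^n$, and $\wbstar\theta^n_t$), followed by Young's inequality and absorption of a fraction of $\kappa t\,\|\nabla\Delta\theta^n_t\|_{\Ltwo}^2$ on the left. \Gronwall against an integrand of size $1+\|\wbstar\|_{\Htwo}^2+\|\wbstart\|_{\Htwo}^2$ then closes the bound with exactly the factor $\Lambda_2(T)$ of \eqref{def Lambda2}.

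Uniqueness follows from linearity: the difference of two solutions satisfies \eqref{linearized Pennes problem} with zero data, and testing with $\theta$ combined with \Gronwall forces $\theta\equiv 0$. Passing to the limit $n\to\infty$ on the Galerkin identity and transferring \eqref{lin_est_Pennes} by weak lower semicontinuity completes the existence proof.
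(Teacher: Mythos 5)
Your proposal is correct and its core coincides with the paper's proof: both hinge on testing the time-differentiated Galerkin equation with the time-weighted function $t\,\Delta^2\theta^n_t$ (the paper uses $s\Delta^2\theta_t(s)$ integrated over $(0,t)$), precisely so that the weight vanishes at $t=0$ and no $H^4$-type compatibility on $\theta_0$ is required; the slack term $\int_0^t\|\Delta\theta_t\|^2_{\Ltwo}\,ds$ and the $\wbstar$, $\wbstart$ contributions are then handled as you indicate, and \Gronwall\ produces exactly the factor $\exp(C(T)\Lambda_2(T))$. The only genuine difference lies in how the complementary, unweighted bounds are obtained: the paper additionally tests the \emph{undifferentiated} equation with $\Delta^2\theta_t$, which directly yields $\theta_t\in L^2(\Htwo)$ and $\nabla\Delta\theta\in L^\infty(\Ltwo)$ (integrating the $f$ term by parts in time), whereas you run a hierarchy on the time-differentiated equation (tests with $\theta^n_t$ and $-\Delta\theta^n_t$, using the compatibility value $\theta^n_t(0)$ read off from the equation, for which $\theta_0\in\Honethree$ and $f\in H^1(0,T;\Honezero)\hookrightarrow C([0,T];\Honezero)$ suffice) and then recover $\theta\in\LinfTHonethree$ by elliptic regularity; since you work in the spectral eigenfunction basis, the $L^2$-projection commutes with $\Delta$, so this elliptic step is legitimate already at the semi-discrete level, and both routes need the same data regularity and give the same constant structure. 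Two small slips to fix: in your displayed identity the right-hand inner product should be against $+\Delta^2\theta^n_t$ (testing with $\pm 2t\,\Delta^2\theta^n_t$ leads, after rearrangement, to the same identity), and you cannot move a full Laplacian onto $f_t$, which lies only in $\Honezero$; a single integration by parts, giving $-\bigl(\nabla f_t,\nabla\Delta\theta^n_t\bigr)_{L^2}$ followed by Young's inequality and absorption into $\kappa t\|\nabla\Delta\theta^n_t\|^2_{\Ltwo}$, is what is actually needed — and is what your absorption step implicitly does, in agreement with the paper.
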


\begin{proof}

The proof can be rigorously justified using a Faedo--Galerkin approach; we present here only the energy estimate as it is the non-standard part of the proof. Testing the time differentiated (semi-discrete) Pennes equation with $s\Delta^2 \theta_{t}(s)$ for $s \in (0,t)$ and integrating over $(0,t)$ for $t \in (0,T)$ yields
\begin{equation}
	\begin{aligned}
		&\frac12 \|\sqrt{t}\Delta \thetat(t)\|^2_{\Ltwo}  + \kappa \intt \|\sqrt{s}\nabla \Delta \thetat(s)\|^2_{\Ltwo} \ds \\
		=\,&\,\frac{1}{2}\intt\|\Delta \thetat(s)\|^2_{\Ltwo}\ds - \nu\inttO s\Delta (\wbstar \theta)_t \Delta \thetat \dxs + \inttO s\ft \Delta^2 \thetat \dxs.
	\end{aligned}
\end{equation}
We can treat the $\ft$ term using Young's inequality as follows:
\begin{equation}
	\begin{aligned}
		\inttO s \ft \Delta^2 \thetat \dxs =\,&\, - \inttO s\nabla \ft \cdot \nabla \Delta \thetat \dxs \\
		\lesssim\,&\,  T\|\nabla \ft\|^2_{\LtwoLtwo}+ \eps \intt \|\sqrt{s}\nabla \Delta \thetat\|^2_{\Ltwo} \ds
	\end{aligned}
\end{equation}
for any $\eps>0$. To estimate the $\nu$ term, we note that the following identity holds:
\begin{equation}
	\begin{aligned}
\Delta (\wbstart \theta+ \wbstar \thetat)
	=	\Delta \wbstart \theta+2 \nabla \wbstar\cdot \nabla \theta+ \wbstart \Delta \theta 
		+\Delta\wbstar \thetat+2 \nabla \wbstar \cdot \nabla \thetat+\wbstar\Delta \thetat .
	\end{aligned}
\end{equation}
Employing H\"older's inequality together with elliptic regularity yields
\begin{equation}
	\begin{aligned}
		&\inttO {s}\Delta (\wbstart \theta+ \wbstar \thetat) \Delta \thetat \dxs \\
		\lesssim\,&\, \intt \sqrt{s} \|\wbstart\|_{\Htwo} \|\Delta \theta\|_{\Ltwo}\|\sqrt{s}\Delta \thetat\|_{\Ltwo} \ds +\intt \|\wbstar\|_{\Htwo}\|\sqrt{s}\Delta \thetat\|^2_{\Ltwo} \ds\\
		\lesssim\,&\,T \intt  \|\wbstart\|^2_{\Htwo} \|\Delta \theta\|^2_{\Ltwo} \ds +\intt \big(1+\|\wbstar\|_{\Htwo}\big)\|\sqrt{s}\Delta \thetat\|^2_{\Ltwo} \ds.
	\end{aligned}
\end{equation}
Thus we are led to the following estimate:
\begin{equation} \label{est theta lin 1}
	\begin{aligned}
	&\frac12 \|\sqrt{t}\Delta \thetat(t)\|^2_{\Ltwo}  + \kappa \intt \|\sqrt{s}\nabla \Delta \thetat(s)\|^2_{\Ltwo} \ds \\
	\lesssim&\, \begin{multlined}[t]
\intt\|\Delta \thetat\|^2_{\Ltwo}\ds+	T \intt  \|\wbstart\|^2_{\Htwo} \|\Delta \theta\|^2_{\Ltwo} \ds\\ +\intt \big(1+\|\wbstar\|_{\Htwo}\big)\|\sqrt{s}\Delta \thetat\|^2_{\Ltwo} \ds
	+ T\|\nabla \ft\|^2_{\LtwoLtwo}\\
	+ \eps \intt \|\sqrt{s}\nabla \Delta \thetat\|^2_{\Ltwo} \ds.
	\end{multlined}
	\end{aligned}
\end{equation} 
Since we have established that for the later fixed-point analysis, we need $\theta \in \LinfTHonethree$, we additionally test the semi-discrete Pennes problem with $\Delta^2 \thetat$, which together with integration by parts leads to
\begin{equation}
	\begin{aligned}
		&\intt \|\Delta \thetat\|^2_{\Ltwo} \dxs +\frac{\kappa}{2} \|\nabla \Delta \theta(s)\|^2_{\Ltwo} \Big \vert_0^t\\
		\leq&\, \inttO \Delta(\wbstar \theta) \Delta \thetat \dxs + \inttO f \Delta^2 \thetat \dxs  \\
			=&\, \inttO \Delta(\wbstar \theta) \Delta \thetat \dxs - \intO \nabla f \cdot \nabla \Delta \theta \dx \Big \vert_0^t + \intt \nabla \ft \cdot \nabla \Delta \theta \dxs .
	\end{aligned}
\end{equation} 
Employing once again H\"older's inequality yields
\begin{equation}
	\begin{aligned}
		&\intt \|\Delta \thetat\|^2_{\Ltwo} \dxs +\frac{\kappa}{2} \|\nabla \Delta \theta(s)\|^2_{\Ltwo} \Big \vert_0^t\\
		\lesssim&\, \intt \|\wbstar\|_{\Htwo}\|\Delta \theta\|_{\Ltwo} \|\Delta \thetat\|_{\Ltwo} \ds - \intO \nabla f \cdot \nabla \Delta \theta \dx \Big \vert_0^t + \intt \nabla \ft \cdot \nabla \Delta \theta \dxs.  	
	\end{aligned}
\end{equation} 
From here we thus have
\begin{equation} \label{est theta lin 2}
	\begin{aligned}
		&\intt \|\Delta \thetat\|^2_{\Ltwo} \dxs +\frac{\kappa}{2} \|\nabla \Delta \theta(s)\|^2_{\Ltwo} \Big \vert_0^t\\
			\lesssim&\, \begin{multlined}[t]\intt  \|\wbstar\|_{\Htwo}^2\|\Delta \theta\|^2_{\Ltwo}\ds+ \eps \intt \|\Delta \thetat\|^2_{\Ltwo} \ds +\eps\|\nabla \Delta\theta(t)\|^2_{\Ltwo}\\
				+\|\nabla \Delta\thetazero\|^2_{\Ltwo} + \intt \|\nabla \Delta \theta\|^2_{\Ltwo} \ds + \|f\|^2_{\HoneHone},
				\end{multlined}
	\end{aligned}
\end{equation}
for any $\eps>0$, where we have relied on the embedding $H^1(0,T; \Honez) \hookrightarrow C([0,T]; \Honez)$ to bound $\|f(s)\|_{\Hone}$ for $s \in [0,t]$. \\
\indent Adding the two estimates \eqref{est theta lin 1} and \eqref{est theta lin 2}, fixing $\eps>0$ small enough so that the $\eps$ terms can be absorbed by the left-hand side, and applying \Gronwall's inequality then yields \eqref{lin_est_Pennes}. 
\end{proof}
\indent	We can now consider a more concrete right-hand side in \eqref{linearized Pennes problem}. Let $p \in \Xp$ and assume that the right hand-side in the heat equation has the special form $f= \calG+\ftheta$ dictated by our model with 
	\begin{equation}\label{calG linearized}
		\calG (\pt)= \frakG(x,t) p_t^2,
	\end{equation}
	where we recall that $\frakG$ acts as a placeholder for $\frakG = \zeta \dfrac{b(\theta+\Thetaa)}{q^2(\theta+\Thetaa)}$.
\noindent If $\frakG$ is sufficiently smooth, we have
	\begin{equation} \label{est_calG H1H1}
		\begin{aligned}
			\|\calG(\pt)\|_{\HoneHone} \lesssim\,&\,
			\|\nabla(\calG(\pt))\|_{L^2(\Ltwo)} +	\|\nabla(\calG(\pt))_t\|_{L^2(\Ltwo)}.
		\end{aligned} 
	\end{equation} 
We can estimate the two contributions on the right-hand side above as follows:
		\begin{equation} \label{est_calG L2H1}
		\begin{aligned}
			\|\nabla(\calG(\pt))\|_{L^2(\Ltwo)}
			\lesssim&\, \begin{multlined}[t]
			\left \|\frakG \right \|_{\LinfLthree}\| \pt\|_{\LinfLinf}\|\nabla \pt\|_{\LtwoLsix}\\
				+\left\|\nabla \frakG \right\|_{\LtwoLtwo} \|\pt\|^2_{\LinfLinf}.
				 \end{multlined}
		\end{aligned} 
	\end{equation} 
	For the second-one, we note that
	\begin{equation}
		\begin{aligned}
				\nabla(\calG(\pt))_t
				=\,&\, \nabla\big(\frakG_t \pt^2+ 2 \frakG \pt \ptt \big) \\
				=\,&\, \pt^2 \nabla \frakG_t+ 2\frakG_t \pt \nabla \pt+ 2 \nabla \frakG \pt \ptt+2\frakG \nabla \pt \ptt +2 \frakG \pt \nabla \ptt.
		\end{aligned}
	\end{equation}
	Thus, by H\"older's inequality, we have the following bound:
	\begin{equation}
		\begin{aligned}
		&\|\nabla(\calG(\pt))_t\|_{L^2(\Ltwo)}\\
		\lesssim&\, \begin{multlined}[t]\|\pt\|_{\LinfLinf}^2 \|\nabla \frakG_t\|_{\LtwoLtwo}+ \|\frakG_t\|_{\LtwoLthree}\|\pt\|_{\LinfLinf}\|\nabla \pt\|_{\LinfLsix} \\
			+\|\frakG\|_{\LinfLinf}\|\nabla \pt\|_{\LinfLfour}\|\ptt\|_{\LtwoLfour}\\
			+\|\frakG\|_{\LinfLinf}\|\pt\|_{\LinfLinf}\|\nabla \ptt\|_{\LtwoLtwo}
			\end{multlined}\\
			\lesssim&\, \big(\|\frakG\|_{\HoneHone}+\|\frakG\|_{\LinfLinf}\big)\|p\|^2_{\Xp}.
		\end{aligned}
	\end{equation}
By combining this estimate with the bound in \eqref{lin_est_Pennes}, we arrive at the following corollary of Proposition~\ref{prop: linear temperature higher order}.

\begin{corollary} \label{corollary: Pennes}
	Let the assumptions of Proposition~\ref{prop: linear temperature higher order} hold and let $p \in \Xp$ be the solution of \eqref{ibvp_linWest}. Let additionally 
	\begin{equation} \label{frakG reg assumption}
	\frakG \in \LinfTLinf \cap H^1(0,T; H_0^1(\Omega))
	\end{equation}
and $\ftheta \in \HoneTHonezero$. Then the solution of \eqref{linearized Pennes problem} with $f=\frakG \pt^2+\ftheta$ satisfies 
		\begin{equation}\label{lin_est_Pennes p}
		\begin{aligned}
			\|\theta\|_{\mathcal{X}_\theta} 
			\lesssim&\,\begin{multlined}[t] \left(\|\theta_0\|_{\Hthree}+ (\|\frakG\|_{\LinfLinf}+\|\frakG\|_{\HoneHone})\|\pt\|^2_{\Xp}+ \|\ftheta\|_{\HoneHone}\right) \\ \times
				\exp(C(T)\Lambda_2(T)),
			\end{multlined}
		\end{aligned}
	\end{equation}
where $\Lambda_2$ is defined in \eqref{def Lambda2}.	
\end{corollary}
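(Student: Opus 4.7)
The plan is to realize this corollary as a direct application of Proposition~\ref{prop: linear temperature higher order} with the specific source $f = \frakG\,\pt^2 + \ftheta$. Since the proposition's bound \eqref{lin_est_Pennes} is linear in $\|f\|_{\HoneHone}$, the task reduces to verifying that $f \in \HoneTHonezero$ and controlling $\|f\|_{\HoneHone}$ by $(\|\frakG\|_{\LinfLinf}+\|\frakG\|_{\HoneHone})\|p\|^2_{\Xp} + \|\ftheta\|_{\HoneHone}$.

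First, by the triangle inequality, $\|f\|_{\HoneHone} \leq \|\calG(\pt)\|_{\HoneHone} + \|\ftheta\|_{\HoneHone}$; the second summand already appears in the target estimate. For the first summand I would invoke the product-rule decomposition carried out in the paragraph preceding the corollary, namely the splitting $\|\calG(\pt)\|_{\HoneHone} \lesssim \|\nabla(\calG(\pt))\|_{\LtwoLtwo} + \|\nabla(\calG(\pt))_t\|_{\LtwoLtwo}$ from \eqref{est_calG H1H1} together with the bound for the first of these two in \eqref{est_calG L2H1} and the analogous H\"older bound for $\nabla(\calG(\pt))_t = \pt^2\nabla\frakG_t + 2\frakG_t\pt\nabla\pt + 2\nabla\frakG\,\pt\ptt + 2\frakG\,\nabla\pt\,\ptt + 2\frakG\,\pt\,\nabla\ptt$. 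What remains is the routine step of translating each of the norms $\|\pt\|_{\LinfLinf}$, $\|\nabla\pt\|_{\LinfLsix}$, $\|\nabla\pt\|_{\LinfLfour}$, $\|\ptt\|_{\LtwoLfour}$, and $\|\nabla\ptt\|_{\LtwoLtwo}$ into $\|p\|_{\Xp}$, which follows from the definition \eqref{def_Xp} combined with the embeddings $\Htwo \hookrightarrow \Linf$, $\Hone \hookrightarrow \Lsix$, and $\Hone \hookrightarrow L^4(\Omega)$.

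With $\|f\|_{\HoneHone}$ controlled in this way, Proposition~\ref{prop: linear temperature higher order} then delivers $\theta \in \Xtheta$ together with the estimate \eqref{lin_est_Pennes}, and substituting the bound on $\|\calG(\pt)\|_{\HoneHone}$ produces \eqref{lin_est_Pennes p}. The only point requiring a moment of care is confirming that $\calG(\pt) = \frakG\,\pt^2$ truly takes values in $\Honezero$, so that the proposition's hypothesis on $f$ is met: this is automatic because by \eqref{frakG reg assumption} the factor $\frakG$ is $\Honezero$-valued in space, while $\pt^2 \in \LinfLinf$ via $\pt \in L^\infty(\Htwo) \hookrightarrow \LinfLinf$, so the product vanishes on $\partial\Omega$ and inherits the needed $H^1$-in-space regularity. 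I do not anticipate any genuine obstacle here, since the bulk of the analytic work is already performed by Proposition~\ref{prop: linear temperature higher order} and the product-rule bounds displayed in the text immediately preceding the corollary.
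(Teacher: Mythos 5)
Your proposal matches the paper's own argument: the corollary is obtained exactly by combining the product-rule bounds \eqref{est_calG H1H1}--\eqref{est_calG L2H1} and the estimate of $\nabla(\calG(\pt))_t$ (displayed immediately before the statement), which give $\|\calG(\pt)\|_{\HoneHone} \lesssim (\|\frakG\|_{\LinfLinf}+\|\frakG\|_{\HoneHone})\|p\|^2_{\Xp}$, with the linear estimate \eqref{lin_est_Pennes} of Proposition~\ref{prop: linear temperature higher order}. Your extra check that $\frakG\,\pt^2$ is $\Honezero$-valued is a harmless refinement of the same route, so the proof is correct and essentially identical to the paper's.
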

Having completed the linear pressure-temperature analysis, we can transform the problem of showing the existence of solutions to the Westervelt--Pennes system into a fixed-point problem for a suitably defined mapping, which we can then analyze using the Banach fixed-point theorem.

\subsection{Analysis of the Westervelt--Pennes system with nonlocal acoustic damping}  
 To state the well-posedness of the Westervelt--Pennes system, we introduce the solution space 
 \begin{equation} \label{def sol space WestPennes}
  \solspaceWestPennes=\mathcal{X}_p \times \mathcal{X}_\theta
 \end{equation}
for the pressure-temperature, where $\Xp$ is defined in \eqref{def_Xp} and $\Xtheta$ in \eqref{def_Xtheta}. Consider the mapping 
\begin{equation} \label{def mapping calT}
\mathcal{T}: \ball \ni (\pstar, \thetastar) \mapsto (p, \theta)
\end{equation}
that associates to each $(\pstar, \thetastar) \in \ball \subset \solspaceWestPennes$ the solution $(p, \theta) \in \solspaceWestPennes$ of the linear  problem 
\begin{equation} \label{linearized problem fixed point}
	\left\{
	\begin{aligned}
		& \frakm \ptt-\frakn \Delta p - \frakK* \Delta \pt -\frakl \pt=\fp \ &&\text{in} \ \Omega \times (0,T), \\[1mm]
		& \theta_t -\kappa\Delta \theta +\nu \wbstar \theta = \calG(p_t)+\ftheta \ &&\text{in} \ \Omega \times (0,T), \\[1mm]
	\end{aligned} 
	\right.
\end{equation}
with boundary and initial conditions given in \eqref{coupled_problem_BC_WP} and \eqref{coupled_problem_IC_WP}, respectively, and the variable coefficients chosen as
\begin{equation} \label{def frakm frakn frakl}
\begin{aligned}	
&\frakm = \frac{1}{\bstar}\left(1-2 \kstar \pstar\right), \quad  \frakn= \frac{\qstar}{\bstar}, \quad \frakl = 2\frac{\kstar}{\bstar}\pt.
\end{aligned}
\end{equation}
Above, we have introduced the short-hand notation
\begin{equation} \label{def qstar bstar kstar}
\qstar = q(\thetastar+\Thetaa), \quad \bstar = b(\thetastar+\Thetaa), \quad \kstar = k(\thetastar+\Thetaa),
\end{equation}	
and, in the heat equation,
\begin{equation} \label{def calG wbstar}
\begin{aligned}	
	& \calG = \frakG\pt^2,  \qquad  \frakG= \zeta\frac{\bstar}{\qstar}, \qquad \wbstar = \wb(\thetastar+\Thetaa).
\end{aligned}
\end{equation}
The ball in \eqref{def mapping calT} is defined as follows: 
\begin{equation} \label{def ball}
	\begin{aligned}
		\ball= \big\{(\pstar, \thetastar)\in \solspaceWestPennes:&\,\|\pstar\|_{\mathcal{X}_p} \leq \Rp,\quad \|\thetastar\|_{\Xtheta} \leq \Rtheta, \\
		& (\pstar, \pstart,  \thetastar) \vert_{t=0}=(p_0, p_1, \theta_0) \big\},
	\end{aligned}
\end{equation}
where the radii $\Rp \in (0, \bRp]$ and $\Rtheta \in (0, \bRtheta]$ should be small enough, as required by the well-posedness analysis below.

\begin{theorem} \label{thm: West Pennes}
	Let $T>0$. Let Assumption~\ref{assumptions kernel} on the memory kernel $\frakK$ and Assumption~\ref{assumptions: q b k wb calG} on the functions $q$, $b$, $\wb$, and $\calG$ hold. Let $\fp$, $\ftheta \in \HoneTHonezero$. Additionally, assume that
	\begin{equation}
		(\pzero, p_1) \in \Honethree \times \left(\Honetwo\right), \ \thetazero \in \Honethree.
	\end{equation}
	There exist $\deltap=\deltap(T)>0$ and $\deltatheta=\deltatheta(T)>0$  small enough such that if
		\begin{equation}
				\begin{aligned}
						&	\|\pzero\|_{\Hthree} + \|p_1\|_{\Htwo} +\|\fp\|_{\HoneHone}\leq \deltap,\\
						&		\|\thetazero\|_{\Hthree} + \|\ftheta\|_{\HoneHone}\leq \deltatheta,
					\end{aligned}
			\end{equation}
	then the nonlocal Westervelt--Pennes system \eqref{coupled_problem West Pennes}, \eqref{coupled_problem_BC_WP}, \eqref{coupled_problem_IC_WP} has a unique solution in $\solspaceWestPennes$, where $\solspaceWestPennes$ is defined in \eqref{def sol space WestPennes}. The solution satisfies the following bound:
	\begin{equation}
		\begin{aligned}
		\|(p, \theta)\|_{\solspaceWestPennes} \lesssim\,&\, \begin{multlined}[t]
			\|\pzero\|_{\Hthree} + \|p_1\|_{\Htwo} +\|\thetazero\|_{\Hthree} + \|\ftheta\|_{\HoneHone}+\|\fp\|_{\HoneHone}.
		\end{multlined}
		\end{aligned}
	\end{equation}
\end{theorem}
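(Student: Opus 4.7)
The strategy is to apply the Banach fixed-point theorem to the mapping $\mathcal{T}$ defined in \eqref{def mapping calT} on the ball $\ball$ from \eqref{def ball}, with radii $\Rp, \Rtheta$ and data sizes $\deltap, \deltatheta$ tuned appropriately small. The proof splits naturally into three stages: (i) verifying non-degeneracy and regularity of the variable coefficients $\frakm,\frakn,\frakl,\frakG,\wbstar$ for $(\pstar,\thetastar)\in \ball$, (ii) showing that $\mathcal{T}$ is a self-mapping on $\ball$, and (iii) showing that $\mathcal{T}$ is a strict contraction in a suitable metric.

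For stage (i), I would use the embeddings $\Htwo\hookrightarrow\Linf$ and $\Hthree\hookrightarrow W^{1,\infty}(\Omega)$ together with Assumption~\ref{assumptions: q b k wb calG}. Since $b(\Thetaa),q(\Thetaa)>0$ and $b,q,k\in \Cthreeoneloc$, choosing $\Rp,\Rtheta$ small enough guarantees
\[
 b(\thetastar+\Thetaa)\geq \tfrac12 b(\Thetaa), \qquad q(\thetastar+\Thetaa)\geq \tfrac12 q(\Thetaa), \qquad 1-2k(\thetastar+\Thetaa)\pstar\geq \tfrac12
\]
uniformly on $\Omega\times[0,T]$, which delivers the non-degeneracy assumption \eqref{nondegeneracy assumption frakm frakn}. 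Regarding regularity, the $\LinfTHonethree$ bound on $\pstar$ and $\thetastar$ combined with an algebra/chain-rule argument in $\Honethree$ yields $\frakm,\frakn\in L^\infty(0,T;\Honethree)$, while the temporal derivatives involve $\pstart\in L^\infty(\Htwo)\hookrightarrow L^\infty(\Linf)$ and $\thetastart$, which is only in $L^2(\Htwo)$. The crucial point is that the time-weighted regularity $\sqrt{t}\,\thetastart\in L^\infty(\Htwo)\hookrightarrow L^\infty(\Linf)$ built into $\Xtheta$ delivers exactly the required $\sqrt{t}\frakm_t,\sqrt{t}\frakn_t\in \LinfTLinf$ hypothesis of Proposition~\ref{prop: linear pressure higher order}. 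Similarly, $\frakG$ satisfies \eqref{frakG reg assumption} and $\wbstar \in H^1(0,T;\Htwo)$, so Corollary~\ref{corollary: Pennes} applies.

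For stage (ii), I would apply Proposition~\ref{prop: linear pressure higher order} to obtain
\[
\|p\|_{\Xp}\leq C_1\bigl(\|\pzero\|_{\Hthree}+\|p_1\|_{\Htwo}+\|\fp\|_{\HoneHone}\bigr) \exp\bigl(C(T)\Lambda_1(\frakm,\frakl,\frakn)\bigr),
\]
and then Corollary~\ref{corollary: Pennes} to obtain a bound on $\|\theta\|_{\Xtheta}$ in terms of $\|\thetazero\|_{\Hthree}$, $\|\ftheta\|_{\HoneHone}$ and $\|p\|_{\Xp}^2$. Using the non-degeneracy bounds from stage (i), the functions $\Lambda_1$ and $\Lambda_2$ can be bounded from above in terms of $\Rp,\Rtheta,T$. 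The key observation is that by first choosing $\Rp,\Rtheta\in(0,1]$ small (which bounds the exponential factors by a constant independent of $\Rp,\Rtheta$), and then choosing $\deltap,\deltatheta$ small enough relative to these fixed $\Rp,\Rtheta$, one forces $\|p\|_{\Xp}\leq \Rp$ and $\|\theta\|_{\Xtheta}\leq \Rtheta$; note that the temperature estimate picks up a $\|\pt\|_{\Xp}^2\lesssim \Rp^2$ term, whose smallness relative to $\Rtheta$ is compatible with the data smallness since one can take, e.g., $\deltatheta\sim \Rp^2$. The attainment of the initial data by $(p,\pt,\theta)$ is automatic from the linear theory.

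For stage (iii), given two data points $(\ponestar,\thetaonestar),(\ptwostar,\thetatwostar)\in \ball$ with images $(\pone,\thetaone),(\ptwo,\thetatwo)$, I would write the equations satisfied by the differences $\op=\pone-\ptwo$ and $\otheta=\thetaone-\thetatwo$ with zero initial data and with right-hand sides involving the coefficient differences $\frakm^{(1)}-\frakm^{(2)}$, $\frakn^{(1)}-\frakn^{(2)}$, $\frakl^{(1)}-\frakl^{(2)}$, $\wbstar^{(1)}-\wbstar^{(2)}$, $\frakG^{(1)}-\frakG^{(2)}$. By $\Coneoneloc$-regularity of $q,b,k,\wb$ and the Sobolev embeddings in dimension $d\le 3$, these coefficient differences are controlled in the relevant norms by $\|\opstar\|_{\Xp}+\|\othetastar\|_{\Xtheta}$ with a constant depending on $\Rp,\Rtheta$. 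I would then run the same energy procedure as in Proposition~\ref{prop: linear pressure higher order} and Corollary~\ref{corollary: Pennes} on the difference system, but in a lower-regularity metric, for example
\[
\|(\op,\otheta)\|_{\mathcal{Y}}^2 := \|\op\|_{\LinfHone}^2+\|\opt\|_{\LinftLtwo}^2+\|\frakK\!*\nabla\opt\|_{\LtwoLtwo}^2+\|\otheta\|_{\LinfLtwo}^2+\|\nabla\otheta\|_{\LtwoLtwo}^2,
\]
closed in this norm using \eqref{assumption coercivity kernel} and \Gronwall. The resulting estimate has the form $\|(\op,\otheta)\|_{\mathcal{Y}}\leq C(T)(\Rp+\Rtheta+\deltap+\deltatheta)\|(\opstar,\othetastar)\|_{\mathcal{Y}}$, which is a contraction after shrinking $\Rp,\Rtheta,\deltap,\deltatheta$ once more. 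The main technical obstacle will be precisely this contraction estimate: one must show that $\ball$ is closed under the metric induced by $\mathcal{Y}$ (done via weak-$\ast$ compactness in $\Xp\times\Xtheta$ and lower semi-continuity of the norms), and one must carefully track the cross-coupling between the difference of pressure and the difference of temperature through the $\calG$-source, using the smallness of $\Rp$ to absorb the quadratic $\pt$ contributions. Once contraction is established, Banach's fixed-point theorem provides a unique fixed point in $\ball$, which is the unique solution of \eqref{coupled_problem West Pennes}--\eqref{coupled_problem_IC_WP} in $\solspaceWestPennes$. The final a priori bound on $\|(p,\theta)\|_{\solspaceWestPennes}$ is then read off from the linear estimates applied at the fixed point.
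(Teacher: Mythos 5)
Your proposal follows essentially the same route as the paper: a Banach fixed-point argument on the ball $\ball$, with (i) non-degeneracy and regularity of $\frakm,\frakn,\frakl,\wbstar,\frakG$ verified via smallness of $\Rp,\Rtheta$ and the time-weighted bound $\sqrt{t}\,\thetastart\in L^\infty(\Htwo)$ (the paper's Lemmas~\ref{lemma: estimates bstar qstar}--\ref{lemma: est frakG}), (ii) self-mapping via Proposition~\ref{prop: linear pressure higher order} and Corollary~\ref{corollary: Pennes} with data smallness absorbing the exponential factors and the quadratic $\Rp^2$ source in the temperature bound, and (iii) contraction in the weaker norm, which is exactly the paper's $\Yp\times\Ytheta$ metric of Proposition~\ref{prop: contractivity}. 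Your explicit remark on closedness of $\ball$ in the weaker metric via weak-$\ast$ compactness is a standard detail the paper leaves implicit, and does not constitute a deviation.
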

~\\
We proceed through several steps to prove the theorem, in which we verify the conditions of the Banach fixed-point theorem. \\

\noindent \underline{Step I}: The ball $\ball$, defined in \eqref{def ball}, is non-empty.\\

	Let $\Cp$ and $\Ctheta$ be the hidden constants in the estimates  \eqref{est_p_linear_1}  and \eqref{lin_est_Pennes}, respectively. The space $\ball$ is non-empty because the solution of the linear problem in the particular case where 
	\begin{equation} \label{special choice alpha beta f g}
		\frakm=1,\quad   \frakn = 1,\quad \fp=\ftheta =0, \quad \wbstar =0,\quad \zeta =0
	\end{equation}
	belongs to it, provided that $\deltap$ and $\deltatheta$ are chosen correctly. 
	Indeed, the solution $(\pstar, \thetastar) \in \solspaceWestPennes$ of such a linear problem will belong to the ball $\ball$ provided that we select $\deltap$ such that
	\begin{equation} \label{est p simple}
		\begin{aligned}
	 		&\|\pstar\|_{\mathcal{X}_p} \leq\Cp \left(\|p_0\|_{\Hthree}+\| p_1\|_{\Htwo} 
			\right)   e^ {C(T)} \leq\Cp \deltap   e^ {C(T)}\leq \Rp.
		\end{aligned}
	\end{equation}
Furthermore, with the particular choice \eqref{special choice alpha beta f g} in the linear problem, by Proposition~\ref{prop: linear temperature higher order}, we have 
	\begin{equation}
		\begin{aligned}
			\|\thetastar\|_{\Xtheta} \leq& \,  \Ctheta \|\theta_0\|_{\Hthree}e^ {C(T)} \leq \Ctheta \deltatheta e^ {C(T)}.
		\end{aligned}
	\end{equation}
Thus by choosing $\deltatheta \leq (\Ctheta e^ {C(T)})^{-1}\Rtheta$, 	we have $\|\thetastar\|_{\Xtheta} \leq \Rtheta$.  We thus conclude that $(\pstar, \thetastar) \in \ball$.

~\\

\noindent \underline{Step II}: {Verifying the assumptions of Propositions~\ref{prop: linear pressure higher order} and~\ref{prop: linear temperature higher order}}\\ 
~\\
We wish to apply Propositions~\ref{prop: linear pressure higher order} and~\ref{prop: linear temperature higher order} to prove the self-mapping property of $\calT$. To this end, we first verify that the assumptions of these propositions are satisfied. 
We begin by establishing the properties of functions $\qstar$ and $\bstar$ defined in \eqref{def qstar bstar kstar}.
\begin{lemma}\label{lemma: estimates bstar qstar}
	Let the assumptions of Theorem~\ref{thm: West Pennes} hold and let $(\pstar, \thetastar) \in \ball$.  For small enough $\Rtheta>0$, there exist $\ulq$, $\olq$, $\ulb$, $\olb>0$, such that
	\begin{equation}\label{bound_b_q}
		0< \ulq \leq \qstar \leq \olq , \quad 0 < \ulb \leq \bstar \leq \olb \quad \text{in } \Omega \times (0,T). 
	\end{equation}
Furthermore, the following bounds hold:
\begin{equation} \label{bounds qstar bstar LinfH3}
	\begin{aligned}
				\|\qstar\|_{\LinfHthree} \lesssim&\, 1+ \Rtheta , \qquad && \|\sqrt{t} (\qstar)_t\|_{\LinfHtwo} \lesssim \Rtheta,\\
				\|\bstar\|_{\LinfHthree} \lesssim&\, 1+ \Rtheta , \qquad && \|\sqrt{t} (\bstar)_t\|_{\LinfHtwo} \lesssim \Rtheta.
	\end{aligned}
\end{equation}
\end{lemma}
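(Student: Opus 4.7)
The argument hinges on extracting an $\LinfTLinf$ control on $\thetastar$ from its $\Xtheta$ regularity. Since $\|\thetastar\|_{\LinfHthree} \leq \Rtheta$ and $\Hthree \hookrightarrow \Linf$ for $d\leq 3$, we obtain $\|\thetastar\|_{\LinfTLinf} \lesssim \Rtheta$, so $\thetastar+\Thetaa$ takes values in a small interval $I_\Rtheta$ collapsing to $\{\Thetaa\}$ as $\Rtheta \downarrow 0$. For the two-sided bounds \eqref{bound_b_q}, I would combine this with the continuity of $q,b$ and the positivity $q(\Thetaa),b(\Thetaa)>0$ from Assumption~\ref{assumptions: q b k wb calG}: for $\Rtheta$ small enough, $q(I_\Rtheta)\subset[\tfrac12 q(\Thetaa),2q(\Thetaa)]$ and $b(I_\Rtheta)\subset[\tfrac12 b(\Thetaa),2b(\Thetaa)]$, yielding the constants $\ulq,\olq,\ulb,\olb$.

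For the $\LinfHthree$ estimate on $\qstar$, the plan is to split
\[
\qstar = q(\Thetaa) + \pigl(q(\thetastar+\Thetaa) - q(\Thetaa)\pigr),
\]
so that the constant piece produces the ``$1$'' on the right-hand side. For the remainder, I would apply the chain rule up to order three; each resulting term is a product of a factor $q^{(j)}(\thetastar+\Thetaa)$, $j=1,2,3$, uniformly bounded in $\Linf$ thanks to the $C^{3,1}_{\textup{loc}}$ assumption on $q$ and the $\Linf$ control on $\thetastar$, with a polynomial in $\nabla\thetastar, \nabla^2\thetastar, \nabla^3\thetastar$. Applying H\"older's inequality together with the embeddings $\Htwo\hookrightarrow\Linf$ and $\Hone\hookrightarrow L^6$ produces a Moser-type bound $\|q(\thetastar+\Thetaa) - q(\Thetaa)\|_{\Hthree}\lesssim\|\thetastar\|_{\Hthree}\lesssim \Rtheta$, giving the first estimate in \eqref{bounds qstar bstar LinfH3}. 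The same reasoning, applied to $b$, handles $\bstar$.

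For the time-weighted bound, differentiation yields $(\qstar)_t = q'(\thetastar+\Thetaa)\,\thetastart$, so
\[
\sqrt{t}\,(\qstar)_t = q'(\thetastar+\Thetaa)\cdot \sqrt{t}\,\thetastart.
\]
An analogous chain-rule computation (now only up to second order) gives $\|q'(\thetastar+\Thetaa)\|_{\LinfHtwo}\lesssim 1+\Rtheta$, while the definition of $\ball$ yields $\|\sqrt{t}\thetastart\|_{\LinfHtwo}\leq \Rtheta$. Invoking the Banach-algebra property $\|fg\|_{\Htwo}\lesssim \|f\|_{\Htwo}\|g\|_{\Htwo}$ (valid in $d\leq 3$) then produces $\|\sqrt{t}(\qstar)_t\|_{\LinfHtwo}\lesssim (1+\Rtheta)\Rtheta \lesssim \Rtheta$, with the corresponding estimate for $\bstar$ obtained identically.

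The main technical obstacle is the careful bookkeeping in the $\Hthree$ composition estimate: one must verify that the Moser-type constants depend only on the local $C^{3,1}$ seminorms of $q,b$ on the fixed compact interval $I_{\bRtheta}$ and on $\|\thetastar\|_{\Hthree}$, with no singular dependence on $\Rtheta$. This is precisely what the $C^{3,1}_{\textup{loc}}$ regularity combined with the $\Linf$ control of $\thetastar$ from the first step provides; once this is in place, the remaining estimates follow from standard Sobolev product and embedding arguments.
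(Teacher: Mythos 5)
Your proposal is correct and follows essentially the same route as the paper: a continuity/Taylor argument around $\Thetaa$ for the two-sided bounds \eqref{bound_b_q}, a composition (Moser-type) estimate exploiting $q,b\in\Cthreeoneloc$ and $\|\thetastar\|_{\LinfLinf}\lesssim\Rtheta$ for the $\LinfHthree$ bounds, and the factorization $\sqrt{t}(\qstar)_t=q'(\thetastar+\Thetaa)\,\sqrt{t}\thetastart$ combined with the $\Htwo$ product estimate for the time-weighted bounds. The only difference is that you spell out the chain-rule bookkeeping that the paper compresses into a single line, which is a matter of detail rather than of method.
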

	\begin{proof}
			On account of $q$, $b \in C^1(\R)$ and $\|\thetastar\|_{\LinfLinf} \lesssim \Rtheta$, we immediately have
		\begin{equation}
		 \|\qstar\|_{\LinfLinf} \leq \olq, \quad	\|\bstar\|_{\LinfLinf} \leq \olb
		\end{equation}
for some $\olq$, $\olb>0$. The lower (non-degeneracy) bounds hold   
		provided $\Rtheta$ is sufficiently small relative to $q(\Thetaa)$ and $b(\Thetaa)$. Indeed, since  
		\begin{equation}
			q(\thetastar+\Thetaa) = q(0+\Thetaa)+ \int_{\Thetaa}^{\thetastar+\Thetaa} q'(s) \ds   \quad \text{in} \ \Omega \times (0,T),
		\end{equation}
		we have 
		\[
			q(\thetastar+\Thetaa) \geq q(\Thetaa)- C|\thetastar| \geq q(\Thetaa)- C\Rtheta  \quad \text{in} \ \Omega \times (0,T).
			\]
			Thus, there exists $\ulq>0$, such that
				\[
			q(\thetastar+\Thetaa) \geq q(\Thetaa)- C\Rtheta \geq \ulq >0
			\]
			 for small enough $\Rtheta$. We can reason similarly for $b$. Further, since $q \in C^{3,1}(\R)$, we have
		\begin{equation}
			\begin{aligned}
				\|\qstar\|_{\LinfHthree} \lesssim&\, 1+ \Rtheta, 
			\end{aligned}	
		\end{equation}
where we have used $\Rtheta \leq \bRtheta$ to estimate higher-order terms with respect to $\Rtheta$ by $ C\Rtheta$. Furthermore,
		\begin{equation}
			\begin{aligned}
				\|\sqrt{t}\, (\qstar)_t\|_{\LinfHtwo}=\,&\,	\|\sqrt{t} \, q'(\thetastar+\Thetaa)\thetastart\|_{\LinfHtwo} \\
				\lesssim\,&\,  \| q'(\thetastar+\Thetaa)\|_{\LinfHtwo} \|\sqrt{t}\,\thetastart\|_{\LinfHtwo}\lesssim R_\theta.
			\end{aligned}	
		\end{equation} 	
	 The bounds on $\bstar$  follows in the same manner. 
	\end{proof}
\noindent We next investigate the properties of the function $\kstar$.
\begin{lemma}\label{lemma: estimates kstar}
		Let the assumptions of Theorem~\ref{thm: West Pennes} hold and let $(\pstar, \thetastar) \in \ball$.  Under the assumptions of Lemma~\ref{lemma: estimates bstar qstar}, we have
		\begin{equation} \label{bound kstar}
			\|\kstar\|_{\LinfHthree} \lesssim 1+ \Rtheta.
		\end{equation}
\end{lemma}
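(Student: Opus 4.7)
The plan is to argue in complete parallel with the treatment of $\|\qstar\|_{\LinfHthree}$ in Lemma~\ref{lemma: estimates bstar qstar}. By Assumption~\ref{assumptions: q b k wb calG}, one has $k(s) = \upsilon/q(s)$, and therefore $\kstar = \upsilon/\qstar$. Lemma~\ref{lemma: estimates bstar qstar} already supplies the non-degeneracy bound $\qstar \geq \ulq > 0$ in $\Omega \times (0,T)$ for $\Rtheta$ sufficiently small, so the map $s \mapsto \upsilon/q(s)$ is of class $\Cthreeoneloc$ on a neighborhood of the range of $\thetastar + \Thetaa$; in particular its derivatives up to order three are uniformly bounded there in terms of $\ulq$ and the $C^{3,1}$ norm of $q$.

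The next step is to invoke a standard composition (chain-rule) estimate in $\Hthree$. Since $\thetastar \in \LinfHthree$ with $\|\thetastar\|_{\LinfHthree} \leq \Rtheta \leq \bRtheta$, and since $\Hthree \hookrightarrow \Linf$ in dimension $d \leq 3$, the argument $\thetastar + \Thetaa$ takes values in a fixed bounded set on which $k$ is smooth. Expanding $\nabla^j \kstar$ for $j \in \{0,1,2,3\}$ via Faà di Bruno into sums of products of $k^{(i)}(\thetastar + \Thetaa)$ with up to three spatial derivatives of $\thetastar$, and estimating the factors in $L^2(\Omega)$ via Hölder's inequality together with the embeddings $\Hone \hookrightarrow L^6(\Omega)$ and $\Htwo \hookrightarrow \Linf$, one obtains the pointwise-in-time bound
\begin{equation*}
\|\kstar(t)\|_{\Hthree} \lesssim 1 + \|\thetastar(t)\|_{\Hthree} + \|\thetastar(t)\|_{\Hthree}^2 + \|\thetastar(t)\|_{\Hthree}^3.
\end{equation*}
Taking the essential supremum over $t \in (0,T)$ and using $\Rtheta \leq \bRtheta$ to absorb the higher powers of $\Rtheta$ into the implicit constant then yields the claimed estimate $\|\kstar\|_{\LinfHthree} \lesssim 1 + \Rtheta$.

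No genuine difficulty is expected here: the only substantive input is the non-degeneracy of $\qstar$, which has already been established in Lemma~\ref{lemma: estimates bstar qstar}; everything else reduces to the $\Hthree$-smoothness of $\thetastar$ combined with the composition estimate. This short lemma will subsequently be used to control the coefficients $\frakl = 2(\kstar/\bstar)\pt$ and $\frakm = (1 - 2\kstar\pstar)/\bstar$ associated with the linearized Westervelt equation via the mapping defined in \eqref{def mapping calT}--\eqref{def frakm frakn frakl}; analogous time-weighted bounds on $(\kstar)_t$ could be obtained in the same manner should they be needed.
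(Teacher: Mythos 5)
Your proposal is correct, but it reaches \eqref{bound kstar} by a somewhat different route than the paper. The paper works with the identity $\kstar=\upsilon\,\qstar^{-1}$ and estimates $\|\qstar^{-1}\|_{\LinfHthree}$ by hand: it expands $\nabla\qstar^{-1}=-\qstar^{-2}\nabla\qstar$, iterates product estimates in $\Htwo$ and $\Hone$ together with the lower bound $\qstar\geq\ulq$, and ends up with $\|\qstar^{-1}\|_{\LinfHthree}\lesssim 1+\|\qstar\|_{\LinfHthree}^3$, after which the already-established bound $\|\qstar\|_{\LinfHthree}\lesssim 1+\Rtheta$ from Lemma~\ref{lemma: estimates bstar qstar} and $\Rtheta\leq\bRtheta\lesssim 1$ give the claim. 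You instead bypass $\qstar$ entirely: using the non-degeneracy $q(\thetastar+\Thetaa)\geq\ulq>0$ on (a neighborhood of) the range of $\thetastar+\Thetaa$, you observe that $k=\upsilon/q$ is itself $\Cthreeoneloc$ there with derivatives controlled by $\ulq$ and the $C^{3,1}$ norm of $q$, and then apply a Fa\`a di Bruno/Moser-type composition estimate to $k(\thetastar+\Thetaa)$, yielding $\|\kstar(t)\|_{\Hthree}\lesssim 1+\|\thetastar(t)\|_{\Hthree}+\|\thetastar(t)\|_{\Hthree}^2+\|\thetastar(t)\|_{\Hthree}^3$ and hence the result from $\|\thetastar\|_{\LinfHthree}\leq\Rtheta$. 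This is exactly the mechanism the paper uses (tersely) for $\qstar$ and $\bstar$ in Lemma~\ref{lemma: estimates bstar qstar}, so your argument is the more uniform one — it treats $k$ on the same footing as $q$ and $b$ — while the paper's version has the small advantage of needing only the global hypothesis $q\in\Cthreeoneloc(\R)$ plus the bounds on $\qstar$, without having to remark that $1/q$ is smooth on the relevant compact range (a point you do handle correctly via the non-degeneracy). The essential input in both proofs is the same: the lower bound on $\qstar$ and the $\Hthree$-control of $\thetastar$ from the ball $\ball$.
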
		
\begin{proof}
	On account of  Assumption~\ref{assumptions: q b k wb calG}, we know that $\kstar$ has the form $\kstar = {\upsilon}{\qstar}^{-1}$
for some $\upsilon \in \R$. Toward estimating $\kstar$, we note that
\begin{equation}
	\begin{aligned}
	 \|\qstar^{-1}\|_{\LinfHthree} \lesssim\,&\, \|\qstar^{-1}\|_{\LinfLtwo}+\|\nabla \qstar^{-1}\|_{\LinfHtwo} \\
		\lesssim\,&\, 1 +  \|\qstar^{-2}\|_{\LinfHtwo}\|\nabla \qstar\|_{\LinfHtwo} \\
		\lesssim\,&\,  1+  \big(\|\qstar^{-2}\|_{\LinfLtwo}+\|\nabla \qstar^{-2}\|_{\LinfHone}\big)\|\nabla \qstar\|_{\LinfHtwo}.
	\end{aligned}
\end{equation}	
Therefore,
\begin{equation}
	\begin{aligned}
	 \|\qstar^{-1}\|_{\LinfHthree} \lesssim&\, 
		  1+  \big(1+\|\qstar^{-3}\nabla \qstar\|_{\LinfHone}\big)\|\nabla \qstar\|_{\LinfHtwo}.
	\end{aligned}
\end{equation}	
The term in the bracket can be further bounded as follows:
\begin{equation}
	\begin{aligned}
		&\|\qstar^{-3}\nabla \qstar\|_{\LinfHone}\\
		\lesssim &\, \|\qstar^{-3}\|_{\LinfLinf}\|\nabla \qstar\|_{\LinfLtwo}+\|\qstar^{-4}(\nabla \qstar)^2\|_{\LinfLtwo}+\|\qstar^{-3}\nabla^2 \qstar\|_{\LinfLtwo}\\
		\lesssim &\,\|\nabla \qstar\|_{\LinfLtwo}+\|\nabla \qstar\|_{\LinfLfour}^2+\|\nabla^2 \qstar\|_{\LinfLtwo}. 
	\end{aligned}
\end{equation}
Putting the above estimates together and using \eqref{bound_b_q}, we obtain: 
\begin{equation}
	\begin{aligned}
		\|(\qstar)^{-1}\|_{\LinfHthree}
		\lesssim\, 1+\| \qstar\|_{\LinfHthree}^3.
	\end{aligned}
\end{equation}
Using the bound in  \eqref{bounds qstar bstar LinfH3} then yields $\|(\qstar)^{-1}\|_{\LinfHthree} \lesssim 1+ \Rtheta$,
where we have employed $\Rtheta \lesssim \bRtheta \lesssim 1$ for higher-order terms in $\Rtheta$. The bound in \eqref{bound kstar} then immediately follows.
\end{proof}
Equipped with the knowledge of properties of functions $\qstar$, $\bstar$, and $\kstar$, we can now determine the properties of functions $\frakm$, $\frakn$, and $\frakl$, defined in \eqref{def frakm frakn frakl}.
\begin{lemma}\label{estimates frakm frakn frakl}		
	Let the assumptions of Theorem~\ref{thm: West Pennes} hold and let $(\pstar, \thetastar) \in \ball$.  Furthermore, let the assumptions of Lemma~\ref{lemma: estimates bstar qstar} hold. For sufficiently small $\Rp>0$, the functions 
	\[
	\begin{aligned}	
		&\frakm = \frac{1}{\bstar}\left(1-2 \kstar \pstar\right), \quad  \frakn= \frac{\qstar}{\bstar}, \quad \frakl = 2\frac{\kstar}{\bstar}\pstart
	\end{aligned}
	\]
	satisfy the regularity and non-degeneracy assumptions of Proposition~\ref{prop: linear pressure higher order} with the following bounds:
	\begin{equation}
		\begin{aligned}
		&0 < \ulfrakm \leq \frakm \leq \olfrakm\quad \text{in } \Omega \times (0,T), \hspace*{2cm}
			\|\frakm\|_{\LinfHthree} \leq\, 1 + \Rtheta +\Rtheta \Rp, \\
		&	\|\sqrt{t}\frakmt\|_{\LinfLinf} \leq \Rtheta (1+\Rp)+\sqrt{T}\Rp, 
		\end{aligned}
	\end{equation}
	and
	\begin{equation}
	\begin{aligned}
		&0 < \ulfrakn \leq \frakn \leq \olfrakn\quad \text{in } \Omega \times (0,T), \quad
			\|\frakn\|_{\LinfHthree} \leq\, 1+ \Rtheta, \quad
			\|\sqrt{t}\fraknt\|_{\LinfLinf} \leq \Rtheta,
	\end{aligned}
\end{equation}
as well as $\|\frakl\|_{\LinfHtwo} \leq (1+\Rtheta)^2 \Rp$.
	\end{lemma}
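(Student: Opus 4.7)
The plan is to reduce each of the claimed bounds to straightforward product estimates in Sobolev spaces of quantities already controlled by Lemmas~\ref{lemma: estimates bstar qstar} and~\ref{lemma: estimates kstar}, together with the defining bounds of the ball $\ball$ (in particular $\|\pstar\|_{L^\infty(\Hthree)}\leq \Rp$ and $\|\pstart\|_{L^\infty(\Htwo)}\leq \Rp$). Throughout, the workhorse is the fact that in dimension $d \leq 3$ the space $\Htwo$ is a Banach algebra and embeds continuously into $\Linf$, so that $\Hthree$ is stable under pointwise products with a product-type bound. No new energy estimate is required; everything is algebraic.

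First I would handle $\frakn = \qstar/\bstar$, which does not involve $\pstar$. The two-sided bound is immediate from \eqref{bound_b_q} by setting $\ulfrakn = \ulq/\olb$ and $\olfrakn = \olq/\ulb$. The $L^\infty(\Hthree)$-bound is obtained by writing $\qstar/\bstar = \qstar\cdot(1/\bstar)$ and treating $1/\bstar$ exactly as $\kstar=\upsilon/\qstar$ was treated in the proof of Lemma~\ref{lemma: estimates kstar} (using the identities $\nabla (1/\bstar) = -\nabla\bstar/\bstar^2$, and so on), then combining with \eqref{bounds qstar bstar LinfH3} to obtain a bound $\lesssim 1+\Rtheta$. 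For the time-weighted derivative I would compute $\fraknt = (\qstar)_t/\bstar - \qstar(\bstar)_t/\bstar^2$, move the $\sqrt{t}$-weight onto $(\qstar)_t$ and $(\bstar)_t$, and bound everything in $\Linf$ via $\Htwo \hookrightarrow \Linf$ to obtain an $\Rtheta$-bound from \eqref{bounds qstar bstar LinfH3}.

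Next I would treat $\frakm = \bstar^{-1}(1-2\kstar\pstar)$. The only nontrivial point here is the lower bound: using $\|\kstar\pstar\|_{\Linf}\lesssim \|\kstar\|_{H^2}\|\pstar\|_{H^2}\lesssim (1+\Rtheta)\Rp$, I would choose $\Rp$ small enough that $|2\kstar\pstar|\leq 1/2$, which together with the two-sided bound on $\bstar$ produces $\ulfrakm,\olfrakm$. The $L^\infty(\Hthree)$-bound follows by splitting $\frakm = 1/\bstar - 2\kstar\pstar/\bstar$, bounding $1/\bstar$ as above and the triple product $\kstar\pstar/\bstar$ in the algebra $\Hthree$ by Lemmas~\ref{lemma: estimates bstar qstar} and~\ref{lemma: estimates kstar}. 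For $\|\sqrt{t}\,\frakmt\|_{\LinfLinf}$ I would differentiate to obtain
\[
\frakmt = -\frac{(\bstar)_t}{\bstar^2}(1-2\kstar\pstar) - \frac{2}{\bstar}\left((\kstar)_t\pstar + \kstar\,\pstart\right),
\]
and estimate the three summands separately. The $(\bstar)_t$ and $(\kstar)_t$ contributions each absorb the $\sqrt{t}$-weight via the estimates of Lemma~\ref{lemma: estimates bstar qstar} and yield factors of size $\Rtheta(1+\Rp)$. The last term, $\kstar\pstart$, however, carries no temperature time-derivative and so cannot supply an $\Rtheta$; here the weight must be handled by the crude bound $\sqrt{t}\leq \sqrt{T}$, producing the $\sqrt{T}\,\Rp$ contribution visible in the statement.

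Finally, $\frakl = 2(\kstar/\bstar)\pstart$ is handled by a single product estimate in the algebra $\Htwo$: using Lemma~\ref{lemma: estimates kstar}, the analogous $\Htwo$-bound for $1/\bstar$, and $\|\pstart\|_{L^\infty(\Htwo)}\leq \Rp$, one immediately gets $\|\frakl\|_{L^\infty(\Htwo)}\lesssim (1+\Rtheta)^2\Rp$. I expect the only mildly delicate point of the argument to be the time-weighted estimate of $\frakmt$, and specifically the $\kstar\pstart$ summand, which forces the appearance of the $\sqrt{T}$ factor in the bound on $\|\sqrt{t}\,\frakmt\|_{\LinfLinf}$; all remaining estimates are routine applications of Sobolev algebra-type product inequalities applied to the already-established bounds on $\qstar$, $\bstar$, and $\kstar$.
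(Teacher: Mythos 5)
Your proposal is correct and follows essentially the same route as the paper: two-sided bounds from Lemma~\ref{lemma: estimates bstar qstar} plus smallness of $\Rp$ for non-degeneracy of $\frakm$, Sobolev product/algebra estimates in $\Hthree$ and $\Htwo$ combined with the $(\qstar)^{-1}$-type argument of Lemma~\ref{lemma: estimates kstar} for $1/\bstar$, and the explicit formula for $\frakmt$ with the time weight absorbed by $\sqrt{t}\thetastart$ except for the $\kstar\pstart$ term, which is bounded crudely by $\sqrt{T}\Rp$ exactly as in the paper. No gaps.
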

\begin{proof}
We begin the proof by observing that
\begin{equation}
	\begin{aligned}
0<\ulfrakm: = \frac{1}{\ulb}\bigg(1 - 2 \frac{|\upsilon|}{\olq} \Rp\bigg)\leq	\frakm	\leq \frac{1}{\ulb}\bigg(1+2 \frac{|\upsilon|}{\olq} \Rp\bigg) := \olfrakm,
	\end{aligned}
\end{equation}
if $\Rp$ is small enough so that $1  -  (2 |\upsilon|/\olq) \Rp >0$.
Further,
\begin{equation} 
	0 < \olfrakn := \ulq\, /\, \olb \leq \frakn \leq \olq \,/\, \ulb.
\end{equation}
	We next use the product estimate in $\Hthree$ to conclude that 
	\begin{equation}
\|\frakm\|_{\LinfHthree}\lesssim \Big\|\frac{1}{\bstar}\Big\|_{\LinfHthree}(1+\|\kstar\|_{\LinfHthree}\|\pstar\|_{\LinfHthree}).
\end{equation}
Employing the bounds in Lemmas~\ref{lemma: estimates bstar qstar} and~\ref{lemma: estimates kstar} then leads to
	\begin{equation}
	\|\frakm\|_{\LinfHthree}\lesssim (1+\Rtheta) \big(1+(1+\Rtheta)\Rp\big) \lesssim 1 + \Rtheta +\Rtheta \Rp.
\end{equation}
Similarly,
\begin{equation}
	\begin{aligned}
		\|\frakn\|_{\LinfHthree} \lesssim \|(\bstar)^{-1}\|_{\LinfHthree}\|\qstar\|_{\LinfHthree} \lesssim (1+ \Rtheta)^2 \lesssim 1+ \Rtheta.
	\end{aligned}
\end{equation}
Next, we note that the following identity holds:
\begin{equation}
\frakmt=\frac{-\bstar'(\thetastar)\theta_{\ast t}}{\bstar^2}\left(1-2 \kstar \pstar\right)-\frac{1}{\bstar}\Big(\kstar'(\thetastar)\theta_{\ast t}\pstar+\kstar(\thetastar)p_{\ast t}\Big).
\end{equation}
Thus, we can estimate $\sqrt{t}\frakmt$ as follows:
\begin{equation}
\begin{aligned}
\|\sqrt{t}\frakmt\|_{\LinfLinf}\lesssim &\,
\begin{multlined}[t]
\frac{1}{\ulb ^2} \|\bstar'\|_{\LinfLinf}\|\sqrt{t}\theta_{\ast t}\|_{\LinfLinf}\Big(1+\|\kstar\|_{\LinfLinf}\|p_\ast\|_{\LinfLinf}\Big)\\
+\frac{1}{\ulb}\|\kstar'\|_{\LinfLinf}\|\sqrt{t}\theta_{\ast t}\|_{\LinfLinf}\|p_\ast\|_{\LinfLinf}\\
+\frac{1}{\ulb}\sqrt{T}\|\kstar\|_{\LinfLinf}\|p_{\ast t}\|_{\LinfLinf}.
\end{multlined}
\end{aligned}
\end{equation}
Hence, by $(\pstar, \thetastar) \in \ball$, we obtain 
\begin{equation}
\|\sqrt{t}\frakmt\|_{\LinfLinf}\lesssim R_\theta (1+\Rp)+\sqrt{
T}\Rp. 
\end{equation}
In an analogous manner, we have 
\begin{equation}
\begin{aligned}
\|\sqrt{t}\fraknt\|_{\LinfLinf}\lesssim\, &\,
\begin{multlined}[t]
\frac{1}{\ulb ^2} \|\qstar '\|_{\LinfLinf}\|\bstar\|_{\LinfLinf}\|\sqrt{t}\theta_{\ast t}\|_{\LinfLinf}\\
+\frac{1}{\ulb ^2}\|\bstar '\|_{\LinfLinf}\|\qstar\|_{\LinfLinf}\|\sqrt{t}\theta_{\ast t}\|_{\LinfLinf} 
\lesssim\, \Rtheta. \end{multlined} 
\end{aligned}
\end{equation}
Finally, we can estimate the function $\frakl$ as follows:
\begin{equation}
	\begin{aligned}
			\|\frakl\|_{\LinfHtwo} \lesssim&\, \|\kstar\|_{\LinfHtwo} \|(\bstar)^{-1}\|_{\LinfHtwo}\|\pstart\|_{\LinfHtwo} \\
			\lesssim&\, (1+\Rtheta)^2 \Rp,
	\end{aligned}
\end{equation}
which completes the proof.
\end{proof}
We have thus verified all assumptions of Proposition~\ref{prop: linear pressure higher order}. We next check that the assumptions of Proposition~\ref{prop: linear temperature higher order} and Corollary~\ref{corollary: Pennes} hold as well. To this end, the next lemma establishes the bounds on the function $\wbstar$, defined in \eqref{def calG wbstar}.
	\begin{lemma}\label{lemma: bound wbstar}
		Let the assumptions of Theorem~\ref{thm: West Pennes} hold and let $(\pstar, \thetastar) \in \ball$.  Then $\wbstar \in \HoneTHtwo$ and the following bound holds:
		\begin{equation} \label{est beta}
			\begin{aligned}
				\|\wbstar\|_{\HoneHtwo} \lesssim 1+\Rtheta.
			\end{aligned}		
		\end{equation}	
	\end{lemma}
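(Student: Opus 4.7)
The plan is to split the target norm into $\|\wbstar\|_{L^2(H^2)}$ and $\|(\wbstar)_t\|_{L^2(H^2)}$, and estimate each by chain/product rules combined with the regularity $\wb \in C^{2,1}_{\textup{loc}}(\R)$ and the a priori bounds provided by $(\pstar, \thetastar) \in \ball$.

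First, since $\thetastar \in \Xtheta \hookrightarrow L^\infty(0,T;H^3(\Omega)) \hookrightarrow \LinfTLinf$ with $\|\thetastar\|_{\LinfLinf} \lesssim \Rtheta \leq \bRtheta$, the arguments $\thetastar + \Thetaa$ lie in a fixed compact interval. Hence by $\wb \in \Ctwooneloc(\R)$ there exists a constant $M>0$, independent of the particular choice of $\thetastar \in \ball$, such that
\begin{equation}
\|\wb^{(j)}(\thetastar+\Thetaa)\|_{\LinfLinf} \leq M, \qquad j=0,1,2.
\end{equation}
This gives in particular $\|\wb(\thetastar+\Thetaa)\|_{\LinfLinf} \lesssim 1$.

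Next, I would apply the chain rule to bound the spatial derivatives. One has
\begin{equation}
\nabla \wbstar = \wb'(\thetastar+\Thetaa)\nabla\thetastar,
\qquad
\Delta \wbstar = \wb''(\thetastar+\Thetaa)|\nabla\thetastar|^2 + \wb'(\thetastar+\Thetaa)\Delta\thetastar.
\end{equation}
Using the embedding $\Hone \hookrightarrow L^4(\Omega)$ (so that $|\nabla\thetastar|^2 \in L^2(\Omega)$ is controlled by $\|\thetastar\|_{\Htwo}^2$) together with the previous $L^\infty$ bounds on $\wb^{(j)}$, we obtain the pointwise-in-time estimate $\|\wbstar(t)\|_{\Htwo} \lesssim 1 + \|\thetastar(t)\|_{\Htwo} + \|\thetastar(t)\|_{\Htwo}^2$. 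Integrating over $(0,T)$ and using $\|\thetastar\|_{\LinfHtwo} \lesssim \|\thetastar\|_{\Xtheta} \leq \Rtheta \leq \bRtheta$ yields $\|\wbstar\|_{L^2(\Htwo)} \lesssim 1+\Rtheta$.

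For the time derivative, $(\wbstar)_t = \wb'(\thetastar+\Thetaa)\thetastart$, and I would estimate the $L^2(0,T;H^2)$ norm by differentiating once more in space:
\begin{equation}
\Delta(\wbstar)_t = \wb'''|\nabla\thetastar|^2\thetastart + \wb''\Delta\thetastar\,\thetastart + 2\wb''\nabla\thetastar\cdot\nabla\thetastart + \wb'\Delta\thetastart.
\end{equation}
Each factor of $\wb^{(j)}(\thetastar+\Thetaa)$ is uniformly bounded by $M$; the $\thetastar$-factors are estimated in $\LinfHtwo$ using the embedding $\Htwo \hookrightarrow \Linf$ (so that products of $H^2$-functions remain in $\Ltwo$), while the $\thetastart$-factors are put in $\LtwoHtwo$ and controlled by $\|\thetastar\|_{\Xtheta} \leq \Rtheta$. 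This delivers $\|(\wbstar)_t\|_{L^2(\Htwo)} \lesssim \Rtheta(1+\Rtheta) \lesssim \Rtheta$.

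Combining the two bounds gives \eqref{est beta}. There is no substantial obstacle: the only mildly subtle point is handling the quadratic term $|\nabla\thetastar|^2$ in $\Delta\wbstar$, which is resolved by the algebra property of $H^2(\Omega)$ in dimension $d\leq 3$ (equivalently, by $\Hone \hookrightarrow L^4(\Omega)$ and $\Htwo \hookrightarrow \Linf$).
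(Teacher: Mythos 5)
Your argument is correct and takes essentially the same route as the paper: both proofs rest on chain-rule/product ($H^2$-algebra) estimates for $\wb(\thetastar+\Thetaa)$ and $(\wbstar)_t=\wb'(\thetastar+\Thetaa)\thetastart$ combined with the ball bounds $\|\thetastar\|_{\Xtheta}\leq\Rtheta$ (the paper just invokes the $H^2$ multiplication property in one line where you expand the derivatives explicitly). The only cosmetic points are that you should estimate all second-order derivatives rather than only $\Delta(\wbstar)_t$ (or argue componentwise, since $\wbstar$ need not vanish on $\partial\Omega$), and that the appearance of $\wb'''$ is legitimate because $\wb\in\Ctwooneloc(\R)$ makes $\wb''$ locally Lipschitz, so $\wb'''$ exists a.e.\ and is locally bounded.
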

	\begin{proof}
	Since $\wb \in \Ctwooneloc(\R)$ and $\|\thetastar\|_{\Xtheta} \leq \Rtheta$, we immediately have
		\begin{equation}
			\begin{aligned}
				\|\wbstar\|_{\LinfHtwo} \lesssim 1+ \Rtheta.
			\end{aligned}
		\end{equation}
	Further,
	\begin{equation}
		\begin{aligned}
			\|(\wbstar)_t\|_{\LtwoHtwo} \leq \|\wb'\|_{\LinfHtwo} \|\qstart\|_{\LtwoHtwo} \lesssim \Rtheta,
		\end{aligned}
	\end{equation}	
which completing the proof.	
	\end{proof}
	
\noindent Furthermore, we have the following estimate for $\frakG$, defined in \eqref{def calG wbstar}.
\begin{lemma} \label{lemma: est frakG} 		Let the assumptions of Theorem~\ref{thm: West Pennes} and Lemma~\ref{lemma: estimates bstar qstar} hold and let $(\pstar, \thetastar) \in \ball$. Then
	\begin{equation} \label{frakG reg assumption}
	\frakG \in \LinfTLinf \cap H^1(0,T; H_0^1(\Omega))
\end{equation}
and the following bound holds:		
\[
\|\frakG\|_{\LinfLinf}+\|\frakG\|_{\HoneHone} \lesssim\, 1+ \Rtheta.
\]
\end{lemma}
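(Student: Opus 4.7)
Since $\frakG = \zeta\, \bstar/\qstar^2$, the argument is a quotient estimate that leans throughout on the uniform two-sided bounds $0 < \ulq \leq \qstar \leq \olq$ and $0 < \ulb \leq \bstar \leq \olb$ furnished by Lemma~\ref{lemma: estimates bstar qstar}. My plan is to handle separately the $\LinfLinf$ norm, the $L^\infty(H^1)$ part of $\|\frakG\|_{\HoneHone}$, and the $L^2(H^1)$ norm of $\frakG_t$.

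The $\LinfLinf$ bound is immediate from the pointwise bounds on $\bstar, \qstar$, giving $\|\frakG\|_{\LinfLinf} \leq \zeta\, \olb/\ulq^2 \lesssim 1$. For the $L^\infty(H^1)$ piece, I would compute
\[
\nabla \frakG = \zeta\bigl(\qstar^{-2}\nabla \bstar - 2\bstar\, \qstar^{-3}\nabla \qstar\bigr)
\]
and use the pointwise lower bound on $\qstar$ together with the $L^\infty(H^3)$ estimates $\|\bstar\|_{\LinfHthree}, \|\qstar\|_{\LinfHthree} \lesssim 1+\Rtheta$ from Lemma~\ref{lemma: estimates bstar qstar}. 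The inverse powers $\qstar^{-k}$ are handled by the same polynomial-expansion argument used for $\qstar^{-1}$ in the proof of Lemma~\ref{lemma: estimates kstar} (since $\nabla(\qstar^{-k}) = -k\, \qstar^{-k-1}\nabla \qstar$ and so on), and one concludes $\|\frakG\|_{L^\infty(H^1)} \lesssim 1+\Rtheta$.

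The main task is the $L^2(H^1)$ bound on $\frakG_t = \zeta\bigl(\qstar^{-2}\bstar_t - 2\bstar\,\qstar^{-3}\qstar_t\bigr)$. The key observation is that membership $(\pstar,\thetastar)\in\ball$ and the definition of $\Xtheta$ give the stronger estimate $\|\thetastart\|_{\LtwoHtwo} \leq \|\thetastar\|_{\Xtheta} \leq \Rtheta$, in addition to the time-weighted $L^\infty(H^2)$ bound used in Lemma~\ref{lemma: estimates bstar qstar}. Using the chain rule $(\bstar)_t = b'(\thetastar+\Thetaa)\thetastart$ and analogously for $(\qstar)_t$, together with the fact that $H^2(\Omega)$ is a Banach algebra in $d\leq 3$ and that $b'(\thetastar+\Thetaa), q'(\thetastar+\Thetaa) \in L^\infty(H^2)$ with norm $\lesssim 1+\Rtheta$, I obtain $\|\bstar_t\|_{\LtwoHtwo} + \|\qstar_t\|_{\LtwoHtwo} \lesssim \Rtheta$. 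Standard $H^1$-product estimates (using $H^2 \hookrightarrow L^\infty$ for the undifferentiated factors and the pointwise bound on $\qstar^{-k}$) then yield $\|\frakG_t\|_{L^2(H^1)} \lesssim \Rtheta$, and adding the three pieces gives $\|\frakG\|_{\LinfLinf} + \|\frakG\|_{\HoneHone} \lesssim 1+\Rtheta$. The only mildly delicate point is the consistent handling of $\qstar^{-k}$ in $H^1$ and $H^2$, already addressed in Lemma~\ref{lemma: estimates kstar}; no smallness of $T$ or $\Rp$ enters, only the smallness of $\Rtheta$ that secures the non-degeneracy $\qstar \geq \ulq$.
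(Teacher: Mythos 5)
Your proposal is correct and follows essentially the same route as the paper's proof: pointwise bounds $\olb/\ulq^2$ for the $\LinfLinf$ part, then chain- and product-rule estimates for $\nabla\frakG$ and $\frakG_t$ using the bounds of Lemma~\ref{lemma: estimates bstar qstar}, the treatment of $(\qstar)^{-k}$ from Lemma~\ref{lemma: estimates kstar}, and the ball bound $\|\thetastart\|_{\LtwoHtwo}\leq \Rtheta$. Your organization into three pieces and the use of slightly stronger intermediate norms ($L^\infty(H^1)$ for $\frakG$, the $H^2$-algebra bound for $\bstar_t$, $\qstar_t$) is a harmless overshoot of the same computation the paper carries out in one compressed chain of inequalities.
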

\begin{proof}
Calling upon Lemma~\ref{lemma: estimates bstar qstar}, we obtain
\begin{equation}
	\begin{aligned}
		&\|\frakG\|_{\LinfLinf}+\|\frakG\|_{\HoneHone} \\
		\lesssim&\, \frac{\olb}{\ulq^2}+\|(\qstar)^{-2} \bstar\|_{\HoneLtwo}+  \| \nabla((\qstar)^{-2} \bstar)\|_{\HoneLtwo} \\
		\lesssim&\, \begin{multlined}[t] 1+ \|((\qstar)^{-2} \bstar)_t\|_{\LtwoLtwo} + 	\| \nabla((\qstar)^{-2} \bstar)\|_{\LtwoLtwo}\\\hspace*{4.5cm}+\| \nabla((\qstar)^{-4} (\bstar' (\qstar)^2-\bstar ((\qstar)^2)'))\|_{\LtwoLtwo}	\end{multlined} \\	
		\lesssim&\, 1 + \|\thetastart\|_{\LtwoLtwo}+\|\nabla \thetastar\|_{\LtwoLtwo}+ \|\nabla \thetastar\|_{\LinfLfour}\|\nabla \thetastart\|_{\LtwoLfour} \\
		\lesssim&\, 1+ \Rtheta ,
	\end{aligned} 
\end{equation} 
where we have used $\Rtheta \leq \bRtheta \lesssim 1$. 
\end{proof}

	We can therefore conclude that the assumptions of Propositions~\ref{prop: linear pressure higher order} and~\ref{prop: linear temperature higher order} as well as Corollary~\ref{corollary: Pennes} are satisfied, and we can rely on them to prove the properties of the mapping $\calT$, the first of which is the self-mapping property.  \\
	
\noindent \underline{Step III}: {$\calT$ is a self-mapping.}\\
	\begin{lemma} \label{lemma: self-mapping}
		Let  the assumptions of Theorem~\ref{thm: West Pennes} hold. For sufficiently small $\deltap>0$ and $\Rp>0$, the mapping $\calT$ satisfies $\calT(\ball) \subset \ball$.
	\end{lemma}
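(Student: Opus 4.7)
The plan is to apply Proposition~\ref{prop: linear pressure higher order} to the pressure component of $\calT(\pstar,\thetastar)$ and then Corollary~\ref{corollary: Pennes} to the temperature component, and to choose the radii $\Rp,\Rtheta$ and the smallness parameters $\deltap,\deltatheta$ so that the resulting estimates close inside $\ball$. The initial conditions $(p,p_t,\theta)\vert_{t=0}=(p_0,p_1,\theta_0)$ are built into the linear problem \eqref{linearized problem fixed point}, so only the norm bounds must be verified.

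First, fix $(\pstar,\thetastar)\in \ball$ and take the coefficients $\frakm,\frakn,\frakl$ as in \eqref{def frakm frakn frakl}. Lemmas~\ref{lemma: estimates bstar qstar}--\ref{estimates frakm frakn frakl} (valid for all $\Rp\le \bRp$, $\Rtheta\le \bRtheta$ sufficiently small) guarantee the non-degeneracy and regularity hypotheses of Proposition~\ref{prop: linear pressure higher order}, and they furnish an a priori bound of the form
\begin{equation*}
\Lambda_1(\frakm,\frakl,\frakn)\;\le\; \Phi_1(\bRp,\bRtheta,T),
\end{equation*}
where $\Phi_1$ is a continuous function independent of the particular $(\pstar,\thetastar)\in\ball$; here one uses in particular that the integral $\int_0^T s^{-1/2}\ds=2\sqrt{T}$ is finite. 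Proposition~\ref{prop: linear pressure higher order} then yields
\begin{equation*}
\|p\|_{\Xp}\;\le\; \Cp\bigl(\|p_0\|_{\Hthree}+\|p_1\|_{\Htwo}+\|\fp\|_{\HoneHone}\bigr)\exp\bigl(C(T)\Phi_1(\bRp,\bRtheta,T)\bigr)\;\le\; \Cp \deltap\, e^{C(T)\Phi_1}.
\end{equation*}
Choosing $\deltap$ so small that $\Cp \deltap\, e^{C(T)\Phi_1}\le \Rp$ gives the pressure part of the self-mapping property.

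Next, Lemmas~\ref{lemma: bound wbstar}--\ref{lemma: est frakG} verify the hypotheses of Proposition~\ref{prop: linear temperature higher order} and Corollary~\ref{corollary: Pennes}, and produce the uniform bounds $\|\wbstar\|_{\HoneHtwo}+\|\frakG\|_{\LinfLinf}+\|\frakG\|_{\HoneHone}\le \Phi_2(\bRtheta)$ and $\Lambda_2(T)\le \Phi_3(\bRtheta,T)$. Applying Corollary~\ref{corollary: Pennes} with $f=\frakG \pt^2+\ftheta$ and using the pressure bound already obtained yields
\begin{equation*}
\|\theta\|_{\Xtheta}\;\le\; \Ctheta\bigl(\deltatheta + \Phi_2(\bRtheta)\,\Rp^2\bigr)\exp\bigl(C(T)\Phi_3(\bRtheta,T)\bigr).
\end{equation*}
The main (and only genuine) difficulty is the coupling: the right-hand side contains the quadratic term $\Rp^2$ coming from $\calG=\frakG \pt^2$, so $\Rp$ must be chosen small relative to $\Rtheta$. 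We resolve this by fixing $\Rtheta\in(0,\bRtheta]$ small enough so that Lemmas~\ref{lemma: estimates bstar qstar}--\ref{lemma: est frakG} apply, then choosing $\Rp\in(0,\bRp]$ so small that $\Ctheta \Phi_2(\bRtheta)\Rp^2 e^{C(T)\Phi_3}\le \Rtheta/2$, and finally $\deltap,\deltatheta$ so small that
\begin{equation*}
\Cp \deltap\, e^{C(T)\Phi_1}\le \Rp,\qquad \Ctheta \deltatheta\, e^{C(T)\Phi_3}\le \Rtheta/2.
\end{equation*}
With this hierarchy of choices, both $\|p\|_{\Xp}\le \Rp$ and $\|\theta\|_{\Xtheta}\le \Rtheta$ hold, and the initial-data matching is automatic from the linear problem, so $\calT(\pstar,\thetastar)\in \ball$.
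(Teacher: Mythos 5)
Your proposal is correct and follows essentially the same route as the paper: verify the hypotheses of Proposition~\ref{prop: linear pressure higher order} and Corollary~\ref{corollary: Pennes} via Lemmas~\ref{lemma: estimates bstar qstar}--\ref{lemma: est frakG}, bound $\Lambda_1$ and $\Lambda_2$ uniformly over $\ball$ (using $\int_0^T t^{-1/2}\dt=2\sqrt{T}$), and then close the estimates by taking $\deltap$, $\deltatheta$, and $\Rp$ small, with $\Rp$ small relative to $\Rtheta$ to absorb the quadratic term $\Rp^2$ coming from $\calG=\frakG\pt^2$. This is exactly the ordering of choices the paper makes, so no further comment is needed.
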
 
	\begin{proof}
Thanks to verifying all assumptions, we know that we can employ Proposition~\ref{prop: linear pressure higher order} on problem \eqref{linearized problem fixed point} to obtain the solution $(p, \theta) \in \solspaceWestPennes$. Furthermore, the following bound holds:
\begin{equation} \label{bound p}
	\begin{aligned}
		\|p\|_{\Xp} \lesssim \deltap \exp\big(C(T)\Lambda_1(\frakm, \frakl, \frakn)\big),
	\end{aligned}
\end{equation}
where we recall that
\begin{equation}
	\begin{aligned}
		\begin{multlined}[t]
		\Lambda_1(\frakm, \frakl, \frakn) =\,  1+\|\frakn\|_{\Xfrakn}+\calL^2(\frakm, \frakn, \frakl)
			+\int_0^T \frac{1}{\sqrt{t}}\Big( \|\sqrt{t}\frakm_t\|_{\Linf} +\|\sqrt{t}\frakn_t\|_{\Linf}\Big)\dt
		\end{multlined}
	\end{aligned}
\end{equation}
with
\begin{equation}
	\calL(\frakm, \frakn, \frakl) = \|\frakm\|_{\Xfrakm}\big(1+\|\frakm\|_{\Xfrakm}\big)\big(1+\|\frakn\|_{\Xfrakn}+\|\frakl\|_{\Xfrakl}\big).
\end{equation}
We can use Lemma~\ref{estimates frakm frakn frakl} to find that
\[
\begin{aligned}
	\calL(\frakm, \frakn, \frakl) \lesssim \, &\, \big(1+\Rtheta+\Rtheta \Rp \big)\big(1+\Rtheta+\Rtheta \Rp\big) \big(1+\Rtheta +(1+\Rtheta)^2 \Rp\big) \\
	\lesssim&\, 1 + \Rtheta + \Rp.
\end{aligned}
\]
	Then further utilizing Lemma~\ref{estimates frakm frakn frakl} and noting that $\int_0^T t^{-1/2}\dt = 2\sqrt{T}$, we conclude that
	\begin{equation}
		\begin{aligned}
		\Lambda_1(\frakm, \frakl, \frakn) \lesssim&	\,	\begin{multlined}[t]		 1+ \Rtheta +\Rp +(1+\Rtheta + \Rp) \int_0^T t^{-1/2}\dt 
		\end{multlined} \\
		\lesssim_T&\,  1+\Rtheta + \Rp.
		\end{aligned}
	\end{equation}
	Thus, for small enough $\deltap$,  \eqref{bound p} implies that
		\begin{equation}
		\begin{aligned}
			\|p\|_{\Xp} 
			 \lesssim \,&\,  \deltap \exp\Big\{C(T)\big(1+ \bRtheta +\bRp\big)\Big\} \leq \Rp.
		\end{aligned}
	\end{equation}
		We next estimate the temperature $\theta$. By Corollary~\ref{corollary: Pennes},  we have
\begin{equation} \label{est theta}
	\begin{aligned}
		\|\theta\|_{\mathcal{X}_\theta} 
		\lesssim&\,\begin{multlined}[t]
		\left(\|\theta_0\|_{\Hthree}+ \big(\|\frakG\|_{\LinfLinf}+\|\frakG\|_{\HoneHone}\big)\|\pt\|^2_{\Xp}+ \|\ftheta\|_{\HoneHone}\right) \\
		\times\exp\big(C(T)\Lambda_2(T)\big),
		\end{multlined}
	\end{aligned}
\end{equation}
where we recall that
	\begin{equation} \label{est Lambda_2}
	\Lambda_2(T) = \intT \Big(1+ \|\wbstar(t)\|^2_{\Htwo}+\|\wbstart(t)\|^2_{\Htwo}\Big) \dt
\end{equation}
and that thanks to Lemma~\ref{lemma: est frakG}, we have
\begin{equation}
	\begin{aligned}
		&\|\frakG\|_{\LinfLinf}+\|\frakG\|_{\HoneHone} \lesssim\, 1+ \Rtheta.
	\end{aligned} 
\end{equation} 
Using Lemma~\ref{lemma: bound wbstar} to estimate \eqref{est Lambda_2}, we find that
\begin{equation} 
	\begin{aligned}
		\Lambda_2(T) \lesssim_T 1+ \Rtheta.
	\end{aligned}
\end{equation}
Employing these bounds in \eqref{est theta} immediately yields
\begin{equation}
	\begin{aligned}
		\|\theta\|_{\Xtheta} 
		\lesssim \,&\, \Big(\deltatheta+ (1+\bRtheta)\Rp^2\Big)\exp\big(C(T)(1+\bRtheta)\big).
	\end{aligned}
\end{equation}
We can therefore also guarantee that $\|\theta\|_{\Xtheta} \leq \Rtheta$ provided $\deltatheta$ and $\Rp$ are sufficiently small. Thus we conclude that $(p, \theta) \in \ball$ for properly adjusted radii and data size.
	\end{proof}	  
~\\
\noindent \underline{Step IV}: {$\calT$ is strictly contractive.} \\

\noindent We next prove contractivity of the mapping.  
	Take $(\ponestar, \thetaonestar)$, $(\ptwostar, \thetatwostar) \in \ball$.		Let 
	\[
	\mathcal{T}(\ponestar, \thetaonestar)=(\pone, \thetaone), \qquad \mathcal{T}(\ptwostar, \thetatwostar)=(\ptwo, \thetatwo)\]
	and denote the differences as
	\[
	\opstar = \ponestar-\ptwostar, \ \othetastar = \thetaonestar-\thetatwostar.
	\]
	Let us also introduce the short-hand notation
	\begin{equation}
		\begin{aligned}
			\psionestar =&\,  \psi(\thetaonestar+\Thetaa), \quad 	&&\psitwostar =\,  \psi(\thetatwostar+\Thetaa)
		\end{aligned}
	\end{equation}
	for $\psi \in \{b, q, k, \wb\}$ and
	\[
	\begin{aligned}	
		\frakm^{(i)} = \frac{1}{\bstar^{(i)}}\left(1-2 \kstar^{(i)} \pstar^{(i)}\right), \quad  \frakn^{(i)}= \frac{\qstar^{(i)}}{\bstar^{(i)}}, \quad \frakl^{(i)} = 2\frac{\kstar^{(i)}}{\bstar^{(i)}}\pstart^{(i)},\qquad \text{for }i=1,2.
	\end{aligned} 
	\] 
	We can then see the difference
	\[\big(\op, \otheta\big)= \big(\pone-\ptwo, \thetaone-\thetatwo\big)\]
	as the solution of the following wave-heat system:
	\begin{equation}\label{Equation_West_diff}
		\left\{    
		\begin{aligned}
			&\begin{multlined}[t] \frakmone \optt - \fraknone \Delta \overline{p}- \frakK* \Delta \opt -\fraklone \opt =  F ,  \end{multlined} \\[2mm] 
			& \othetat - \kappa \Delta \otheta + \nu\wbonestar \otheta = G, 
		\end{aligned}
		\right.    
	\end{equation}
	supplemented with homogeneous initial and boundary conditions. 	The acoustic right-hand side in \eqref{Equation_West_diff} is given by
	\begin{equation} \label{rhs_contractivity}
		\begin{aligned}
			F = \begin{multlined}[t]
			    -\big(\frakmone-\frakmtwo\big) \ptwostartt
				+\big(\fraknone-\frakntwo\big) \Delta \ptwostar
				+\big(\fraklone-\frakltwo\big)\ptwostart
			\end{multlined}
		\end{aligned} 
	\end{equation}  
	and the right-hand side of the temperature equation by
	\begin{equation} \label{def G}
		\begin{aligned}
			G =  -\nu \Big(\wbonestar-\wbtwostar\Big) \thetatwostar + \zeta \frac{\bonestar}{\big(\qonestar\big)^2}\big(\pone_{t}\big)^2- \zeta \frac{\btwostar}{\big(\qtwostar\big)^2}\big(\ptwo_{t}\big)^2.
		\end{aligned}
	\end{equation}
	Toward proving contractivity, we need the following estimate of $F$.
	\begin{lemma} \label{lemma: est F}	Let the assumptions of Theorem~\ref{thm: West Pennes} hold. Then
			\begin{equation} \label{est F} 	
	\begin{aligned}
		\|F\|_{\LonetLtwo} \lesssim \sqrt{T}\Rp \Big(\|\opstar\|_{\LtwotHone}+\|\opstart\|_{\LtwotLtwo}+\|\othetastar\|_{\LtwotHone}\Big).
	\end{aligned}
\end{equation}
	\end{lemma}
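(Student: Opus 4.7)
The plan is to decompose $F$ as in \eqref{rhs_contractivity} into three pieces and estimate each in $L^1(0,t;L^2(\Omega))$ by combining H\"older's inequality in space and time with Sobolev embeddings and the regularity of $\pstar^{(2)}$ coming from $\|\pstar^{(2)}\|_{\Xp}\leq \Rp$. The key preparatory computation is to express each coefficient difference as a product of a uniformly bounded rational factor and a linear combination of $\othetastar$, $\opstar$, or $\opstart$. Writing out telescoping identities of the type
\[
\frakmone-\frakmtwo = -\frac{\bonestar-\btwostar}{\bonestar\btwostar}\bigl(1-2\kstar^{(1)}\ponestar\bigr) - \frac{2}{\btwostar}\bigl[(\kstar^{(1)}-\kstar^{(2)})\ponestar + \kstar^{(2)}\opstar\bigr],
\]
and analogously for $\fraknone-\frakntwo$ and $\fraklone-\frakltwo$, and invoking local Lipschitz continuity of $b,q,k$ on the bounded range $|\thetastar^{(i)}|\lesssim \Rtheta$ (Assumption~\ref{assumptions: q b k wb calG}) together with Lemmas~\ref{lemma: estimates bstar qstar}--\ref{lemma: estimates kstar}, I would obtain the pointwise bounds $|\frakmone-\frakmtwo|\lesssim|\othetastar|+|\opstar|$, $|\fraknone-\frakntwo|\lesssim|\othetastar|$, and the splitting $\fraklone-\frakltwo = A\,\ponestart + B\,\opstart$ with $|A|\lesssim|\othetastar|$, $|B|$ uniformly bounded.

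For the $(\frakmone-\frakmtwo)\ptwostartt$ piece, $\ptwostartt\in L^2_t(H^1)\hookrightarrow L^2_t(L^6)$ has norm $\lesssim\Rp$, and by $H^1\hookrightarrow L^3$ (valid for $d\le 3$) and the coefficient bound above, $\|\frakmone-\frakmtwo\|_{L^3}\lesssim\|\opstar\|_{H^1}+\|\othetastar\|_{H^1}$; H\"older in space ($L^3\times L^6$) and Cauchy--Schwarz in time then yield a bound $\lesssim \Rp\bigl(\|\opstar\|_{L^2_t(H^1)}+\|\othetastar\|_{L^2_t(H^1)}\bigr)$, which is absorbed into the stated $\sqrt{T}\Rp$ form by enlarging the constant. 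For the $(\fraknone-\frakntwo)\Delta\ptwostar$ piece, I would pull out $\|\Delta\ptwostar\|_{L^\infty_t(L^6)}\lesssim\Rp$ (using $\Delta\ptwostar\in L^\infty_t(H^1)\hookrightarrow L^\infty_t(L^6)$) and estimate the remaining time integral by Cauchy--Schwarz to produce the explicit $\sqrt{T}\Rp\|\othetastar\|_{L^2_t(H^1)}$.

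For the $(\fraklone-\frakltwo)\ptwostart$ piece, I would split it according to $A\,\ponestart + B\,\opstart$. The $A\ponestart\ptwostart$ summand is handled by pulling out $\|\ponestart\|_{L^\infty_t(L^\infty)},\|\ptwostart\|_{L^\infty_t(L^\infty)}\lesssim\Rp$ (via $H^2\hookrightarrow L^\infty$ for $\pstar^{(i)}_t\in L^\infty_t(H^2)$), yielding $\lesssim\Rp^2\int_0^t\|\othetastar\|_{L^2}\,ds\lesssim\sqrt{T}\Rp^2\|\othetastar\|_{L^2_t(H^1)}$, then absorbing $\Rp^2\le\bRp\Rp$. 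The $B\opstart\ptwostart$ summand is bounded by pulling out $\|\ptwostart\|_{L^\infty_t(L^\infty)}\lesssim\Rp$ and applying Cauchy--Schwarz in time to get $\lesssim\sqrt{T}\Rp\|\opstart\|_{L^2_t(L^2)}$.

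Summing the three contributions produces \eqref{est F}. The main obstacle is matching the low-regularity differences ($H^1$ on $\opstar, \othetastar$, and only $L^2$ on $\opstart$) against the dual factors coming from $\pstar^{(2)}$: this rules out the naive $L^\infty\times L^2$ H\"older (since $H^1\not\hookrightarrow L^\infty$ in $d=3$) and forces the $L^3\times L^6$ pairing for the $\ptwostartt$ piece. The $\sqrt{T}$ factor itself is not sharp on the first piece, but arises naturally in the other two from the Cauchy--Schwarz conversion $L^1_t\hookrightarrow\sqrt{T}\,L^2_t$, and it is retained in the statement for uniformity.
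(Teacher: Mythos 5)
Your proposal follows essentially the same route as the paper's proof in Appendix~\ref{Appendix: Estimate F}: the same three-term decomposition \eqref{rhs_contractivity}, the same telescoping identities for $\frakmone-\frakmtwo$, $\fraknone-\frakntwo$, $\fraklone-\frakltwo$ combined with local Lipschitz continuity of $b$, $q$, $k$, the non-degeneracy and boundedness results of Lemmas~\ref{lemma: estimates bstar qstar}--\ref{lemma: estimates kstar} together with ball membership, and H\"older/Sobolev pairings against the $\Xp$-regularity of $\ptwostar$; your treatment of the $(\fraknone-\frakntwo)\Delta\ptwostar$ and $(\fraklone-\frakltwo)\ptwostart$ pieces is the paper's argument up to harmless changes of exponents. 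The one genuine deviation is the $(\frakmone-\frakmtwo)\ptwostartt$ piece: the paper first writes $\|F\|_{\LonetLtwo}\lesssim\sqrt{T}\|F\|_{\LtwotLtwo}$ and then pairs $\|\frakmone-\frakmtwo\|_{\LinftLfour}$ with $\|\ptwostartt\|_{\LtwoLfour}$, whereas you pair $L^2_t(\Lthree)\times L^2_t(\Lsix)$ directly in $L^1_t(\Ltwo)$. Your pairing buys something: it needs only the $\LtwotHone$ norms of $\opstar$, $\othetastar$ that actually appear on the right-hand side of \eqref{est F} (the paper's displayed chain, as written, calls for $L^\infty_t(\Hone)$-type control of $\frakmone-\frakmtwo$, which its subsequent difference estimates do not supply), and it lets you work with pointwise bounds on the coefficient differences instead of computing their gradients. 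What it loses is the $\sqrt{T}$ prefactor on that term, and your claim that the missing $\sqrt{T}$ can be ``absorbed by enlarging the constant'' is not valid if the hidden constant in \eqref{est F} is meant to be independent of $T$: for small $T$ such an absorption costs a factor $T^{-1/2}$. This discrepancy is immaterial where the lemma is used, namely in the contractivity proof of Proposition~\ref{prop: contractivity}, since there $T$ is fixed, constants $C(T)$ are admitted, and strict contractivity is obtained by shrinking $\Rp$ and $\Rtheta$; but as a proof of the literal inequality \eqref{est F} you should either state that piece with a $T$-dependent constant or accept $L^\infty_t$-type norms of $\opstar$, $\othetastar$ on the right, which is the trade-off the paper's pairing implicitly makes.
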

	\begin{proof}
The proof follows by exploiting the properties of functions $\frakm^{(1,2)}$ and $\pstar^{(1), (2)}$. For completeness, we proved the details in Appendix~\ref{Appendix: Estimate F}.
	\end{proof}
	We will show that  $\calT$ is a contraction with respect to the norm of the space $\Yp \times \Ytheta \supseteq  \Xp \times \Xtheta$, where $\Yp$ is defined as
	\begin{equation} \label{def_Yp}
		\begin{aligned}
			\Yp
			= \Big\{
			p \in L^{\infty}(0, T; \Honezero):\, &p_t \in L^{\infty}(0, T;  \Ltwo), 
			\, \ \frakK*\nabla  \pt \in \LtwoLtwo \Big\},
		\end{aligned}
	\end{equation}
endowed with the norm 
	\begin{equation}
		\begin{aligned}
			\|p\|_{\Yp} = \left(\Vert p\Vert^2_{L^\infty(\Hone)}+\Vert p_t\Vert_{L^\infty(\Ltwo)}^2 + \|\frakK* \nabla  \pt\|^2_{\LtwoLtwo}\right)^{1/2}.
		\end{aligned}
	\end{equation}
	Further the space $\Ytheta$ is given by
	\begin{equation}
		\begin{aligned}
			\Ytheta =  \LinfTLtwo \cap \LtwoTHonez,
		\end{aligned}
	\end{equation}
	with the norm
	\begin{equation}
		\|\theta\|_{\Ytheta} = \left(\|\theta\|^2_{\LinfLtwo} + \|\nabla \theta\|^2_{\LtwoLtwo}\right)^{1/2}.
	\end{equation}
	We then set
	\begin{equation}
		\|(p, \theta)\|_{\Yp \times \Ytheta} = \left(\|p\|_{\Yp}^2+\|\theta\|^2_{\Ytheta}\right)^{1/2}
	\end{equation}
	and proceed to proving contractivity.
	\begin{proposition} \label{prop: contractivity}
		Let the assumptions of Theorem~\ref{thm: West Pennes} hold.  For sufficiently small radii $\Rp>0$ and $\Rtheta>0$, the mapping $\mathcal{T}$ is strictly contractive with respect to  $\|(\cdot, \cdot)\|_{\Yp \times \Ytheta}$.  
	\end{proposition}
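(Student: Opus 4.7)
My strategy is to exploit that the difference $(\op,\otheta)$ solves a linear system with zero initial and boundary data and right-hand sides $F$ and $G$ which already carry the desired smallness in their coefficients. Testing each equation with the natural multiplier for the weaker energy $\Yp\times\Ytheta$ and absorbing the right-hand sides via Lemma~\ref{lemma: est F} and an analogous direct bound on $G$ should produce an overall multiplier that scales with $\Rp+\Rtheta$, forcing a strict contraction after the radii from Step~II are chosen sufficiently small.

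For the acoustic half, I would test the first equation of \eqref{Equation_West_diff} with $\opt$ over $(0,t)\times\Omega$, mirroring the uniqueness argument at the end of the proof of Proposition~\ref{prop: linear pressure higher order}. The $\frakmone\optt\cdot\opt$ and $-\fraknone\Delta\op\cdot\opt$ terms produce the kinetic and ``elastic'' energies of $\op$ modulo commutators with $\frakmone_t$, $\fraknone_t$, $\nabla\fraknone$ and $\fraklone$; these commutators are controlled by Lemma~\ref{estimates frakm frakn frakl}, with the time-weighted bounds on $\sqrt{t}\frakmone_t,\sqrt{t}\fraknone_t$ integrated against $1/\sqrt{s}\in L^1(0,T)$ exactly as in \eqref{est frakmt}. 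The coercivity \eqref{assumption coercivity kernel} then yields the memory contribution $\CfrakK\|\frakK*\nabla\opt\|^2_{\LtwoLtwo}$, and a \Gronwall step (using $\op|_{t=0}=\opt|_{t=0}=0$) gives $\|\op\|_{\Yp}\leq C(T,\Rp,\Rtheta)\|F\|_{\LonetLtwo}$. Inserting Lemma~\ref{lemma: est F} and bounding $\|\opstar\|_{\LtwotHone}, \|\opstart\|_{\LtwotLtwo}\leq \sqrt{T}\|\opstar\|_{\Yp}$ and $\|\othetastar\|_{\LtwotHone}\leq \|\othetastar\|_{\Ytheta}$, I obtain
\[
\|\op\|_{\Yp}\leq C(T,\Rp,\Rtheta)(\sqrt{T}+T)\Rp\,\|(\opstar,\othetastar)\|_{\Yp\times\Ytheta}.
\]

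For the parabolic half, testing the temperature equation in \eqref{Equation_West_diff} with $\otheta$ and absorbing $\nu\wbonestar\otheta^2$ via \Gronwall (using Lemma~\ref{lemma: bound wbstar}) gives $\|\otheta\|_{\Ytheta}\leq C(T,\Rtheta)\|G\|_{\LtwoLtwo}$. The source $G$ in \eqref{def G} is split via
\[
\frakG^{(1)}(\pone_t)^2-\frakG^{(2)}(\ptwo_t)^2 = \bigl(\frakG^{(1)}-\frakG^{(2)}\bigr)(\pone_t)^2 + \frakG^{(2)}\,\opt\,(\pone_t+\ptwo_t),
\]
and the Lipschitz continuity of $\wb$ and of $\theta\mapsto\frakG(\theta+\Thetaa)$ allows pointwise bounds of the form $|\wbonestar-\wbtwostar|+|\frakG^{(1)}-\frakG^{(2)}|\lesssim |\othetastar|$. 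Pairing each factor of $|\othetastar|$ or $|\opt|$ with the $\LinfLinf$-norm of the other factor---carried by $\thetatwostar$, $\pone_t$, $\ptwo_t$ or $\frakG^{(2)}$ via $\Xtheta\hookrightarrow\LinfLinf$ and $\Xp\hookrightarrow \LinfHtwo\hookrightarrow\LinfLinf$---yields
\[
\|G\|_{\LtwoLtwo}\lesssim \Rtheta\|\othetastar\|_{\LtwoLtwo}+\Rp^2\|\othetastar\|_{\LtwoLtwo}+\Rp\sqrt{T}\|\opt\|_{\LinfLtwo},
\]
and substituting the acoustic bound closes the loop, producing
\[
\|\otheta\|_{\Ytheta}\leq C(T,\Rp,\Rtheta)\bigl(\Rtheta+\Rp^2+\sqrt{T}(\sqrt{T}+T)\Rp^2\bigr)\|(\opstar,\othetastar)\|_{\Yp\times\Ytheta}.
\]

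Summing the two estimates, the overall multiplicative constant takes the form $C(T)\Phi(\Rp,\Rtheta)$ with $\Phi\to 0$ as $(\Rp,\Rtheta)\to 0$, so the contractivity follows by shrinking the radii further if necessary (while keeping the constraints from Step~III intact). The hard part will be the $G$-estimate above: since $\othetastar\in\Ytheta$ provides \emph{no} $\LinfLinf$-control whatsoever, every product must be arranged so that the $\LinfLinf$-norm falls on the ``strong'' factor living in the ball $\ball$, which is precisely what makes the higher-regularity bounds from Lemmas~\ref{lemma: estimates bstar qstar}--\ref{lemma: est frakG} indispensable here. A secondary bookkeeping concern is that the \Gronwall exponentials inherited from the variable coefficients depend on $T$, $\Rp$, and $\Rtheta$; they remain finite by the a priori bounds already proven in Step~II, and in particular are harmless for the smallness mechanism driving the contraction.
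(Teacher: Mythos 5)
Your argument is correct and follows essentially the same route as the paper: testing the difference system with $(\opt,\otheta)$, invoking the kernel coercivity \eqref{assumption coercivity kernel}, handling the coefficient time derivatives through the $\sqrt{t}$-weighted bounds of Lemma~\ref{estimates frakm frakn frakl}, using Lemma~\ref{lemma: est F} for $F$ and the same algebraic splitting of $G$, and closing with \Gronwall{} so that the contraction constant carries a factor vanishing with $\Rp,\Rtheta$. The only (harmless) organizational difference is that you close the acoustic estimate first and substitute it into the temperature estimate, whereas the paper adds the two energy inequalities and applies a single \Gronwall{} step to the coupled quantity; both work because $F$ depends only on the starred differences, so the coupling is one-directional.
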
 
	\begin{proof}
The proof is conducted by testing the difference problem with $(\opt, \otheta)$. Testing equation \eqref{Equation_West_diff} with $\opt$, integrating with respect to $t$ and using \eqref{assumption coercivity kernel}, we obtain 
\begin{equation}\label{Energy_Ident_p}
\begin{aligned}
&\frac{1}{2}\Big(\big\|\sqrt{\frakmone} \opt \big\|_{\Ltwo}^2 + \big\|\sqrt{\fraknone} \nabla\op \big\|_{\Ltwo}^2\Big) +\CfrakK \int_0^{t} \big\|(\frakK* \nabla \opt )(s)\big\|^2_{\Ltwo} \ds \\
\leq&\,
\begin{multlined}[t]
\frac{1}{2}\inttO \frakmone_t |\opt|^2\dxs+\frac{1}{2}\inttO \fraknone_t |\nabla\op|^2\dxs + \inttO  \opt \nabla \fraknone \cdot \nabla  \op\dxs\\
+\inttO \fraklone |\opt |^2\dxs+\inttO F\,\opt \dxs.
\end{multlined}
\end{aligned}
\end{equation}    
We estimate the first term on the right-hand side of \eqref{Energy_Ident_p} as follows: 
\begin{equation}
\begin{aligned}
\frac{1}{2}\inttO \frakmone_t |\opt|^2\dxs\leq&\,\frac12 \intt \big\|\frakmone_t\big\|_{\Linf}  \| \opt\|^2_{\Ltwo} \ds\\
		\leq &\, \frac12\intt \frac{1}{\sqrt{s}} \big\|\sqrt{s}\frakmone_t\big\|_{\Linf}  \| \opt\|^2_{\Ltwo} \ds.
	\end{aligned}
\end{equation}
Similarly, 
\begin{equation}
\begin{aligned}
\frac{1}{2}\inttO \fraknone_t |\nabla\op|^2\dxs\leq&\,
		 \frac12\intt \frac{1}{\sqrt{s}} \big\|\sqrt{s}\fraknone_t\big\|_{\Linf}  \| \nabla\op\,\|^2_{\Ltwo} \ds.
	\end{aligned}
\end{equation}
Next, we estimate the third term on the right-hand side of \eqref{Energy_Ident_p} as
\begin{equation}
\inttO  \opt \nabla \fraknone \cdot \nabla  \op\dxs \leq \frac12 \int_0^t \big\|\nabla\fraknone \big\|_{\Linf} \Big(\|\opt\|_{\Ltwo}^2+\|\nabla\op\|_{\Ltwo}^2\Big)\ds.
\end{equation}
The fourth term is estimated as 
\begin{equation}
\inttO \fraklone |\opt |^2\dxs \lesssim \intt \big\|\fraklone \big\|_{\Linf}\|\opt\|_{\Ltwo}^2\ds.
\end{equation}
We estimate the last term as
	\begin{equation}
	\inttO F\opt \dxs\leq \intt \|F\|_{\Ltwo}\|\opt\|_{\Ltwo}\ds \lesssim \|F\|^2_{\LonetLtwo} + \eps \|\opt\|_{\LinftLtwo}^2
\end{equation}
for any $\eps>0$. 
By plugging the above estimates into \eqref{Energy_Ident_p}, making use of \eqref{nondegeneracy assumption frakm frakn}, taking the supremum over $t \in (0,\tau)$ for $\tau \in (0,T)$, and fixing $\eps>0$ small enough, we obtain 
\begin{equation}\label{estimate op}
\begin{aligned}
&\underbrace{\| \opt(t) \|_{\Ltwo}^2 + \| \nabla\op(t) \|_{\Ltwo}^2}_{\displaystyle:=E_0[\op](t)} + \,\CfrakK \int_0^{t} \big\|(\frakK* \nabla \opt )(s)\big\|^2_{\Ltwo} \ds \\
\lesssim &\,
\begin{multlined}[t]
\|F\|^2_{\LonetLtwo} + \Big(1+\big\|\sqrt{t}\frakmone_t\big\|_{\LinfLinf} + \big\|\sqrt{t}\fraknone_t\big\|_{\LinfLinf}\Big)\\
\times\intt \left(\frac{1}{\sqrt{s}} + \big\|\nabla\fraknone \big\|_{\Linf} + \big\|\fraklone \big\|_{\Linf}\right)E_0[\op](s)\ds,
\end{multlined}
\end{aligned}
\end{equation}
for $t \in [0,T]$.  
Let 
\begin{equation}
\begin{aligned}
\overline{\Lambda}(t):= & \,
\begin{multlined}[t]
\Big(1 + \big\|\sqrt{t}\frakmone_t\big\|_{\LinfLinf} + \big\|\sqrt{t}\fraknone_t\big\|_{\LinfLinf}\Big)\\
\times \Big(\big\|\nabla\fraknone \big\|_{\LoneLinf} + \big\|\fraklone \big\|_{\LoneLinf}\Big).
\end{multlined}
\end{aligned}  
\end{equation}
By Lemma~\ref{estimates frakm frakn frakl},
\[
\overline{\Lambda}(t) \leq C(T), \ t \in [0,T]. 
\]
Thanks to Lemma~\ref{lemma: est F}, we have
\begin{equation}
	\begin{aligned}
		\|F\|_{\LonetLtwo} \lesssim \sqrt{T}\Rp (\|\opstar\|_{\LtwotHone}+\|\opstart\|_{\LtwotLtwo}+\|\othetastar\|_{\LtwotHone}).
	\end{aligned}
\end{equation}
To estimate the difference in the temperatures, we test the equation for $\otheta$  in \eqref{Equation_West_diff}   with $\otheta$, which leads to
\begin{equation} \label{estimate otheta}
	\begin{aligned}
		&\frac12 \|\otheta(t)\|^2_{\Ltwo}+ \frac{\kappa}{2}\intt \|\nabla \otheta\|^2_{\Ltwo}\ds  \\
		=&\,- \nu \inttO \wbonestar \otheta^2 \dxs + \inttO G \otheta \dxs \\
		\lesssim&\, \intt \|\wbonestar\|_{\Linf}\|\otheta\|^2_{\Ltwo}\ds +\|G\|_{\LonetLtwo}^2+\eps \|\otheta\|_{\LinftLtwo}
	\end{aligned}
\end{equation}
for any $\eps>0$, where we recall that $G$ is defined in \eqref{def G} as
	\begin{equation} 
	\begin{aligned}
		G=&\,  -\nu (\wbonestar-\wbtwostar) \thetatwostar + \zeta \frac{\bonestar}{(\qonestar)^2}(\pone_{t})^2- \zeta \frac{\btwostar}{(\qtwostar)^2}(\ptwo_{t})^2.
	\end{aligned}
\end{equation}
Since we can rewrite $G$ as
	\begin{equation} 
	\begin{aligned}
		G
		=&\, \begin{multlined}[t]-\nu (\wbonestar-\wbtwostar) \thetatwostar + \zeta \frac{\bonestar}{(\qonestar)^2}\opt(\pone_{t}+\ptwo_t)+ \zeta \left(\frac{\bonestar}{(\qonestar)^2}-\frac{\btwostar}{(\qtwostar)^2} \right)(\ptwo_{t})^2,
		\end{multlined}
	\end{aligned}
\end{equation}
we have
\begin{equation}
	\begin{aligned}
		\|G\|_{\LonetLtwo} \lesssim \sqrt{T} (\Rtheta \|\othetastar\|_{\LtwotLtwo} + \|\opt\|_{\LtwotLtwo})
	\end{aligned}
\end{equation} 
By adding \eqref{estimate op} and \eqref{estimate otheta} and then applying \Gronwall's inequality, we obtain
\begin{equation}
	\begin{aligned}
		\|\op\|^2_{\Yp} + \|\otheta\|^2_{\Ytheta} \leq C(T) T (\Rp + \Rtheta) (\|\opstar\|^2_{\Yp}+ \|\othetastar\|^2_{\Ytheta}).
	\end{aligned}
\end{equation}
Thus the contractivity of $\calT$ follows by reducing $\Rp$ and $\Rtheta$.
\end{proof}
	
	We now have all the ingredients to prove the well-posedness of the nonlocal Westervelt--Pennes system \eqref{coupled_problem West Pennes}.
	
	\begin{proof}[proof of Theorem~\ref{thm: West Pennes}]
	The statement now follows by employing Lemma~\ref{lemma: self-mapping} and Proposition \ref{prop: contractivity}, together with Banach's fixed-point theorem.  
	\end{proof}
		\section{Estimates of the concentration} \label{sec: WestPennes concentration}
The previous section established the existence of a unique solution of the Westervelt--Pennes system. We now consider the full system modeling ultrasound-enhanced drug delivery, which we restate here for convenience:  
\begin{equation} \tag{\ref{coupled_system West Pennes concentration}}
				\boxed{
					\begin{aligned}
						&\ p_{tt}-q(\theta+\Thetaa)\Delta p - b(\theta+\Thetaa) \Delta p_t = k(\theta+\Thetaa)\left(p^2\right)_{tt}+f_p \quad &&\text{in} \ \Omega \times (0,T), \\[2mm]
						&\ \theta_t -\kappa\Delta \theta+  \wb(\theta+\Thetaa) \nu \theta = \calG(\pt, b(\theta+\Thetaa), q(\theta+\Thetaa))+f_\theta &&\text{in} \ \Omega \times (0,T), \\[2mm]
						&\ \ct+ \nabla \cdot (\bfv(p, \nabla p) c)- \nabla \cdot (D(p, \theta+\Thetaa) \nabla c) = \fc \,&&\text{in} \ \Omega \times (0,T),
					\end{aligned} 
				}
\end{equation}
			with
			\begin{equation} \tag{\ref{coupled_problem_BC_WPc}}
				p\vert_{\partial \Om}=0, \qquad \theta\vert_{\partial \Om}= 0, \qquad c\vert_{\partial \Om}=0,
			\end{equation}
			and the initial data 
			\begin{equation} \tag{\ref{coupled_problem_IC_WPc}}
				(p, p_t)\vert_{t=0}= (p_0, p_1), \qquad \theta \vert_{t=0}=\Theta_0- \Thetaa:= \theta_0, \qquad c\vert_{t=0}=c_0.
			\end{equation}
				Given $(p, \theta) \in \solspaceWestPennes$ that solves the Westervelt--Pennes subsystem, we now focus on 
				\begin{equation}\label{c_Equation}
					 \ct + \nabla \cdot (\bfv(p, \nabla p) c)- \nabla \cdot (D(p, \theta+\Thetaa) \nabla c) = \fc 
				\end{equation}
				with $c\vert_{\partial \Om}=0$ and $c\vert_{t=0}=c_0$. We next prove the main result of this work.
	
				\begin{proof}[Proof of Theorem~\ref{thm: West Pennes concentration}]
				
			The well-posedness of the Westervelt--Pennes system in \eqref{coupled_problem West Pennes concentration} follows by Theorem~\ref{thm: West Pennes}.	The well-posedness for the concentration follows similarly to \cite[Theorem 6.3]{hunterLectureNotesPartial} using a Faedo--Galerkin approach;   we include here the derivation of the estimate.	Multiplying the third equation by $c$ and integrating over $\Omega$, using integration by parts, we obtain 
				\begin{equation}\label{Energy_Est_c}
					\frac{1}{2}\ddt \|c\|^2_{\Ltwo}+\int_\Omega D(p, \theta+\Thetaa)\nabla c\cdot \nabla c\dx=-\int_\Omega \bfv(p, \nabla p) c\cdot \nabla c\dx+\int_\Omega c \fc \dx
				\end{equation}
				a.e. in time. We have
				\begin{equation}
				\int_\Omega \bfv(p, \nabla p) c\cdot \nabla c\dx \leq \|\bfv(p, \nabla p)\|_{\Lfour}\|c\|_{\Lfour}\|\nabla c\|_{\Ltwo}.
				\end{equation}
					Using Ladyzhenskaya's inequality:
				\begin{equation}\label{Lady_Inequality_2}    
					\| c\|_{L^4(\Omega)} \leq C \|\nabla c\|_{L^2(\Omega)}^{d/4}\|c\|_{L^2(\Omega)}^{1-d/4}, 
				\end{equation}
				we obtain 
				\begin{equation}
					\begin{aligned}
						\left|\int_\Omega \bfv(p, \nabla p) c\cdot \nabla c\dx\right|\leq\,&\, \|\bfv(p, \nabla p)\|_{\Lfour}\|c\|_{\Ltwo}^{d/4}\|\nabla c\|_{\Ltwo}^{2-d/4}.
					\end{aligned}
				\end{equation}
		 		Then Young's inequality implies    
				\begin{equation}
					\begin{aligned}
						\left|\int_\Omega \bfv(p, \nabla p) c\cdot \nabla c\dx\right|\lesssim \,&\, \|\bfv(p, \nabla p)\|_{\Lfour}^4\|c\|_{\Ltwo}^{2}+\eps_0 \|\nabla c\|_{\Ltwo}^{2}
					\end{aligned}
				\end{equation}
			for any $\eps_0>0$.	We can further bound the velocity
				\begin{equation} 
					\begin{aligned}
					 \|\bfv(p, \nabla p)\|_{\Lfour} \lesssim&\, 1+\|p\|_{\Lfour} + \|\nabla p\|_{\Lfour}
					 \lesssim\, 1+\|p\|_{\Hone} + \| p\|_{\Htwo} \\
					 \lesssim&\, 1+ \Rp.
					\end{aligned}
				\end{equation} 
		Furthermore, again on account of Young's inequality, we have
				\begin{equation}
					\begin{aligned}
						\intt \int_\Omega c \fc \dxs \leq  \frac{1}{4 \eps}\|\fc\|^2_{\LonetLtwo}+ \eps \|c\|^2_{\LinftLtwo}
					\end{aligned}
					\end{equation}
				for any $\eps>0$. Thus, after also integrating in time in \eqref{Energy_Est_c} and employing 
				\begin{equation} \label{splitting}
					\begin{aligned}
						\inttO D(p, \theta+\Thetaa) \nabla c \cdot \nabla c \dxs= 	\inttO (D_0 + \calD(p, \theta)) \nabla c \cdot \nabla c \dxs,
					\end{aligned}
				\end{equation}
			on account of Assumption~\ref{assumptions: D v},	 we arrive at
				\begin{equation} \label{interim est concentration}
					\begin{aligned}
						&\|c(s)\|^2_{\Ltwo} \big \vert_0^t + \intt D_0 \|\nabla c(s)\|^2_{\Ltwo} \ds \\
						\lesssim&\, \begin{multlined}[t]\intt (1+\Rp)\|c\|^2_{\Ltwo}\ds + \eps_0 \intt\|\nabla c\|^2_{\Ltwo}\ds+\|\fc\|^2_{\LoneLtwo}+ \eps \|c\|^2_{\LinftLtwo}\\+	\intO \calD(p, \theta) \nabla c \cdot \nabla c \dx.
							\end{multlined}
					\end{aligned}
				\end{equation}
				We can then further estimate the $\calD$ term on the right-hand side as follows:
				\begin{equation}\label{D_estimate}
					\begin{aligned}
		\intO	 \calD(p, \theta) \nabla c \cdot \nabla c \dx \leq&\, C (\|p\|_{\Linf}+\|\theta\|_{\Linf}) \intO \nabla c \cdot \nabla c \dx \\
		\leq&\,  C(\Rp+\Rtheta) \intO  |\nabla c|^2\dx.
		\end{aligned}
				\end{equation}
			Above we have relied on Assumption~\ref{assumptions: D v} to conclude that
				\[
				\begin{aligned}
					\|\calD(p, \theta) - \calD(0,0)\|_{\Linf} \lesssim \|(p, \theta)\|_{\Linf}.
				\end{aligned}
				\]

		For sufficiently small $\Rp$, $\Rtheta$, and $\eps_0$, we can guarantee that
				\begin{equation}
					\begin{aligned}
						\intO D_0 \nabla c \cdot \nabla c \dx \geq d_0 \intO |\nabla c|^2 \dx> C(\Rp+\Rtheta) \intO  |\nabla c|^2\dx+ \eps_0 \intt\|\nabla c\|^2_{\Ltwo}\ds.
					\end{aligned}
				\end{equation}
				After making use of this estimate in \eqref{interim est concentration}, we arrive at  
				\begin{equation}
					\begin{aligned}
						&\|c(s)\|^2_{\Ltwo} \big \vert_0^t + \intt \|\nabla c(s)\|^2_{\Ltwo} \ds \\
						\lesssim&\, \intt (1+\Rp)\|c\|^2_{\Ltwo}\ds+\|\fc\|^2_{\LoneLtwo}+ \eps \|c\|^2_{\LinftLtwo}.
					\end{aligned}
				\end{equation} 
				Taking the supremum over $t \in (0,\tau)$ for $\tau \in (0,T)$ and then fixing $\eps$ small and employing \Gronwall's inequality leads to
				\begin{equation}\label{c_Main_Estimate}
					\begin{aligned}
						\|c(\tau)\|^2_{\Ltwo} + \int_0^{\tau} \|\nabla c(s)\|^2_{\Ltwo} \ds \lesssim_T \|\czero\|^2_{\Ltwo} + \|\fc\|^2_{\LoneLtwo}.
					\end{aligned}
				\end{equation}
By using this estimate (in a semi-discrete Galerkin setting), it can be shown that there exist a unique $c \in \Xc$ that solves the advection-diffusion equation supplemented with \eqref{coupled_problem_BC_WPc} and \eqref{coupled_problem_IC_WPc}. Note that also $\ct \in \LoneTHneg$ can be shown using duality arguments, analogously to e.g.~\cite[Ch.\ 7]{evansPartialDifferentialEquations2010}. 
\end{proof}

\newcommand{\testw}{\phi_{h}}
\newcommand{\testh}{\phi_{h}}
\newcommand{\testc}{\phi_{h}}
\newcommand{\femspace}{\mathbb{P}_l^{\rm cont}(\Omega)}

\section{Numerical simulation of the Westervelt--Pennes--concentration system} \label{sec: Numerics}

In this section, we discuss numerical approximation of the studied pressure-temperature-concentration system, and present several numerical experiments that highlight different nonlinear phenomena. The memory kernel in all experiments is taken to be $\frakK(t) = \frac{1}{\Gamma(1-\alpha)}t^{-\alpha}$, so that the acoustic dissipation involves the Caputo--Djrbashian fractional derivative. \\
\indent To illustrate the properties of system~\eqref{coupled_problem original system}, we consider a two-dimensional spatial setting. For semi-discretization in space, we employ an $H^1$-conforming finite element  approximation for the pressure, temperature, and concentration using a regular mesh triangulation of the domain $\Omega$. For the time integration, we employ a fixed-point iteration for the wave equation together with a Newmark scheme, analogously to~\cite[Sec.\ 5.4]{kaltenbacherNumericalSimulationMechatronic2015}. In combination, to discretize the Caputo--Djrbashian derivative, we employ an L1-type scheme, following~\cite{kaltenbacherFractionalTimeStepping2022}. The temperature-pressure terms are treated in a semi-implicit manner so that the heat equation can be solved sequentially.
We then employ a semi-implicit discretization in time for the heat  and implicit Euler discretization for the concentration equation.  The details are provided below in Section~\ref{Sec: details scheme}. The numerical experiments are implemented in the finite element library FEniCSx~\cite{barattaDOLFINxNextGeneration2023}. \footnote{All codes are available at
\href{https://github.com/juliocareaga/wave-heat-mass.git}{https://github.com/juliocareaga/wave-heat-mass.git}
}
\subsection{Numerical scheme} \label{Sec: details scheme} For completeness, we describe here in more detail the fully discrete numerical scheme we adopt. Let $\femspace$ be the space of piecewise polynomials of total degree less than or equal to $l\geq 1$ on the triangulation of $\Omega$, which are continuous across the edges of each triangle. In addition, given a time step size $\tau>0$, we define the discrete time $t^{n} = n\tau$ for all $n\in\mathbb{N}$, and use below the superscript notation $(\cdot)^n$  to denote time evaluations. \\

\noindent $\bullet$ {Westervelt's equation}.  Following~\cite[Sec.\ 5.4]{kaltenbacherNumericalSimulationMechatronic2015}, we use a predictor-corrector form of the Newmark time stepping scheme for the Westervelt equation. The predictor variables for the Newmark scheme with positive parameters $(\beta, \gamma)$ at time $t = t^{n+1}$ are defined as follows:
\begin{align}
	\widetilde{p}_h^{n+1}     & = p_h^{n} + \tau p_{h,t}^n + (\tfrac{1}{2} - \beta)\tau^2  p_{h,tt}^{n}, \\
	\widetilde{p}_{h,t}^{n+1} & = p_{h,t}^{n} + (1-\gamma)\tau p_{h,tt}^{n},
\end{align}
and the respective corrector variables are
\begin{align}
	p_h^{n+1}     & = \widetilde{p}_h^{n+1} + \beta \tau^2 p_{h,tt}^{n+1}, \label{eq:corrector:p}\\
	p_{h,t}^{n+1} & = \widetilde{p}_{h,t}^{n+1} + \gamma\tau p_{h,tt}^{n+1} \label{eq:corrector:pt}.
\end{align}
 Following \cite{kaltenbacherFractionalTimeStepping2022}, the non-local acoustic dissipation term in \eqref{coupled_system West Pennes concentration} is approximated by an L1-type scheme based on
 \begin{equation}
 	\begin{aligned}
 		\Dtalpha \ph(t_{n+1}) =&\, \frac{1}{\Gamma(1-\alpha)} \sum_{j=0}^{n}\int_{t_j}^{t_{j+1}} \pht(s)(t_{n+1}-s)^{-\alpha} \ds \\
 		\approx&\,  \frac{1}{\Gamma(1-\alpha)} \sum_{j=0}^{n} \frac{\pht(t_j)+\pht(t_{j+1})}{2}\int_{t_j}^{t_{j+1}}(t_{n+1}-s)^{-\alpha} \ds \\
 		=&\,  \frac{1}{2\Gamma(2-\alpha)} \sum_{j=0}^{n} \left(\pht(t_j)+\pht(t_{j+1})\right)\left((t_{n+1}-t_j)^{1-\alpha}-(t_{n+1}-t_{j+1})^{1-\alpha}\right),
 	\end{aligned}
 \end{equation}
 from which we have
  \begin{equation}
 	\begin{aligned}
 		\Dtalpha \ph(t_{n+1})
 		\approx &\, \begin{multlined}[t] \frac{1}{2\Gamma(2-\alpha)} \sum_{j=0}^{n} \pht(t_j)\left((t_{n+1}-t_j)^{1-\alpha}-(t_{n+1}-t_{j+1})^{1-\alpha}\right)\\
 			+	\frac{1}{2\Gamma(2-\alpha)} \sum_{j=1}^{n+1} \pht(t_{j})\left((t_{n+1}-t_{j-1})^{1-\alpha}-(t_{n+1}-t_{j})^{1-\alpha}\right). \end{multlined}
 	\end{aligned}
 \end{equation}
 We thus employ the following discrete convolution:
\begin{align}
	\tau^{\alpha-1}\Delta \Dtalpha \ph(t_{n+1})
	\approx&\,  \zeta_0^{n+1}\Delta p_{h,t}^{n+1} + \zeta_1^{n+1}\Delta p_{h,t}^{n} + \zeta_2^{n+1}\Delta p_{h,t}^{n-1}
	+ \cdots + \zeta_{n+1}^{n+1}\Delta p_{h,t}^{0},
\end{align}
where
\begin{equation}
\begin{aligned}
	\zeta_j^{n+1} := \dfrac{1}{2\Gamma(2-\alpha)}
\scriptscriptstyle
\begin{cases}
\	1  &\text{ if }j= 0,\\
\	(j+1)^{1-\alpha} - (j-1)^{1-\alpha}& \text{ if }j = 1,\dots,n,\\
\	(n+1)^{1-\alpha} - n^{1-\alpha} & \text{ if }j=n+1,
\end{cases}
\end{aligned}
\end{equation}
Using the corrector expression in \eqref{eq:corrector:pt}, we have
\begin{align}
	\tau^{\alpha-1}\Delta \Dtalpha \ph(t_{n+1})
	\approx&\,  \zeta_0^{n+1}\Delta\big(\widetilde{p}^{n+1}_{h,t} + \tau \gamma p_{h,tt}^{n+1}\big) + \sum_{j=1}^{n+1}\zeta_j^{n+1}\Delta p_{h,t}^{n+1-j}\\
	=&\, \tau\gamma\zeta_0^{n+1} \Delta p_{h,tt}^{n+1} + \Delta\big(\zeta_0^{n+1}\widetilde{p}_{h,t}^{n+1} +  \Upsilon_{h}^n\big),
\end{align}
where we have introduced the short-hand notation
\begin{align}
	& \Upsilon_h^n := \Upsilon_h^n\big(p^0_{h,t},p^1_{h,t},\dots,p^n_{h,t}\big)=  \sum_{j=1}^{n+1}\zeta_j^{n+1} p_{h,t}^{n+1-j}. %
\end{align}
Then, given $p_h^{n}, \theta_{h}^{n}, c_{h}^{n}\in\femspace$ and additionally $ p_{h,t}^{n}, p_{h,tt}^{n}\in \femspace$, the iterative procedure from time $t^n$ to $t^{n+1}$, is the following. In the first step, we solve the nonlinar wave equation in \eqref{coupled_system West Pennes concentration} through a fixed-point iteration procedure. For the $\ell$-th iteration, we set the auxiliary variables $p_*^{\ell} = \widetilde{p}_{h}^{n+1}$ and $p_{*t}^{\ell} = \widetilde{p}_{h,t}^{n+1}$ and solve the following linearized equation for the second derivative $p_{*tt}^{\ell+1}\in \femspace$: 
\begin{align}
	\begin{aligned}
		&\begin{multlined}[t]	\left(  \widetilde{\frakm}(\theta_h^{n},p_*^{\ell}) p_{*tt}^{\ell+1},  \testw\right)_{L^2} +
			\beta	\tau^2\left(  \nabla p_{*tt}^{\ell +1}, \nabla \left(
			\widetilde{q}(\theta_h^n) \testw \right)\right)_{L^2}
			+\gamma	\tau^{2-\alpha}\left(  \nabla p_{*tt}^{\ell +1}, \nabla \left(
			(  \widetilde{b}(\theta_h^n) \zeta_0^{n+1} )\testw \right)\right)_{L^2}
		\end{multlined} \\
		&=\, \begin{multlined}[t] \Big(2\widetilde{k}(\theta_h^n)(p_{*t}^{\ell})^2 + \widetilde{b}(\theta_h^n)f_{p}^{n+1}, \testw\,\Big)_{L^2}
			- \Big(\nabla \widetilde{p}_h^{n+1},  \nabla\big(\widetilde{q}(\theta_h^{n})\testw\big)\,\Big)_{L^2}\\ \hspace*{3.2cm}
			- \tau^{1-\alpha} \Big(\nabla\big(\zeta_0^{n+1}\widetilde{p}_{h,t}^{n+1} + \Upsilon_{h}^n\big), \nabla\big(\widetilde{b}(\theta_h^n)\testw\big)\, \Big)_{L^2} + \int_{\partial\Omega} \big( \mathbf{\Phi}_{p,h}^{n+1}\cdot\boldsymbol{n}\big)\testw\,{\rm d}\sigma
		\end{multlined}
	\end{aligned}
\end{align}
for all $\testw\in\femspace$, where we have used the short-hand notation $\widetilde{\frakm}(\theta,p) = 1 - 2\widetilde{k}(\theta)p$, with $\widetilde{b}(\theta) = b(\theta+\Thetaa)$, $\widetilde{q}(\theta) = q(\theta+\Thetaa)$, and $\widetilde{k}(\theta) = k(\theta + \Thetaa)$.
The function $\mathbf{\Phi}_{p,h}^{n+1}$, defined on the boundary $\partial\Omega$, groups the remaining boundary terms related to the integration by parts, and is specified later according to the Neumann boundary conditions stated in Example 1. Vector $\boldsymbol{n}$ is the outward unit normal vector on $\partial\Omega$. Note that the numerical scheme for the Westervelt equation is consistent with \eqref{Westervelt_eq} prior to dividing by $b(\Theta)$. \\
\indent Then, the auxiliary variables are updated using the corrector formulae \eqref{eq:corrector:p} and \eqref{eq:corrector:pt}, replacing $p_{h,tt}^{n+1}$ by $p_{*tt}^{\ell +1}$:
\begin{equation}
	p_*^{\ell + 1}  = \widetilde{p}_{h}^{n+1} + \beta \tau^2 p_{*tt}^{\ell + 1},\qquad
	p_{*t}^{\ell + 1} = \widetilde{p}_{h,t}^{n+1} + \gamma \tau p_{*tt}^{\ell + 1}.
\end{equation}
The fixed-point iterations are carried out until the following stopping criterion is reached:
\begin{equation}
	\dfrac{\|p_{*tt}^{\ell +1} - p_{*tt}^{\ell}\|_{\Ltwo }}{\|p_{*tt}^{\ell+1}\|_{\Ltwo}} < \texttt{tol},
\end{equation}
and the values updated by setting $p_{h}^{n+1} = p_{*}^{\ell+1}$, $p_{h,t}^{n+1} = p_{*t}^{\ell+1}$, and $p_{h,tt}^{n+1} = p_{*tt}^{\ell +1}$.\\

\noindent $\bullet$ {Pennes equation}.  For the Pennes equation, we employ an implicit Euler approximation for the time derivative of $\theta$ combined with a semi-implicit discretization for the $\theta$-dependent functions $\omega_{\rm b}$ and $\mathcal{G}$.
Then, making use of $p_{h,t}^{n+1}$, we solve the temperature equation for $\theta_{h}^{n+1}\in \femspace$ as follows:
\begin{align}
	&( \theta_h^{n+1},\testh\,)_{L^2} + \tau \kappa \left( \nabla\theta_h^{n+1},\nabla\testh \,\right)_{L^2} + \tau\left( \nu \omega_{\rm b}(\theta_h^n + \Theta_{\rm a})\theta_h^{n+1},\testh\,\right)_{L^2}\\
	 =&\,  \left( \theta_h^{n} + \tau \mathcal{G}\left(p_{h,t}^{n+1}, \theta_h^n\right) + \tau f_{\theta}^{n+1},\testh\,\right)_{L^2}
\end{align}
for all $\testh\in\femspace$, where we have assumed homogeneous Neumann boundary conditions for $\theta$ on $\partial\Omega$. The same homogeneous Neumann boundary condition used in all numerical examples.\\

\noindent $\bullet$  {Equation for the concentration}.   To solve the concentration equation, we once again use an implicit Euler approximation for the time derivative of $c$, and use the updated heat and pressure to solve the discrete mass balance for $c_h^{n+1}\in \femspace$ as follows:
\begin{align}
	&( c_h^{n+1},\testc\,)_{L^2} + \tau\left(c_h^{n+1}\boldsymbol{v}(p_h^{n+1},\nabla p_h^{n+1}) -  D(p_h^{n+1}, \theta_h^{n+1} + \Theta_{\rm a})\nabla c_h^{n+1}, \nabla\testc\,\right)_{L^2}\\
	 =&\,  \left(c_h^{n} + \tau f_{c}^{n+1}, \testc\,\right)_{L^2} + \int_{\Gamma_{\rm b}} \big(\mathbf{\Phi}_{c,h}^{n+1}\cdot\boldsymbol{n}\big)\testc\,{\rm d}\sigma
\end{align}
for all $\testc\in\femspace$, where $\mathbf{\Phi}_{c,h}^{n+1}$ contains the boundary terms on $\partial\Omega$ related to the concetration $c$ to be specified in Example 1.\\

In all the numerical examples presented in this section, we use the Newmark parameters $\beta = 0.45$ and $\gamma = 0.85$, piecewise linear polynomials as basis functions (that is, we take $l=1$), and the fractional order of differentiation $\alpha = 0.8$. The tolerance for the fixed-point method is set to $\texttt{tol} = 10^{-12}$.

\subsection{Example 1: Effects of high-intensity focused ultrasound} \label{numex:1}

In this example, we simulate the transport of a chemical substance (drug) enhanced by ultrasound waves through soft tissue. The medium in this scenario is assumed to be liver tissue, and the two-dimensional domain $\Omega$ is delimited by the rectangle $[-0.04\,{\rm m},0.04\,{\rm m}]\times[0, 0.12\,{\rm m}]$ joint to a circular section as shown in Figure~\ref{fig:domain}. The curved part of the boundary, denoted by $\Gamma_{\rm b}$, corresponds to a circle with center at $x_{\rm c} = (0.04\,{\rm m}, 0.03 \,{\rm m})$ and radius $R = 0.05\,{\rm m}$. The domain is discretized with a mesh  of $40192$ triangular elements, and the time step chosen for the example is $\tau = 6.67\times 10^{-8}\,\rm s$. \vspace*{-4mm}
\begin{figure}[h]
	\includegraphics[scale=0.6]{./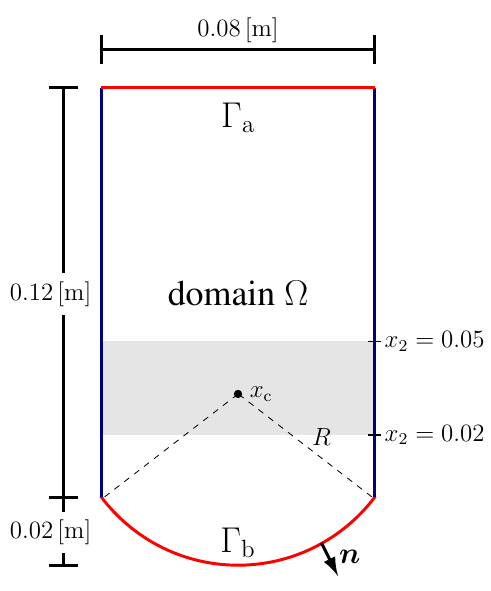}
	\caption{Computational domain $\Omega$: The top boundary at $x_2 = 0.12\,\rm m$ is denoted by $\Gamma_{\rm a}$, and the curved bottom boundary for $x_2\leq 0$ is symbolized by $\Gamma_{\rm b}$, and $\boldsymbol{n}$ corresponds to the unit normal exterior to $\Omega$. The gray zone corresponds to the region for $0.02\,{\rm m}\leq x_2 \leq 0.05\,\rm m$.\label{fig:domain}}
\end{figure}

\indent Compared to the theoretical setting, we choose here boundary conditions that allow for a more realistic simulation scenario.
On the curved boundary, a high frequency excitation is applied according to the Neumann boundary condition
\begin{align}
	\nabla p\cdot\boldsymbol{n} = g,\quad \mbox{on }\Gamma_{\rm b},
\end{align}
dictated by the time-dependent function
\begin{align}
	g(t) = \begin{cases}
		g_0 \sin(\omega t) (1 + \sin(\omega t/4)) & \text{ if } t >2\pi/\omega,\\
		g_0 \sin(\omega t) & \text{ if } t \le 2\pi/\omega,
	\end{cases}
\end{align}
where $\omega = 2\pi \frakf$ is the angular frequency, with $\frakf = 100\, {\rm kHz}$ the frequency of excitation, and $g_0 = 1000\,{\rm MPa}$ is the amplitude of oscillation. The source of this boundary condition could be, for example, a transducer located outside the curved boundary. 
The Neumann boundary condition is then incorporated to the discrete weak formulation of the Westervelt equation through the function $\mathbf{\Phi}_{p,h}^{n+1}$, which is defined according to
\begin{align}
 \mathbf{\Phi}_{p,h}^{n+1}\cdot \boldsymbol{n} = g(t^{n+1})q(\Theta_{\rm a}) + \tau^{1-\alpha} b(\Theta_{\rm a})\sum_{j=0}^{n+1}\zeta_{j}^{n+1}g'(t^{n+1-j})\quad \text{on }\Gamma_{\rm b},
\end{align}
and $\mathbf{\Phi}_{p,h}^{n+1}\cdot\boldsymbol{n} = 0$ on $\partial\Omega\setminus\Gamma_{\rm b}$.
In addition, we assume that the drug concentration is being constantly injected on the curved boundary, through the inflow boundary condition
\begin{align}
	 (c\boldsymbol{v} - D\nabla c)\cdot \boldsymbol{n} = \widetilde{g}\quad \mbox{on }\Gamma_{\rm b},
\end{align}
where $\widetilde{g} = 0.01\,{\rm kg\,m^{-2}\,s}$. Then, the corresponding discrete flux is $\mathbf{\Phi}_{c,h}^{n+1}\cdot \boldsymbol{n} = \widetilde{g}$ on $\Gamma_{\rm b}$, and zero-flux boundary conditions are used outside $\Gamma_{\rm b}$, i.e., $\mathbf{\Phi}_{c,h}^{n+1}\cdot\boldsymbol{n} = 0$ on $\partial\Omega\setminus\Gamma_{\rm b}$.
\\
\indent The temperature-dependent speed of sound squared, sound diffusivity, and mass flow rate of blood per unit volume of tissue are respectively taken to be
\begin{align}
	q(\Theta) & = \big(1529.3 + 1.6856\Theta + 6.1131\times 10^{-2} \Theta^2\\
	&\qquad - 2.2967 \times 10^{-3} \Theta^3 + 2.2657 \times 10^{-5}\Theta^4 - 7.1795 \times 10^{-8} \Theta^5\big)^2,\\
	b(\Theta) & = \dfrac{2\alpha_0}{\omega^2}(q(\Theta))^{3/2},\\
	\omega_{\rm b}(\Theta) & = 10^{-4}(5 + \Theta).
\end{align}
Function $q$ corresponds to the squared speed of sound in liver tissue reported in \cite{connorBioacousticThermalLensing2002}, and the linear blood perfusion function $\omega_{\rm b}$ is taken from \cite{dengParametricStudiesPhase2000}. The ambient temperature is set to $\Theta_{\rm a} = 37\, ^\circ{\rm C}$, and the rest of parameters corresponding to the wave and heat equations are taken from \cite[Table~3]{connorBioacousticThermalLensing2002} to be
\begin{align}
	&\beta_{\rm a} = 6\,{\rm kg^3\,m^{-4}\,s^{-2}}, 
	\quad \rho_{\rm a} =1050\, {\rm kg\,m^{-3}}, \quad \rho_{\rm b} = 1030\, {\rm kg\,m^{-3}},\quad  \alpha_0  = 4.5\times 10^{-6}f\,{\rm Np \, m^{-1}}, \\
	&C_{\rm a}  = 3600\, {\rm J\,kg\, K^{-1}},\quad C_{\rm b} = 3620\,{\rm J\,kg\, K^{-1}},\quad \kappa_{\rm a} = 0.512\, {\rm W\,m^{-1}\,K^{-1}},
\end{align}
and the remaining constants in the heat equation are set to
\begin{align}
	\kappa & =  \dfrac{\kappa_{\rm a}}{\rho_{\rm a}C_{\rm a}},\qquad \nu = \dfrac{\rho_{\rm b}C_{\rm b}}{\rho_{\rm a}C_{\rm a}}, \qquad \tilde{\zeta} = 2.
\end{align}
Concerning the concentration equation, we note that the time scale
reported in related works (see, e.g., \cite{Sefidgar2015,azhdariFickianNonFickianTransports2022}) is several orders of magnitude larger than the wave-heat sub-system; that is, the wave equation requires a much smaller time step to be used than the concentration equation. In \cite{Sefidgar2015}, the convective velocity, which is described by Darcy's law, posses a factor of $10^{-15}\,{\rm m^2\, Pa^{-1}\, s^{-1}}$ multiplying the gradient of the pressure, while the constant diffusion is $10^{-12}\,\rm m^2/s$. Hence, in order to capture the influence of the pressure on the transport equation, we take the following convective velocity and diffusion functions:
\begin{align}
	\boldsymbol{v}(p,\nabla p) & = -k_{\rm D}\nabla p,\qquad
	D(p,\theta)  = D_0,
\end{align}
where $ k_{\rm D} = 10^{-6}\,{\rm m^2\, Pa^{-1}\, s^{-1}}$ and $D_0 = 5\,{\rm m^2\, s^{-1}}$.

Figure~\ref{fig:example1:1} shows the numerical simulation of the pressure field until the final time $T= 0.0001\, \rm s$. We observe the self-focusing as the ultrasound wave travel with the peak values, whose precise modeling is important in therapeutic applications, in the focal region.  \\

\begin{figure}[h]
	\centering
	\begin{tabular}{cc}
		\includegraphics[scale=0.8]{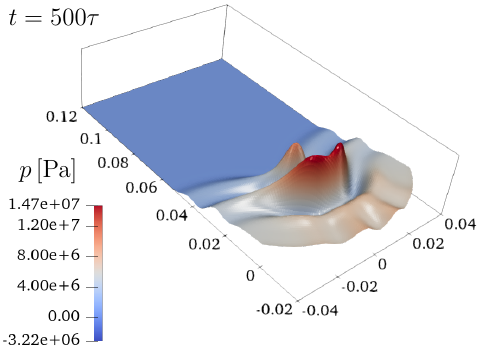} &
		\includegraphics[scale=0.8]{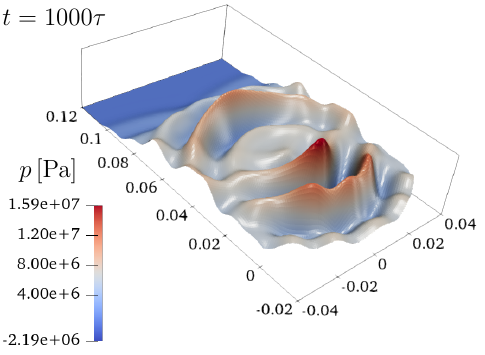}\\[2ex]
		\includegraphics[scale=0.8]{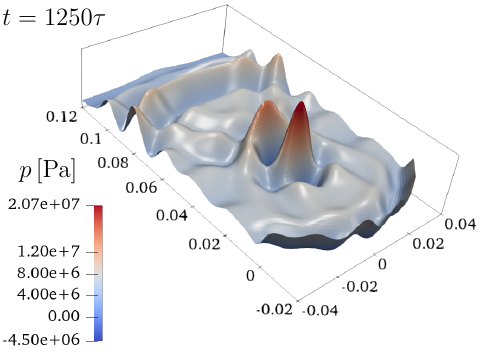} &
		\includegraphics[scale=0.8]{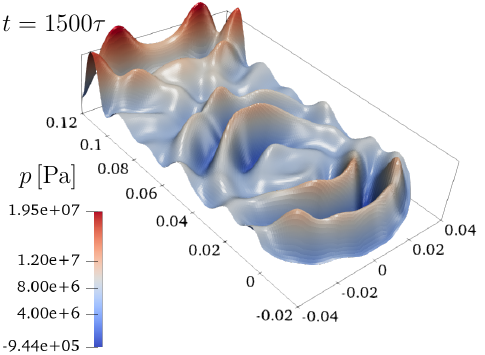} 
	\end{tabular}
	\caption{Example 1: Pressure waves at four times $t=500i\tau$, where $i = 1,2,3/2,3$ with $\tau = 6.67\times 10^{-8}\,\rm s$\label{fig:example1:1}}
\end{figure}
Figure~\ref{fig:example1:1 axis} illustrates the difference of having $k=0$ and $k=k(\Theta)$ in the acoustic wave equation. We observe that the latter setting with Westervelt-type acoustic nonlinearities leads to higher positive peak pressure values. 

\begin{figure}
	\centering
	\begin{tabular}{cc}
			\multicolumn{2}{c}{\includegraphics[scale=0.4]{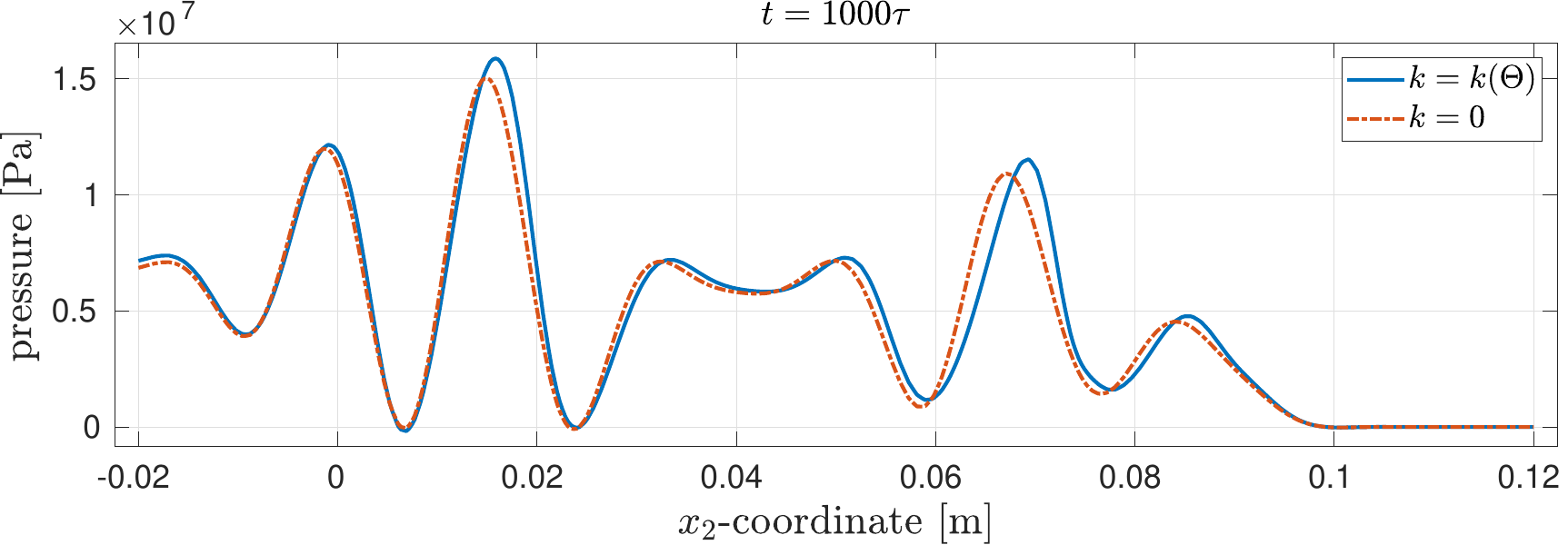}}
	\end{tabular}
\caption{Example 1: Pressure over the axis of symmetry $x_1 = 0$ at the time $t = 1000\,\tau$ obtained using the nonlinear Westervelt equation ($k = k(\Theta)$) and the linear wave equation ($k=0$), respectively.\label{fig:example1:1 axis}}

\end{figure}

\begin{figure}[h]
	\centering
	\begin{tabular}{ccc}
		\includegraphics[scale=0.78]{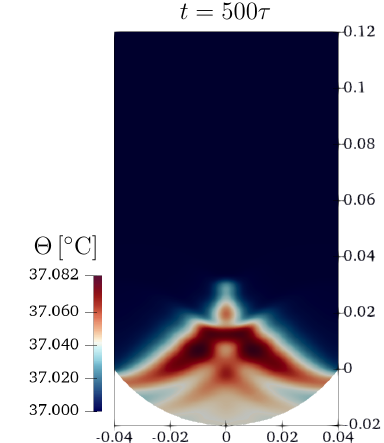} &
		\hspace{-0.3cm}\includegraphics[scale=0.78]{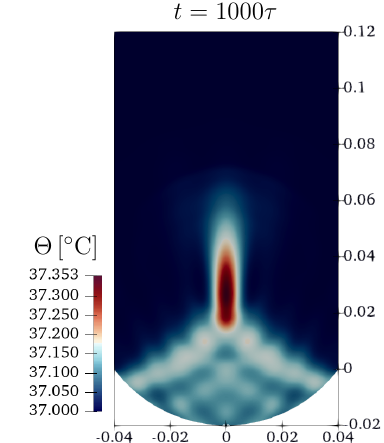} &
		\hspace{-0.3cm}\includegraphics[scale=0.78]{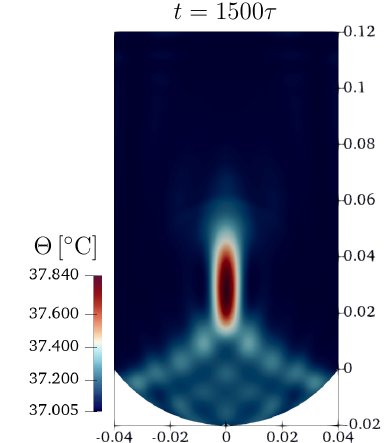}\\[1ex]
		\includegraphics[scale=0.78]{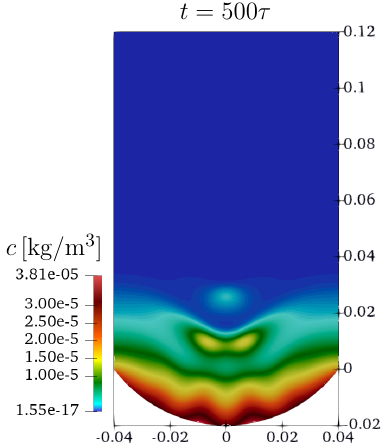} &
		\hspace{-0.3cm}\includegraphics[scale=0.78]{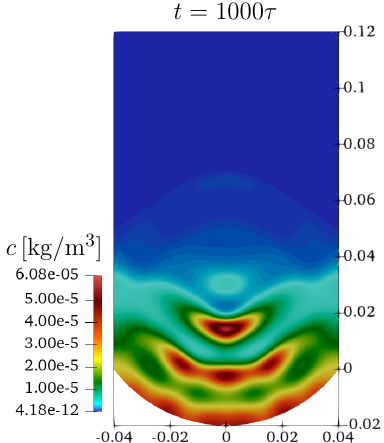} &
		\hspace{-0.3cm}\includegraphics[scale=0.78]{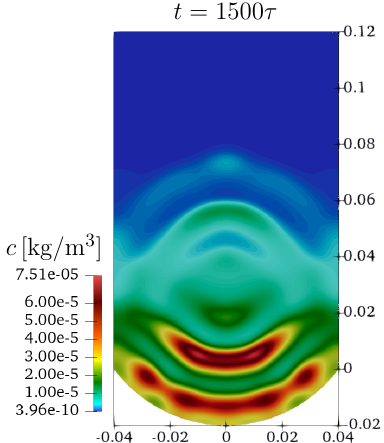}\\[1ex]
	\end{tabular}
	\caption{Example 1: Snapshots of temperature and concentration profiles (top and bottom row, respectively) at times $t= 500i\tau$ for $ i= 1,2,3$ with $\tau=6.67\times 10^{-8}\,\rm s$.\label{fig:example1:2}}
\end{figure}

\indent  The temperature and concentration fields are shown in Figure~\ref{fig:example1:2}. We see that they increase from $\Gamma_{\rm b}$ toward the focal region and both tend to decrease afterward. This observation aligns with the fact that ultrasound intensity is the highest in the focal area.

\subsection{Example 2:  Thermal lensing} \label{numex:2}
In the second example, we study the influence of having the temperature-dependent coefficients in the  wave equation. More precisely, we consider two cases, the Westervelt equation with temperature-dependent coefficients, and the case of no temperature variations, i.e., $\Theta(x,t) \equiv \Theta_{\rm a}$ and $q \equiv q(\Thetaa)$, $b\equiv b(\Thetaa)$, and $k \equiv k(\Thetaa)$. \\
\indent For the two cases, we use the same parameters $\omega$, $\frakf$, $g_0$, $\widetilde{g}$, $\beta_{\rm a}$, $\rho_{\rm a}$, $\rho_{\rm b}$, $\alpha_{0}$, $C_{\rm a}$, $C_{\rm b}$, $\kappa_{\rm a}$, $\tilde{\zeta}$, $k_{\rm D}$, $D_0$, and $\tau$, and functions introduced in Example 1. \\
\indent Figure~\ref{fig:example2} shows the acoustic pressure plots at six different time points, once every 250 time iterations, for the two cases. The left half of the domain corresponds to the case under temperature variations, i.e., the fully coupled PDE system \eqref{coupled_problem original system}, while the right half is the case of constant temperature. From Figure~\ref{fig:example2}, it is clearly seen that the wave propagates faster in the case of variable temperature. As a consequence of that, thermal lensing can be observed; that is, a shift in the focal are of the ultrasound waves. 

\begin{figure}[h]
	\centering
	\begin{tabular}{ccc}
		\includegraphics[scale=0.78]{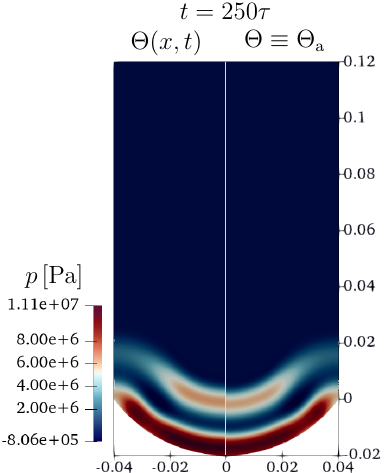} &
		\hspace{-0.3cm}\includegraphics[scale=0.78]{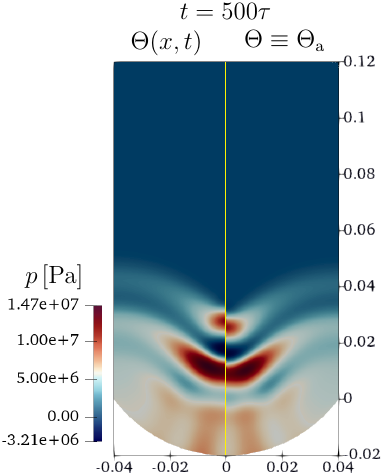} &
		\hspace{-0.3cm}\includegraphics[scale=0.78]{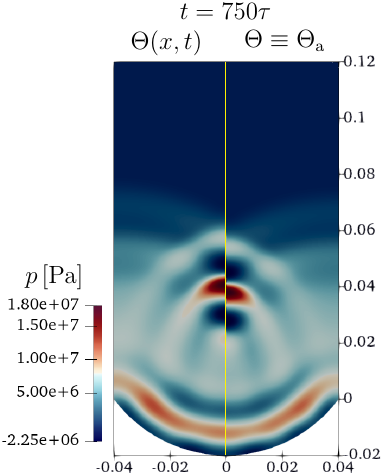}\\[1ex]
		\includegraphics[scale=0.78]{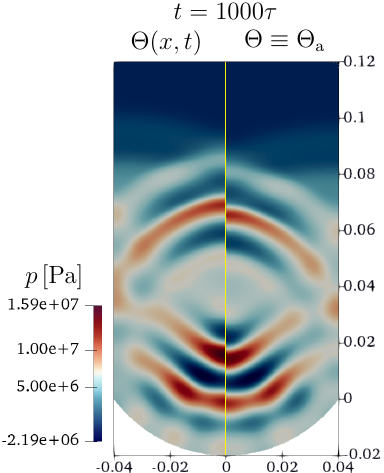} &
		\hspace{-0.3cm}\includegraphics[scale=0.78]{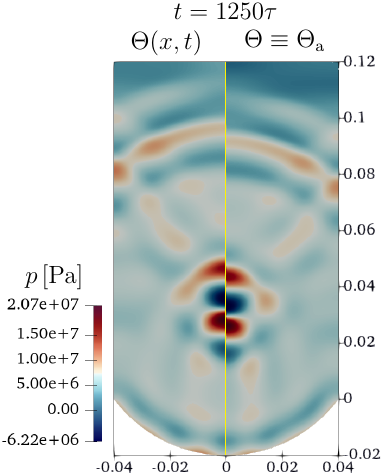} &
		\hspace{-0.3cm}\includegraphics[scale=0.78]{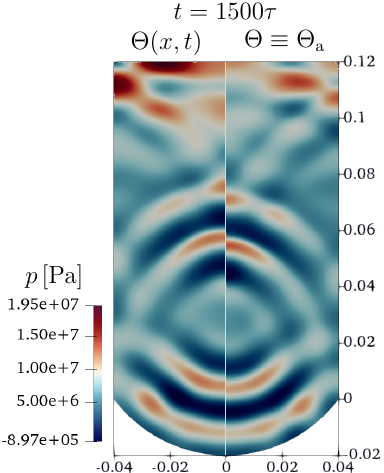}\\[1ex]
	\end{tabular}
	\caption{Example 2 (Thermal lensing): Snapshots of pressure profiles for two different scenarios, with variation in temperature (left halves) and constant temperature $\Theta = \Theta_{\rm a}$ (right halves)
		at times $t= 250i\tau$ for $i=1,2,\dots,6$, with $\tau=6.67\times 10^{-8}\,\rm s$.\label{fig:example2}}
\end{figure}

\subsection{Example 3: Changes in the mass variation} \label{numex:3}

In the spirit of the numerical study of the pressure-concentration system in~\cite{azhdariFickianNonFickianTransports2022}, we examine also the total mass variation defined by
\[
m(t) = \int_{\tilde{\Omega}} c(\cdot,t) \dx
\]
 in two cases, including the ultrasound effect and without ultrasound. For comparison purposes, we compute the mass on the whole domain $\tilde{\Omega}=\Omega$ and on a subset $\tilde{\Omega}$ of $\Omega$ containing the focal region, taken to be the rectangle where $x_2\in (0.02, 0.05)\,\rm m$; see the gray zone in Figure~\ref{fig:domain}. We consider the same scenario as in Example 1, but now we have a different boundary condition for the concentration equation at the top boundary $\Gamma_{\rm a}$; see Figure~\ref{fig:domain}.
 We set 
\begin{align}
	\mathbf{\Phi}_{c,h}^{n+1} \cdot \boldsymbol{n} = -100c_h^{n+1} \quad \text{on }\Gamma_{\rm a}.
\end{align}
For the inflow boundary condition at $\Gamma_{\rm b}$, we set $\widetilde{g} = 5\times 10^{-3}\, \rm kg\,m^{-2}$. The effect of having the two chosen boundary conditions is that mass is being injected at the bottom as well as being extracted at the top. For the convective velocity $\boldsymbol{v}$, we incorporate a constant term $v_0$ here, such that
\begin{align}
	\boldsymbol{v}(\nabla p) = \boldsymbol{v}_0 - k_{\rm D}\nabla p,
\end{align}
with $\boldsymbol{v}_0 = (0,10)^{T}\,{\rm m/s}$, and $k_{\rm D}$ as in Example 1. For the diffusion term we use a constant function with $ D_0 = 5.0\,\rm m^2/s$. Unlike the previous examples, the initial condition for the concentration is taken as $c_0(x,0) = 10^{-4}\,\rm kg/m^3$ for all $x\in \Omega$. Moreover, we set the time step to $\tau = 10^{-6}\,\rm s$ and final time of simulation to $T = 0.0005\,\rm s$. The constants and functions for the wave and heat equations are the same as in Example 1.

\begin{figure}[h]
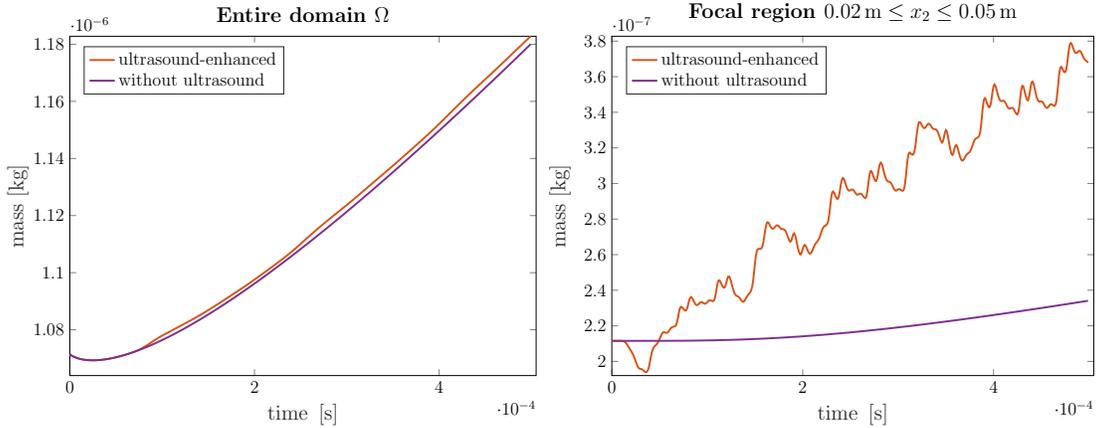

	\centering
	\begin{tabular}{cc}
		\scalebox{0.56}{\input{IMG/plotmass1_2curves.tex} }\hspace*{-0.1cm}
		\scalebox{0.56}{\input{IMG/plotmass7_2curves.tex}}
	\end{tabular}
	\caption{Example 3 (The influence of ultrasound):  Plots of mass $m=m(t)$ over the entire domain $\Omega$ (left) and focal region $0.02\,{\rm m}\leq x_2\leq 0.05\,{\rm m}$ (right) with respect to time for the case with ultrasound enhancement and without ($\boldsymbol{v} = \boldsymbol{v}_0$).\label{fig:example3}}
\end{figure}

The two scenarios we compare are the coupled case with nonlinear ultrasound enhancement modeled by the studied system and the case without ultrasound enhancement, in which $p\equiv 0$ and $\nabla p = \boldsymbol{0}$ in the concentration equation.  Figure~\ref{fig:example3} shows the total mass in the entire domain $\Omega$ as a function of time and the mass in the focal region $\tilde{\Omega}$ where $0.02{\,\rm m}\leq x_2\leq 0.05{\,\rm m}$. Due to the boundary conditions on $\Gamma_{\rm b}$ and $\Gamma_{\rm a}$, the total mass (Figure~\ref{fig:example3}; left plot) in both cases increases over time with the exception of a small interval of decrease at the beginning. When comparing the ultrasound-enhanced and non-ultrasound setting, there is a small increase in the total mass with ultrasound-enhancement when mass is measured over the whole domain $\Omega$. However, if we consider the focal region, which is of practical interest in therapeutic applications and where the ultrasound is focused on, this increase is considerably larger compared to the case without ultrasound effect.


\section*{Conclusions and outlook}	
In this work, we have investigated the Westervelt--Pennes--concentration system with nonlocal acoustic attenuation, motivated by ultrasound-enhanced drug delivery. The local existence analysis of this nonlinear multiphysics problem was based on a fixed-point argument for the Westervelt--Pennes sub-problem combined with an-energy based analysis of its linearization.  As the nonlocal acoustic damping is relatively weak, sufficiently smooth acoustic test functions have to be employed in the analysis, leading to a higher regularity requirement on the initial pressure data compared to the setting of strong acoustic damping. On the other hand, parabolic evolution in the Pennes equation allowed us to devise, in part, a time-weighted testing strategy, thereby reducing the smoothness assumptions on the initial temperature. \\
\indent Numerical experiments have further reinforced the importance of accounting for different nonlinear effects in modeling. As we have seen, having Westervelt-type nonlinearities in the pressure equation resulted in higher positive peak pressures in the focal region. The temperature dependency in the acoustic parameters allows for capturing the thermal lensing effect where the position of the focal region may shift as a result of ultrasound-induced heating of the medium. Finally, the experiments also demonstrate that the ultrasound has a significant effect on the concentration mass in the focal region. \\ 
\indent The present work provides a sound basis for further research on nonlinear modeling of ultrasound-enhanced transport phenomena. There are various theoretical and computational open questions of interest to pursue.
\begin{enumerate}[leftmargin=*] 
\item From the theoretical side, extending the analysis to the setting of Neumann acoustic boundary conditions used here in numerics or Neuman-absorbing boundary conditions which minimize spurious reflections from the computational boundary is important, but technically more intricate. The latter mixed acoustic setting is particularly challenging as it prevents the use of high smoothness arguments in the pressure field that are invoked by the time-nonlocal damping.  Allowing for heterogeneous media to model healthy and non-healthy tissue is likewise of practical interest but poses a challenge for similar reasons, as  the theoretical framework employed here would not allow for low spatial regularity of acoustic parameters. 
Determining whether the local solution in Theorem \ref{thm: West Pennes concentration} can be extended to a  global one and examining the asymptotic behavior in time is of theoretical interest as well. Establishing global behavior of time-fractional evolution equations is a challenging task in general; see, e.g.,~\cite{vergaraOptimalDecayEstimates2015, kemppainen2016decay} for such results for certain single-physics problems.  Time-nonlocal acoustic damping in the Westervelt--Pennes-concentration system could turn out to be too weak for this purpose, however, and devising stabilization mechanisms might be needed.  
\item Numerically, the different time scales within the wave and heat equations imply having to solve the Westervelt equation a large number of times for each time iteration of the concentration equation.  Combined with the nonlocal term in the wave equation, this may lead to excessive memory consumption  in realistic scenarios.  Thus, developing advanced numerical schemes for handling fractional derivatives in order to reduce memory consumption is particularly challenging and of great practical interest.  Furthermore, advanced multi-rate time-stepping approaches might be considered as a way of tackling different time scales.  In addition, establishing rigorous \emph{a priori} estimates of the developed schemes is worth considering for future works.
Finally, a comparison of numerical results with experimental data is also among the computational challenges to be tackled in the future.
\end{enumerate}

\section*{Acknowledgments}
J.C. is supported by ANID through Fondecyt project 3230553.
\begin{appendices}
	\section{Linear pressure estimates} \label{Appendix: linear pressure estimates}
\noindent We include in this appendix the derivation of estimate \eqref{est pn} for the Galerkin approximations  used in the proof of Proposition~\ref{prop: linear pressure higher order}.	
\begin{proof}[Derivation of estimate \eqref{est pn}]
	In deriving \eqref{est pn}, the arguments are analogous to those of~\cite[Proposition 3.1]{kaltenbacher2024limiting} with modifications needed to incorporate having the variable coefficient $\frakn$ and to include time weights when estimating the arising $\frakm$ and $\frakn$ terms. For simplicity of notation, we omit the superscript $n$ below and use $p$ instead of $\pn$ everywhere. \\
	\indent By testing the semi-discrete problem with  $\Delta^2\pt$, we obtain
	\begin{equation} \label{identity_1}
		\begin{aligned} 
			(\frakm \ptt -\frakn \Delta p-  \frakK*\Delta \pt-\frakl_t \pt,\,  \Delta^2 \pt)_{L^2} = (\fp, \Delta^2 \pt)_{L^2}.
		\end{aligned}
	\end{equation}
	We can then rely on the following identity:
	\begin{equation}\label{Identity_m}
		\begin{aligned}
			&(\frakm \ptt, \Delta^2 \pt)_{L^2} \\
			=&\,  (\frakm\D \ptt, \Delta \pt)_{L^2}+(\ptt\, \D\frakm +2\nabla \ptt \cdot\nabla\frakm, \Delta \pt)_{L^2} \\
			=&\,  \frac12\ddt(	\frakm\D \pt, \Delta \pt)_{L^2}-\frac12(\frakm_t\D \pt, \Delta \pt)_{L^2}+(\ptt \, \D\frakm+2\nabla \ptt \cdot\nabla\frakm, \Delta \pt)_{L^2} 
		\end{aligned}
	\end{equation}
	because $\ptt \vert_{\partial \Omega} =\Delta \pt \vert_{\partial \Omega}=0$.  Similarly, we have
	\begin{equation}\label{n_estimate}
		\begin{aligned}
			(\frakn \Delta p, \Delta^2 \pt)_{L^2}
			=&\,  	( \nabla(\frakn \Delta p), \nabla \Delta \pt)_{L^2}\\
			=&\, ( \frakn \nabla \Delta p, \nabla \Delta \pt)_{L^2}+( \Delta p \nabla \frakn , \nabla \Delta \pt)_{L^2}\\
			=&\,
			\frac12\ddt\|	\sqrt{\frakn}\nabla\D p\|_{L^2}^2-\frac12(\frakn_t\nabla\D p, \nabla \Delta p)_{L^2}+( \Delta p \nabla \frakn , \nabla \Delta \pt)_{L^2}.			
		\end{aligned}
	\end{equation}
	Further, we can treat the arising $\frakl$ terms as follows:
		\begin{equation}\label{l_identity}
		\begin{aligned}
			(  \frakl \pt, \Delta^2 \pt )_{L^2}=&\,  (\D (\frakl \pt), \Delta \pt )_{L^2}\\
			=&\, ( \frakl\D \pt +2 \nabla \frakl \cdot \nabla \pt +  \pt \D \frakl, \Delta \pt )_{L^2}.
		\end{aligned}
	\end{equation}
	Thus, by integrating over $(0,t)$ for $t \in (0,T)$ \eqref{identity_1} and employing the coercivity assumption on $\frakK$ in \eqref{assumption coercivity kernel}, we arrive at the following energy inequality:
	\begin{equation} \label{energy_ineq_1}
		\begin{aligned}
			&\frac12\nLtwo{\sqrt{\frakm} \Delta \pt}^2 \Big \vert_0^t + \frac{1}{2} \nLtwo{\sqrt{\frakn}\nabla\D p}^2 \Big \vert_0^t +
			\CfrakK \int_0^{t} \|(\frakK*\nabla \D \pt )(s)\|^2_{\Ltwo} \ds \\
			\leq&\,\begin{multlined}[t] 
				\frac12 \int_0^t (\frakm_t\D \pt, \Delta \pt)_{L^2}\ds+ \intt(\ptt \, \D\frakm+2\nabla \ptt \cdot\nabla\frakm, \Delta \pt)_{L^2} \ds \\
				+ \intO \nabla \fp \cdot \nabla \D p \dx \Big \vert_0^t - \inttO \nabla \fpt \cdot \nabla \Delta p \dxs- \inttO \Delta p \nabla \frakn \cdot \nabla \Delta p \dxs \\
				+\frac12\intt (\frakn_t\nabla\D p, \nabla \Delta p)_{L^2} \ds+ \intt ( \frakl\D \pt +2 \nabla \frakl \cdot \nabla \pt +  \pt \D \frakl, \Delta \pt )_{L^2} \ds,
			\end{multlined}
		\end{aligned}
	\end{equation}
	where we have also used that $\fp|_{\partial \Om} = 0.$
	We can estimate the first integral on the right-hand side by relying on the embedding $\Htwo \hookrightarrow \Linf$ as follows:
	\begin{equation} \label{est_aaa_t}
		\begin{aligned}
			\frac12 \int_0^t (\frakm_t\D \pt, \Delta \pt)_{L^2}\ds
			\leq&\, \frac12 \intt \|\frakm_t\|_{\Linf}  \|\Delta \pt\|^2_{\Ltwo} \ds\\
			\leq&\, \frac12\intt \frac{1}{\sqrt{s}} \|\sqrt{s}\frakm_t\|_{\Linf}  \|\Delta \pt\|^2_{\Ltwo} \ds\\
		\end{aligned}
	\end{equation} 
	where the resulting term will be handled by Gr\"onwall's inequality later on. Note that here we have introduced $\|\sqrt{s}\frakmt\|_{\Linf}$ as $\frakm$ will in the fixed-point argument involve the temperature field which is handled using time weights; see~Proposition~\ref{prop: linear temperature higher order}. Similarly,
	\begin{equation}
		\begin{aligned}
			&\intt ( \frakl\D \pt +2 \nabla \frakl \cdot \nabla \pt +  \pt \D \frakl, \Delta \pt )_{L^2} \ds \\
			\lesssim&\,\begin{multlined}[t]  \|\frakl\|_{\LinfLinf} \|\Delta \pt\|^2_{\LtwotLtwo} + \|\nabla \frakl\|_{\LinfLthree} \|\nabla \pt\|_{L^2_t(\Lsix)} \|\Delta \pt\|_{\LtwotLtwo} \\+ \|\D \frakl\|_{\LinfLtwo} \|\pt\|_{L^2_t(\Linf)}\|\Delta \pt\|_{\LtwotLtwo}
			\end{multlined} \\
			\lesssim&\, \|\frakl\|_{\Xfrakl} \|\Delta \pt\|^2_{\LtwotLtwo}.
		\end{aligned}
	\end{equation}
	Further,
	\begin{equation}
		\begin{aligned}
			\frac12\intt (\frakn_t\nabla\D p, \nabla \Delta p)_{L^2} \ds \leq&\,\frac12 \intt \frac{1}{\sqrt{s}}\|\sqrt{s} \frakn_t\|_{\Linf} \|\nabla \Delta p\|^2_{\Ltwo}\ds. 
		\end{aligned}
	\end{equation}
	We can use H\"older's and Young's inequalities  $f$ terms in \eqref{energy_ineq_1}:
	\begin{equation} \label{estimates_f}
		\begin{aligned}
			&
			\intO \nabla \fp \cdot \nabla \D p \dx \Big \vert_0^t - \inttO \nabla f_t \cdot \nabla \Delta p \dxs\\
			\leq&\, \begin{multlined}[t]  \frac{1}{4 \eps}\|\nabla \fp(t)\|^2_{\Ltwo}+\eps \|\nabla \Delta p(t)\|_{\Ltwo}^2+\frac12 \|\nabla \fp(0)\|^2_{\Ltwo}+ \frac12\|\nabla \Delta p_0\|_{\Ltwo}^2 \\+ 	\|\nabla \fpt\|^2_{L^2(\Ltwo)}+  \|\nabla \Delta p\|_{L^2_t(\Ltwo)}^2, \end{multlined}
		\end{aligned}
	\end{equation}
	where the two resulting $\nabla \D p$ terms above can be either absorbed by the left-hand side of \eqref{energy_ineq_1} or handled by Gr\"onwall's inequality. Additionally, 
	\begin{equation}
		\begin{aligned}
			\inttO \Delta p \nabla \frakn \cdot \nabla \Delta p \dxs \lesssim&\, \|\nabla \frakn\|_{\LinfLthree}\|\Delta p\|_{L^2_t(\Lsix)} \|\nabla \Delta p\|_{\LtwotLtwo} \\
			\lesssim&\, \|\frakn\|_{\Xfrakn} \|\nabla \Delta p\|^2_{\LtwotLtwo},
		\end{aligned}
	\end{equation}
	which can be also tackled via \Gronwall's inequality. \\
	\indent	It remains to estimate the $\ptt$ terms on the right-hand side of \eqref{energy_ineq_1}. To this end, we first use H\"older's and Young's inequalities:
	\begin{equation} \label{est_utteps}
		\begin{aligned}
			&\intt(\ptt \, \D\frakm+2\nabla \ptt \cdot\nabla\frakm, \Delta \pt)_{L^2} \ds \\
			\leq&\,  \left(\|\ptt\|_{L^2(\Lfour)}\|\Delta \frakm\|_{L^\infty(\Lfour)} + \|\nabla \ptt\|_{L^2(\Ltwo)}\|\nabla \frakm\|_{L^\infty(\Linf)} \right)\|\Delta \pt\|_{L^2(\Ltwo)}.
		\end{aligned}
	\end{equation}
	By the embeddings $\Hone \hookrightarrow \Lfour$ and $\Htwo \hookrightarrow \Linf$, we further have
	\begin{equation}
		\begin{aligned}
			\|\Delta \frakm\|_{L^\infty(\Lfour)} +	\|\nabla \frakm\|_{L^\infty(\Linf)} \lesssim 	\|\Delta \frakm\|_{L^\infty(\Hone)} +	\|\nabla \frakm\|_{L^\infty(\Htwo)} \lesssim \|\frakm\|_{\Xfrakm}
		\end{aligned}
	\end{equation}
	and thus
	\begin{equation} \label{est_utt_interim}
		\begin{aligned}
			\intt(\ptt \, \D\frakm+2\nabla \ptt \cdot\nabla\frakm, \Delta \pt)_{L^2} \ds 
			\leq\,   \|\frakm\|_{\Xfrakm} \|\ptt\|_{L^2(\Hone)} \|\Delta \pt\|_{L^2(\Ltwo)}.
		\end{aligned}
	\end{equation}
	From here, we can use the (semi-discrete) PDE to further bound the $\ptt$ term. We first have by Poincar\'e's inequality
	\begin{equation}
		\begin{aligned}
			\|\ptt\|_{L^2(\Hone)}  \lesssim \| \nabla \ptt\|_{L^2(\Ltwo)}.
		\end{aligned}
	\end{equation}
	Then to estimate the right-hand side term, we use the identity, obtained after testing with $-\Delta \ptt$:
	\begin{equation} \label{id nabla ptt}
		\begin{aligned}
			(\frakm	\nabla \ptt+ \nabla \frakm \ptt, \nabla \ptt)_{L^2}
			=&\, \begin{multlined}[t]  (\frakK*\nabla \Delta \pt +\frakn\nabla \Delta p+ \nabla \frakn \Delta p + \nabla [\frakl \pt]+\nabla \fp, \nabla \ptt)_{L^2}.
			\end{multlined}
		\end{aligned}
	\end{equation}
	By testing the semi-discrete PDE with $\ptt$, we obtain
	\[
	\|\ptt\|^2_{\LtwoLtwo} \lesssim \| \frakK*\Delta \pt +\frakn\Delta p - \frakl \pt+ \fp\|^2_{L^2(\Ltwo)}.
	\]
	Then from \eqref{id nabla ptt}, we have
	\begin{equation} \label{ineq1}
		\begin{aligned}
			\|\nabla  \ptt\|^2_{L^2(\Ltwo)} 
			\lesssim&\,  \begin{multlined}[t] 
				\|\nabla \frakm\|^2_{L^\infty(\Linf)}\| \frakK*\Delta \pt +\frakn\Delta p - \frakl \pt+ \fp\|^2_{L^2(\Ltwo)}\\
				+ \| \frakK*\nabla \Delta \pt +\frakn\nabla \Delta p+\nabla \frakn \Delta p-  \nabla [\frakl \pt]+\nabla \fp\|^2_{L^2(\Ltwo)}.
			\end{multlined}
		\end{aligned}
	\end{equation}
Poincar\'e's inequality implies that
	\begin{equation}
		\| \frakK*\Delta \pt +\frakn\Delta p - \frakl \pt+ \fp\|_{L^2(\Ltwo)} \lesssim \|\nabla( \frakK*\Delta \pt +\frakn\Delta p - \frakl \pt+ \fp)\|_{L^2(\Ltwo)}.
	\end{equation}
	Further estimating the right-hand side terms in \eqref{ineq1} thus leads to
	\begin{equation}
		\begin{aligned}
			&\|\nabla  \ptt\|_{L^2(\Ltwo)} \\
			\lesssim&\,  \begin{multlined}[t] 
				(1+	\| \frakm\|_{\Xfrakm} )\Big ( \| \frakK*\nabla \Delta \pt\|_{L^2(\Ltwo)} + \|\frakn\|_{\Xfrakn}\| \nabla \Delta p\|_{L^2(\Ltwo)}+ 
				\| \frakl\|_{\Xfrakl} \|\pt\|_{L^2(\Lfour)}\\+ \|\frakn\|_{\Xfrakn}\|\Delta p\|_{\LtwotLtwo}
				+\|\nabla \fp\|_{L^2(\Ltwo)} \Bigr).
			\end{multlined}
		\end{aligned}
	\end{equation}
	Thanks to assumptions on $\frakK$, we have the  bound $\|\frakK\|_{\calM(0,t)} \leq C$.
	Going back to \eqref{est_utt_interim} and using the estimate on $\nabla \ptt$ thus yields
	\begin{equation} \label{est_utt_final}
		\begin{aligned}
			&\intt(\ptt \, \D\frakm+2\nabla \ptt \cdot\nabla\frakm, \Delta \pt)_{L^2} \ds \\
			\lesssim&\,  \begin{multlined}[t]   \|\frakm\|_{\Xfrakm}(1+\|\frakm\|_{\Xfrakm})(1+\|\frakn\|_{\Xfrakn}+\|\frakl\|_{\Xfrakl}) \Big\{ \| \nabla \Delta p\|_{L^2(\Ltwo)}+   \|\D \pt \|_{L^2(\Ltwo)}\\
				+ \| \frakK*\nabla \Delta \pt\|_{L^2(\Ltwo)} + \|\pt\|_{L^2(\Hone)}+\| \fp\|_{L^2(\Hone)}\Big\} \|\Delta \pt\|_{L^2(\Ltwo)}.
			\end{multlined}
		\end{aligned}
	\end{equation}
	Let $\calL(\frakm, \frakn, \frakl) =   \|\frakm\|_{\Xfrakm}(1+\|\frakm\|_{\Xfrakm})(1+\|\frakn\|_{\Xfrakn}+\|\frakl\|_{\Xfrakl})$.
	By employing estimates \eqref{est_aaa_t}, \eqref{estimates_f}, and \eqref{est_utt_final}  in \eqref{energy_ineq_1}, we arrive at
	\begin{equation} \label{energy_ineq_2}
		\begin{aligned}
			&\frac12\nLtwo{\sqrt{\frakm} \Delta \pt}^2 \Big \vert_0^t + \frac{1}{2} \nLtwo{\sqrt{\frakn}\nabla\D p}^2 \Big \vert_0^t +\CfrakK \int_0^{t} \|(\frakK*\nabla \D \pt )(s)\|^2_{\Ltwo} \ds \\
			\lesssim &\,\begin{multlined}[t] (1+\|\frakn\|_{\Xfrakn}+\calL^2(\frakm, \frakn, \frakl))\Big( \|\Delta \pt\|^2_{L^2_t(\Ltwo)} 
				\\+ \|\nabla \Delta p\|^2_{L^2_t(\Ltwo)}+   \|\D \pt \|^2_{L^2_t(\Ltwo)} + \|\pt\|^2_{L^2_t(\Hone)}\Bigr)+\|\nabla \Delta p_0\|^2_{\Ltwo} \\+\|\fp\|^2_{{H^1(\Hone)}} 
				+	 \calL(\frakm, \frakn, \frakl)\| \frakK*\nabla \Delta \pt\|_{L^2_t(\Ltwo)}  \|\Delta \pt\|_{L^2_t(\Ltwo)}\\
				+\intt \frac{1}{\sqrt{s}}( \|\sqrt{s}\frakm_t\|_{\Linf} +\|\sqrt{t}\frakn_t\|_{\Linf}) \|\Delta \pt\|^2_{\Ltwo} \ds.
			\end{multlined}
		\end{aligned}
	\end{equation} 
	Other than the last term in \eqref{energy_ineq_2}, all other terms on the right-hand side can be tackled using Gr\"onwall's inequality. To treat the last term, we employ Young's inequality:
	\begin{equation} \label{est_Keps_gamma}
		\begin{aligned} 
			&  \calL(\frakm, \frakn, \frakl)\|  \frakK*\nabla \Delta \pt\|_{L^2(\Ltwo)}  \|\Delta \pt\|_{L^2(\Ltwo)} \\
			\leq&\,  \gamma	\| \nabla \Delta \frakK* \pt\|^2_{L^2_t(\Ltwo)}+\frac{1}{4 \gamma } \calL^2(\frakm, \frakn, \frakl)\|\Delta \pt\|^2_{L^2_t(\Ltwo)} .
		\end{aligned}
	\end{equation}
	If $\tilde{C}>0$ is the hidden constant within $\lesssim$ in \eqref{energy_ineq_2}, we can choose $\gamma$ as
	\[
	\gamma = \frac{1}{\tilde{C}}\CfrakK /2.
	\]
	The term $\gamma	\| \nabla \Delta \frakK* \pt\|^2_{L^2_t(\Ltwo)}$ can be absorbed by the left-hand side of \eqref{energy_ineq_2}. 
	We then use Gr\"onwall's inequality for the second term on the right-hand side of \eqref{est_Keps_gamma} to arrive at the estimate. 
\end{proof}	
\section{Estimate of the right-hand side in the difference equation for the pressure} \label{Appendix: Estimate F}
We provide here the estimate of the right-hand side of the difference equation for the pressure in \eqref{Equation_West_diff}.
\begin{proof}[proof of Lemma~\ref{lemma: est F}]
	Recall that
		\begin{equation} \tag{\ref{rhs_contractivity}}
		\begin{aligned}
			F = \begin{multlined}[t] -(\frakmone-\frakmtwo) \ptwostartt 
				+(\fraknone-\frakntwo) \Delta \ptwostar   
				+(\fraklone-\frakltwo)\ptwostart.	
			\end{multlined}
		\end{aligned} 
	\end{equation} 
		By employing H\"older's inequality, we find that 
	\begin{equation}
		\begin{aligned}
			\|F\|_{\LoneLtwo}
			\lesssim &\,\sqrt{T} \|F\|_{\LtwoLtwo}\\
			\lesssim&\, \begin{multlined}[t] \sqrt{T} \Bigl(\|\frakmone-\frakmtwo\|_{\LinftLfour} \|\ptwostartt\|_{\LtwoLfour}+ \ \|\fraknone-\frakntwo\|_{\LtwotLfour} \|\Delta \ptwostar\|_{\LinfLfour} \\
				+ \|\fraklone-\frakltwo\|_{\LtwotLtwo}\|\ptwostart\|_{\LinfLinf} \Bigr). \end{multlined}
		\end{aligned}
	\end{equation}
	\noindent Since $(\ptwo, \thetatwo) \in \ball$, we know that $\|\ptwostar\|_{\Xp}\leq \Rp$ and thus from here  we further have 
	\begin{equation}
		\begin{aligned}
			\|F\|_{\LoneLtwo} 
			\lesssim&\, \begin{multlined}[t] \sqrt{T} \Rp \bigl(\|\frakmone-\frakmtwo\|_{\LinftHone} +  \|\fraknone-\frakntwo\|_{\LtwotHone}  \\
				+ \|\fraklone-\frakltwo\|_{\LtwotLtwo} \bigr). \end{multlined}
		\end{aligned}
	\end{equation}
We should thus bound the difference terms on the right-hand side further. To this end, we first note that the following identity holds:
	\begin{equation}
		\begin{aligned}
			\frakmone-\frakmtwo =&\, \frac{1}{\bonestar}\left(1-2 \konestar \ponestar \right)-\frac{1}{\btwostar}\left(1-2 \ktwostar \ptwostar \right) \\
			=&\,- \frac{2}{\bonestar} \left(\konestar \ponestar - \ktwostar \ptwostar \right) + \left(\frac{1}{\bonestar}-\frac{1}{\btwostar} \right) \left(1-2 \ktwostar \ptwostar \right)\\
			=&\,- \frac{2}{\bonestar} \left(\konestar \opstar +(\konestar - \ktwostar) \ptwostar \right) +\frac{\btwostar-\bonestar}{\bonestar \btwostar} \left(1-2 \ktwostar \ptwostar \right).
		\end{aligned}
	\end{equation}
	From here, using Lemmas~\ref{lemma: estimates bstar qstar} and~\ref{lemma: estimates kstar} and the fact that $\|\ptwostar\|_{\Xp} \leq \bRp$, we conclude that
	\begin{equation}
		\begin{aligned}
			\|	\frakmone-\frakmtwo \|_{\LtwotLtwo} \lesssim&\, \begin{multlined}[t]\|\opstar\|_{\LtwotLtwo}+ \|\konestar - \ktwostar\|_{\LtwotLtwo} \|\ptwostar\|_{\LinfLinf}
				\\
				+ \|\btwostar-\bonestar\|_{\LtwotLtwo}(1+ \|\ptwostar\|_{\LinfLinf})
			\end{multlined}\\
			\lesssim&\, \|\opstar\|_{\LtwotLtwo}+ \|\konestar - \ktwostar\|_{\LtwotLtwo} + \|\btwostar-\bonestar\|_{\LtwotLtwo}\\
			\lesssim&\, \|\opstar\|_{\LtwotLtwo}+ \|\othetastar\|_{\LtwotLtwo}.
		\end{aligned}
	\end{equation}
	Next, we can exploit the identity
	\begin{equation}  \label{id nabla diff frakm}
		\begin{aligned}
			&\nabla( \frakmone-\frakmtwo)\\
			=&\, \begin{multlined}[t]
				\frac{2}{(\bonestar)^2} (\bonestar)' \nabla \thetaonestar \left(\konestar \opstar +(\konestar - \ktwostar) \ptwostar \right) \\
				- \frac{2}{\bonestar} \left((\konestar)' \nabla \thetaonestar \opstar + \konestar \nabla \opstar +(\konestar - \ktwostar)' \nabla \othetastar \ptwostar+ (\konestar-\ktwostar) \nabla \ptwostar \right) 
				\\ +\nabla \frac{\btwostar-\bonestar}{\bonestar \btwostar} \left(1-2 \ktwostar \ptwostar \right) +\frac{\btwostar-\bonestar}{\bonestar \btwostar} \left(-2 (\ktwostar)'\nabla \thetatwostar \ptwostar+ \ktwostar \nabla \ptwostar \right)
			\end{multlined}
		\end{aligned}
	\end{equation}
	with
	\begin{equation}
		\begin{aligned} 
			&\nabla \frac{\btwostar-\bonestar}{\bonestar \btwostar}\\
			=&\, \frac{((\btwostar)'\nabla \thetatwostar-(\bonestar)'\nabla \thetaonestar)\bonestar \btwostar- (\btwostar-\bonestar)((\bonestar)' \nabla \thetaonestar \btwostar+\bonestar (\btwostar)' \nabla \thetatwostar)}{(\bonestar \btwostar)^2}.
		\end{aligned}
	\end{equation}
	From here, by additionally rewriting 
	\begin{equation}
		(\btwostar)'\nabla \thetatwostar-(\bonestar)'\nabla \thetaonestar=((\btwostar)'-(\bonestar)')\nabla \thetatwostar+(\nabla \thetatwostar-\nabla\thetaonestar)(\bonestar)'
	\end{equation}
	and, since $\|\thetaonestar\|_{\LinfHthree}$, $\|\thetatwostar\|_{\LinfHthree}  \leq \bRtheta$, we have
	\begin{equation}
		\begin{aligned}
		\left	\|	\nabla \frac{\btwostar-\bonestar}{\bonestar \btwostar} \right \|_{\LtwotLtwo}\lesssim&\, \|\othetastar\|_{\LtwotLtwo}+ \|\nabla \othetastar\|_{\LtwotLtwo}.
		\end{aligned}
	\end{equation}	
	By using this bound in \eqref{id nabla diff frakm} and the fact that $\|\ponestar\|_{\LinfHthree}$, $\|\ptwostar\|_{\LinfHthree} \leq \bRp$, we obtain
	\begin{equation}
		\begin{aligned}
			\|\nabla(\frakmone-\frakmtwo)\|_{\LtwotLtwo} \lesssim \|\opstar\|_{\LtwotHone} + \|\othetastar\|_{\LtwotHone}.
		\end{aligned}
	\end{equation}
	To estimate the difference of $\frakn$ terms, we first observe that the following identity holds:
	\begin{equation}
		\begin{aligned}
			\fraknone-\frakntwo = \frac{\qonestar}{\bonestar}-\frac{\qtwostar}{\btwostar} =&\, \frac{\qonestar \btwostar-\qtwostar \bonestar}{\bonestar \btwostar} \\
			=&\, \frac{(\qonestar-\qtwostar) \bonestar-\qonestar( \bonestar-\btwostar)}{\bonestar \btwostar}.
		\end{aligned}
	\end{equation}
	From here, we have
	\begin{equation}
		\begin{aligned}
			\|	\fraknone-\frakntwo\|_{\LtwotLtwo} \lesssim \|\bonestar-\btwostar\|_{\LtwotLtwo}+\|\qonestar-\qtwostar\|_{\LtwotLtwo} \lesssim \|\othetastar\|_{\LtwotLtwo}.
		\end{aligned}
	\end{equation}
	Further, the difference of gradients can be rewritten as follows:
	\begin{equation}
		\begin{aligned}
			&\nabla (\fraknone-\frakntwo)\\
			=&\,\begin{multlined}[t] \frac{1}{(\bonestar \btwostar)^2} \Bigl\{ [(\qonestar-\qtwostar) \bonestar-\qonestar( \bonestar-\btwostar)]'\bonestar \btwostar\\-[(\qonestar-\qtwostar) \bonestar-\qonestar( \bonestar-\btwostar)]((\bonestar)'\nabla \thetaonestar \btwostar+\bonestar (\btwostar)'\nabla \thetatwostar)\Bigr\},
			\end{multlined}
		\end{aligned}
	\end{equation}
	where
	\begin{equation}
		\begin{aligned}
			&[(\qonestar-\qtwostar) \bonestar-\qonestar( \bonestar-\btwostar)]' \\
			=&\, \begin{multlined}[t]
				(	(\qonestar)' \nabla \thetaonestar-(\qtwostar)'\nabla \thetatwostar) \bonestar+ (\qonestar-\qtwostar)(\bonestar)'\nabla \thetaonestar\\-(\qonestar)'\nabla \thetaonestar( \bonestar-\btwostar) -\qonestar \left((\bonestar)'\nabla \thetaonestar- (\btwostar)'\nabla \thetatwostar\right).
			\end{multlined}
		\end{aligned}
	\end{equation}
	Thus, similarly to before by relying on the uniform boundedness of $\thetaonestar$ and $\thetatwostar$, we obtain
	\begin{equation}
		\|\nabla (\fraknone-\frakntwo)\|_{\LtwotLtwo} \lesssim \|\otheta\|_{\LtwotHone}.
	\end{equation}
	To estimate the difference of $\frakl$ terms, we note that
	\begin{equation}
		\begin{aligned}
			\fraklone-\frakltwo  =&\, 2\frac{\konestar}{\bonestar}\ponestart-2\frac{\ktwostar}{\btwostar}\ptwostart 
			=\, 2 \frac{\konestar}{\bonestar} \opstart + 2 \frac{\konestar \btwostar-\ktwostar \bonestar}{\bonestar \btwostar} \ptwostart \\
			=&\, 2 \frac{\konestar}{\bonestar} \opstart + 2 \frac{(\konestar-\ktwostar) \btwostar-\ktwostar (\bonestar-\btwostar)}{\bonestar \btwostar} \ptwostart. 
		\end{aligned}
	\end{equation}
	From here using a similar reasoning to before we find that
	\begin{equation}
		\begin{aligned}
			\|\fraknone-\frakntwo\|_{\LtwotLtwo} \lesssim \|\opstart\|_{\LtwotLtwo}+\|\othetastar\|_{\LtwotLtwo}.
		\end{aligned}
	\end{equation}
	By putting the estimates together, we conclude that \eqref{est F} holds.
\end{proof}
\end{appendices}
\bibliography{references_new}{}
\bibliographystyle{siam} 
			\end{document}